\newtheorem{thm}{Theorem}[section]
\newtheorem{lemma}[thm]{Lemma}
\newtheorem{prop}[thm]{Proposition}
\newtheorem{defn}[thm]{Definition}
\newtheorem{rmk}[thm]{Remark}
\newcommand{\set}[1]{\left\{#1\right\}}
\newcommand{\abs}[1]{\left|#1\right|}
\newcommand{\mR}{\mathbb{R}}
\newcommand{\XX}{\mathcal{X}}
\newcommand{\dist}{\operatorname{dist}}
\title{Quantitative recurrence statistics and convergence to an extreme 
value distribution for non-uniformly hyperbolic dynamical systems}
\author{M.P. Holland, P. Rabassa, A.E. Sterk}
\date{September 2015}
\begin{document}

\maketitle

\begin{abstract}
For non-uniformly hyperbolic dynamical systems we consider the time 
series of maxima along typical orbits. Using ideas based upon 
quantitative recurrence time statistics we prove convergence
of the maxima (under suitable normalization) to an extreme value distribution, 
and obtain estimates on the rate of convergence. We show that our results are applicable to a range
of examples, and include new results for Lorenz maps, certain partially hyperbolic 
systems, and non-uniformly expanding systems with sub-exponential decay of correlations. 
For applications where analytic results are not readily available
we show how to estimate the rate of convergence to an extreme 
value distribution based upon numerical information of the quantitative recurrence statistics.
We envisage that such information will lead 
to more efficient statistical parameter estimation schemes based upon the block-maxima method.

\end{abstract}

\setcounter{tocdepth}{2}
\tableofcontents

\section{Introduction and background}\label{sec.intro-and-background}


\subsection{Extremes in dynamical systems}

Consider a dynamical system $(f,\XX, \nu)$, where $\XX$ is a $d$-dimensional
Riemannian manifold, $f:\XX\rightarrow \XX$ a measurable transformation, and
$\nu$ is an $f$-invariant probability measure. Assume that there is a compact
invariant set $\mathcal{A}\subseteq\XX$ which supports the measure $\nu$.
We let $B(x,r)=\{y:\dist(x,y)\leq r\}$ denote a closed ball in $\XX$ with respect
to the Riemannian metric $\dist(\cdot,\cdot)$. Given an observable
$\phi:\XX \rightarrow \mR$ we consider the stationary stochastic process
$X_1, X_2, \dots$ defined as
\begin{equation}\label{eq:phi_o_f} 
X_i =\phi \circ f^{i-1}, \quad i \geq 1,  
\end{equation}
and its associated maximum process $M_n$ defined as
\begin{equation}
\label{eq:max-process} 
M_n = \max(X_1,\dots,X_n). 
\end{equation}
Almost surely, $M_n\to\max_{x\in\mathcal{A}}\phi(x)$, and hence we are interested in the existence of
sequences $a_n,b_n\in\mathbb{R}$ such that
\begin{equation}\label{eq:extreme-conv1}
\nu\left\{x\in\XX: a_n(M_n-b_n) \leq u \right\}\to G(u),
\end{equation}
for some non-degenerate $G(u)$. The sequences $u_n:=u/a_n+b_n$ are chosen so that
\begin{equation}\label{eq:extreme-conv2}
\lim_{n\to\infty} n\nu\{x\in\XX: \phi(x)>u_n\}\to\tau(u),
\end{equation}
for some non-degenerate function $\tau(u)$. For the stochastic process defined
in \eqref{eq:phi_o_f}, our aim is to recover the same functions $G(u)$ as
computed in the case of independent identically distributed (i.i.d.)\ random
variables. In cases where this convergence holds, we also establish asymptotic bounds on 
the rate of convergence to $G(u)$ as $n\to\infty$. The case of i.i.d.\ random variables
has been widely studied, see \cite{Galambos, Leadbetter, Resnick}, and if the
limit function $G(u)$ is a non-degenerate distribution function then the limit
can only be of three following types:
\begin{description}
\item[Type I (Gumbel):]
\[
G(u) = \exp\left(- \exp \left[-\frac{u-b}{a} \right]\right), \quad -\infty < u < \infty;
\]

\item[Type II (Fr\'echet):]
\[
G(u) = \left\{ \begin{array}{ll} 
0, & u \leq b, \\ 
\displaystyle\exp\left(- \left[\frac{u-b}{a} \right]^{-\alpha}\right), &  u > b; 
\end{array} \right.
\]

\item[Type III (Weibull):]
\[
G(u) = \left\{ \begin{array}{ll} 
\displaystyle\exp\left(- \left[-\frac{u-b}{a} \right]^{\alpha}\right), &  u < b,\\ 
1, & u \geq b; 
\end{array} \right.
\]
\end{description}
for some parameters $a>0$, $b$ and $\alpha>0$. The
functional form of $G(u)$ in fact depends on $\tau(u)$, see \cite{Leadbetter}.
For example, in the case of i.i.d.\ random variables defined by the unit
exponential probability distribution $P$, we have  that $\tau(u)=e^{-u}$, and
$P(M_n\leq u+\log n)\to \exp(-e^{-u})$. Type II/III distributions arise in
the case where $\tau(u)$ has power law behaviour. Given a cumulative probability
distribution $G$, we say that $G$ follows an \emph{Extreme Value Distribution}
(EVD) if $G$ is any of the three distributions above. 

For weakly dependent stochastic processes satisfying equation \eqref{eq:extreme-conv2}, it was shown in 
\cite{Leadbetter} that convergence to a EVD is still valid (with the same 
distribution type as in the i.i.d.\ case) provided two probabilistic 
conditions $D(u_n)$ and  $D'(u_n)$ are shown to hold. 
In the dynamical systems setting,
much work (as we discuss below) has been devoted to finding conditions analogous to $D(u_n)$ and $D'(u_n)$ 
that ensure that \eqref{eq:max-process} converges to a EVD, and then checking that these 
conditions hold for a given system.


\subsection{Quantitative recurrence time statistics}

The aim of this article is to provide a general approach and formulate a collection of (checkable) 
conditions that a dynamical system must satisfy in order to ensure convergence to an EVD,
together with an estimate on the rate of convergence. The conditions we develop will be phrased in 
terms of i) the regularity of the observable; ii) the regularity of the invariant measure; 
iii) the rate of mixing of the dynamical system, and iv) 
quantitative asymptotics on the Poincar\'e recurrence time statistics.

We consider the class of observables that can be written in functional form
$\phi(x)=\psi(\dist(x,\tilde{x}))$ for some measurable function
$\psi:[0,\infty)\to\mathbb{R}$ taking its maximum at $0$ (hence $\phi$ is
maximized at $\tilde{x}$). When we speak of convergence to EVD, we will be
interested in the convergence for $\nu$-typical points $\tilde{x}\in\XX$ for which 
$\phi$ achieves its maximum. The conditions we develop will be applicable to a wide range of 
dynamical systems, including non-uniformly hyperbolic systems modelled by Young towers 
\cite{Young1,Young2}. This article will take forward and develop further the approaches 
used in (for example) \cite{Collet,Gupta,GHN,HNT,holland-nicol,Licheng}. In particular, a new
development that we discuss is on the rate of convergence to EVD for systems where there is a 
weak control on their quantitative recurrence statistics. 
As an application we establish a convergence rate to EVD for one and two-dimensional Lorenz maps, 
and formulate a conjecture on the convergence rate to EVD for the Lorenz flow, see \cite{GW,Lorenz63}. 
A second development is to optimize the approach used 
in \cite{Collet} and prove convergence to EVD (with a convergence rate) for non-uniformly 
expanding systems modelled by Young towers with stretched exponential decay of correlations. We 
also study rates of convergence to EVD for certain partially 
hyperbolic systems such those as considered in \cite{Gupta}. In particular we prove convergence to EVD for the
Alves-Viana map \cite{AV} together with an estimate on the convergence rate. 
As a further development, we also use quantitative recurrence statistics to achieve
numerical bounds on the rate of convergence to EVD when analytic information is not
readily available. For most of the hyperbolic systems that we study, we find that the (numerical) rate 
of convergence to EVD is power law provided our system observables are sufficiently 
regular. However, we will highlight cases where slow convergence to EVD is possible. We also
compare our methods to the approaches considered in \cite{CC,Haydn-Wasilewska,HNPV},
and study quantitative recurrence in situations where we don't expect a standard extreme 
law to hold, e.g. for quasi-periodic systems, \cite{Coelho, Coelho-F}.

To study extreme statistics via quantitative Poincar\'e recurrence statistics 
we consider a family of \emph{recurrence sets} $E_n$ defined as follows.  
Let $\tilde{g}:\mathbb{N}\to\mR$ be a
monotonically increasing function and for $n\ge 1$, let
\begin{equation}\label{eq:En}
    E_n:=\left\{ x\in\XX: \dist(x,f^jx)\le n^{-\frac{1}{d}},\,\textrm{for some}\;
j\in[1,\tilde{g}(n)]\right\},
    \end{equation}
where $d$ is the Euclidean dimension of the space. For some $\gamma<1$ we write $E_n:=E_n(\gamma)$
in the specific case $\tilde{g}(n)=n^{\gamma}$. 
The set $E_n$ captures the set of points
$x\in\XX$ which are approximately periodic (within distance $n^{-1/d}$) up to some time scale $\tilde{g}(n)$.
We also consider the set $E_n$ over other asymptotic time scales $\tilde{g}(n)$. Our specific interest is the study of the asymptotics of $\nu(E_n(\gamma))$ as $n\to\infty$. For hyperbolic dynamical systems we conjecture the following: there exists a
$\gamma_0\in(0,1)$ such that for all $\gamma<\gamma_0$,
\begin{equation}\label{eq:En-law}
\liminf_{n\to\infty}\frac{\log(\nu(E_n(\gamma))^{-1})}{\log
n}>0
\quad\mbox{and}\quad
\limsup_{n\to\infty}
\frac{\log(\nu(E_n(\gamma))^{-1})}{\log n}<\infty.
\end{equation}
i.e. $\nu(E_n(\gamma))\to 0$ as a power law. For certain non-uniformly expanding dynamical systems
we show that this condition holds. However, for general hyperbolic systems this estimate is difficult to prove analytically. Instead it is usually shown to hold for some shorter time scale $\tilde{g}(n)=(\log n)^{\gamma}$ with 
$\gamma>1$. Moreover, sometimes a weaker (sub-power law) asymptotic on $\nu(E_n)$ is achieved.  
From our main results, we will see that the main contribution to bounding the rate of convergence 
to an EVD comes from the asymptotic decay of $\nu(E_n(\gamma))$. The other main contribution 
to the bound comes from the correlation decay of the system over the time scale 
$\tilde{g}(n)=n^{\gamma}$, and hence we try to choose $\tilde{g}(n)$ tending to infinity as fast as possible.
In particular to achieve the best convergence rate we typically seek the largest possible 
$\gamma$ such that $\nu(E_n(\gamma))\to 0$ as a power law. We remark that we would not expect
$\nu(E_n)\to 0$ faster than a power law (unless the measure $\nu$ is highly irregular at periodic
points).

Let us be precise on how we characterize convergence to EVD. Define functions $\tau_n(u)$ and $G_n(u)$ by
\begin{equation}\label{eq.tauG}
\tau_n(u)=n\nu\{\phi(x)\geq u_n\},\quad G_{n^{a}}(u)=\left(1-\frac{\tau_n(u)}{n^{a}}\right)^{n^{a}},
\end{equation}
where $u_n:=u_n(u)$ is a sequence with the property that 
$\tau_n(u)$ converges to some $\tau(u)$ uniformly for all $u$ lying in a compact subset of $\mathbb{R}.$
For most of our applications this convergence property of $\tau_n(u)$ can be shown to hold for certain
linear sequences $u_n$ prescribed as in equation \eqref{eq:extreme-conv2}.
The parameter $a$ will be chosen as a fixed value in $(0,1)$. We consider the following two terms:
\begin{equation}\label{eq.error.b1b2}
\mathcal{B}_1(n):=\left|\nu\{M_n\leq u_n\}-G_{n^{a}}(u)\right|,\quad
\mathcal{B}_2(n):=\left|G_{n^{a}}(u)-G(u)\right|.
\end{equation}  
It follows that 
$$\left|\nu\{M_n\leq u_n\}-G(u)\right|\leq \mathcal{B}_1(n)+\mathcal{B}_2(n).$$
In this article we focus on finding a bound for the term $\mathcal{B}_1(n)$, and our main theorems will be based around
this quantity. We show that the main contribution to bounding $\mathcal{B}_1(n)$ comes from the speed at which $\nu(E_n)$ decays to zero. The rate of decay of correlations will also play a role in bounding $\mathcal{B}_1(n)$. For the term 
$\mathcal{B}_2(n)$, this will always go to zero if we assume that $\tau_n(u)$ converges to $\tau(u)$. 
From the limit definition of the exponential function the corresponding limit for $G(u)$ is then $e^{-\tau(u)}$. The rate at which it goes to zero certainly depends on the speed of convergence of $\tau_n(u)$ to $\tau(u)$. This latter convergence is not directly influenced by the recurrence statistics
nor on the rate of mixing. However, it does depend on the choice of the parameter $a\in(0,1)$, the sequence $u_n$, the regularity of the observable $\phi=\psi(\dist(x,\tilde{x}))$, and the regularity of the invariant measure (density) in the vicinity of the point $\tilde{x}\in\XX$. For i.i.d random variables we can take $a=1$, but 
for dependent processes the optimal value of $a$ tends to lie strictly inside $(0,1)$. We remark that even 
for i.i.d random variables the best bound for $\mathcal{B}_2(n)$ can be of the order $1/(\log n)$. 
This is true for Gaussian random variables, see \cite{Hall}. 

In the statement of our results, we focus on observables that are maximized at generic points $\tilde{x}\in\XX$. 
The exceptional set of points $\tilde{x}\in\XX$ where we cannot ensure convergence to an EVD (by our methods) has zero measure. However, the exceptional set is non-empty, and moreover contains all periodic points. For observables maximized at periodic points there has been study of their associated extreme statistics, see for example \cite{FFT3,FFT4}. 
We also focus on observables that can be expressed as smooth (or regularly varying) functions of the Euclidean metric $\dist(x,\tilde{x})$. For more general observables such as those considered in \cite{HVRSB}, an extended approach beyond this work is required in order to establish convergence to an EVD (with corresponding rates). 
The recurrence sets $E_n$ are naturally defined in terms of the level set geometries of $\phi(x)$ (i.e. balls). 
For more general observables, the definition of $E_n$ would need to be adapted to the geometry of the level set 
$\{\phi(x)=c\}$, for any $c\in\mathbb{R}$. Corresponding estimates on $\nu(E_n)$ would also need to be derived.

Relevant to this article we mention parallel approaches in studying extremes via return time statistics. 
For example, in \cite{FFT1} they show a direct correspondence between extreme value laws and return time 
distributions. This is extended in \cite{FFT2} to consider systems with non-smooth measures and/or 
observations. In \cite{FHN} they use quantitative recurrence statistics to show convergence to Poisson laws
for certain hyperbolic systems (e.g. billiards). For the systems we consider in this article we expect 
similar results to hold.

For results on numerical studies of convergence to an EVD see \cite{Alokley, Faranda1, Faranda2}. In these references 
they consider the performance of the numerical block-maxima approach as applied to time series data generated 
from certain chaotic dynamical systems. The block-maxima method is used as an intermediate step
to estimate the distribution parameters associated to the limiting EVD. The theoretical approach we use 
involves a blocking argument method, and in particular we gain an error estimate in terms of the block sizes 
and number blocks. With further work, we might expect to improve the convergence of the block maxima method 
by optimizing the choice block size and block length if specific knowledge of the time series is available 
(e.g. such as that of decay of correlations and/or the quantitative recurrence statistics).

This paper is organised as follows. In Section \ref{sec.statement.results} we state the main dynamical assumptions
and quantitative recurrence conditions. We then state our main results on convergence to EVD 
with corresponding convergence rates for a range of systems that include: non-uniformly expanding dynamical
systems (especially those under weak recurrence assumptions); non-uniformly expanding systems with 
sub-exponential decay of correlations; partially hyperbolic systems, and then non-uniformly hyperbolic systems.
In Section \ref{sec.general} we outline the main blocking argument approach and detail (via three key
propositions) how convergence to an EVD follows from the main dynamical assumptions such as the recurrence 
and mixing assumptions, and assumptions
on the regularity of the invariant measure. We then give the proofs of the main theorems. In Section 
\ref{sec.furtherdiscussion}
we give the link between our definition of $E_n$, and alternative definitions of quantitative recurrence that imply 
convergence to Poisson-like limit laws. We also discuss quantitative recurrence and corresponding convergence to EVD 
for continuous time flows, and discuss quantitative recurrence estimates for quasi-periodic systems.
In Section \ref{sec.numerics} we consider a range of case studies, including systems where analytic information on the
decay of $\nu(E_n)$ is not known. We show that typically $\nu(E_n)\to 0$ as a power law, and hence 
we might expect fast (power law) convergence to an EVD if we optimize the blocking method approach. 
Finally in Section \ref{sec.proofs} we prove the main technical results, such as the propositions stated
in Section \ref{sec.general}. 


\section{Statement of the main results}\label{sec.statement.results}
We consider first non-uniformly expanding dynamical systems, and the main assumptions here concern the existence 
of an ergodic invariant measure $\nu$ absolutely continuous with respect to Lebesgue measure, and
estimates on the quantitative recurrence and decay of correlations. We then
consider systems with sub-exponential decay of correlations and prove directly the existence of an extreme value law
(without making assumptions on the quantitative recurrence statistics). We then consider extreme statistics for
partially hyperbolic systems. Finally we consider extreme statistics for general non-uniformly hyperbolic systems. 

Throughout we fix the following notations. For positive functions $f(x)$ and $g(x)$,
we write $f(x)\sim g(x)$ if $f(x)/g(x)\to 1$ as $x\to\infty$. We say
$f(x)\approx g(x)$ if there exist $C_1,C_2$ such that $C_1\leq f(x)/g(x)\leq C_2$, and
$f(x)=O(g(x))$ if there is a constant $C>0$ such that $f(x)\leq Cg(x)$,
and $f(x)=o(g(x))$ if $f(x)/g(x)\to 0$. Similar statements apply for $x\to 0$, and in 
the case for functions replaced by sequences. 
\subsection{Convergence to an EVD for non-uniformly expanding systems}\label{sec.nue}
In this section we suppose that $(f,\XX,\nu)$ is a \emph{non-uniformly expanding system}. In particular 
let $\XX$ be a $d$-dimensional Riemannian manifold, and suppose that the
measure $\nu$ is absolutely continuous with respect to volume $m$. We assume that
$\nu$ admits a density $\rho\in L^{1+\delta}(m)$ for some $\delta>0$. In this
case the unstable dimension $\nu$-almost everywhere is equal to $d$, and
for all vectors $v\in T_x\XX$ we have
\[
\lim_{n\to\infty}\frac{1}{n}\log\|Df^n(x)v\|\geq\lambda_0>0
\quad
\textrm{for $\nu$-a.e. $x\in\XX$}.
\]
Under the assumption $\nu(E_n)\leq O(n^{-\alpha})$ convergence to an EVD was proved in \cite{HNT} with corresponding convergence rates established in \cite{holland-nicol}. Here we shall derive corresponding convergence results when bounds 
on the asymptotics of $\nu(E_n)$ are sub-polynomial. A natural application includes the study of 
one-dimensional Lorenz maps \cite{GHN}.  

We make the following dynamical assumptions. Recall that the function $\tilde{g}(n)$ and the
sets $E_n$ are defined in equation (\ref{eq:En}).  
\begin{enumerate}
\item[(H1)]{\bf (Decay of correlations).}
There exists a monotonically decreasing sequence $\Theta(j)\to 0$ such that
for all $\varphi_1$ Lipschitz continuous and all $\varphi_2 \in L^{\infty}$:
\[
\left|\int \varphi_1 \cdot \varphi_2\circ f^j d \nu -\int \varphi_1 d \nu \int \varphi_2 d\nu\right|
\leq
\Theta(j) \|\varphi_1\|_{\textrm{Lip}} \|\varphi_2\|_{L^\infty},
\]
where $\|\cdot\|_{\textrm{Lip}}$ denotes the Lipschitz norm. (For non-uniformly expanding maps,
this will be our decay assumption).
\item[(H2a)]{\bf (Strong quantitative recurrence rates).}
There exist numbers $\gamma, \alpha>0$ and $C>0$ such that:
\begin{equation}\label{eq:h2a-rec}
\tilde{g}(n)\sim n^{\gamma}
\quad\implies\quad
\nu(E_n)\leq\frac{C}{n^{\alpha}}.
\end{equation}
\item[(H2b)]{\bf (Weak quantitative recurrence rates).}
There exist numbers $\gamma>1$, $\alpha>0$ and $C>0$ such that:
\begin{equation}\label{eq:h2b-rec}
\tilde{g}(n)\sim (\log n)^{\gamma}
\quad\implies\quad
\nu(E_n)\leq\frac{C}{(\log n)^{\alpha}}.
\end{equation}
\end{enumerate}
We remark on these conditions as follows. To prove convergence to an EVD
for systems having polynomial decay of correlations
we require the condition in \eqref{eq:h2a-rec} to hold. For systems
having exponential decay of correlations we can prove convergence
to an EVD under milder assumptions on the recurrence conditions, and in particular
we can assume the weaker version \eqref{eq:h2b-rec}. In particular the choice of function $\tilde{g}(n)$ plays
an important role via a control of the asymptotics of $\Theta(\tilde{g}(n))$. As well as conditions
(H2a) or (H2b), the rate of convergence to an EVD is also linked to how fast $\Theta(\tilde{g}(n))\to 0$,
see Section \ref{sec.general}.
Hence it is possible to formulate weak recurrence in terms of other asymptotic forms
for $\tilde{g}(n)$ and $\nu(E_n)$, but from a point of view of applicability
we will not optimize beyond stating (H2b). 
Intermediate versions include having $\tilde{g}(n)\sim (\log n)^{\gamma}$ 
imply $\nu(E_n)\leq Cn^{-\alpha}$ (see \cite{Collet, holland-nicol}), and 
also the following which we state formally as (H2c): 
\begin{enumerate}
\item[(H2c)]{\bf (Intermediate quantitative recurrence rates).}
For some $\gamma>1$, $\alpha\in(0,1)$ and $C>0$:
\begin{equation}\label{eq:h2c-rec}
\tilde{g}(n)\sim (\log n)^{\gamma}
\quad\implies \nu(E_n)\leq C\exp\{-(\log n)^{\alpha}\}.
\end{equation}
\end{enumerate}
Condition (H2c) has been verified for Lorenz maps, see \cite{GHN,Licheng}.
In Section \ref{sec.proofstretched} we prove that condition (H2c) holds for non-uniformly expanding
systems that have stretched exponential decay of correlations. An
application of this includes extremes for certain partially hyperbolic
systems such as the Alves-Viana map \cite{AV}, as discussed in
in \cite{Gupta}.

The first result we state concerns convergence to an EVD for observables 
$\phi:\XX\to\mathbb{R}$ that take their maxima at some specified $\tilde{x}\in\XX$. In particular the result
below generalizes that of \cite{HNT,holland-nicol} in the case of systems 
satisfying condition (H2b) or (H2c). Unless stated otherwise we consider linear sequences $u_n(u)$, and 
relative to equation \eqref{eq.tauG} we take
\begin{equation}\label{eq.tauG.linear}
\tau_n(u)=n\nu\left\{\phi(x)\geq \frac{u}{a_n}+b_n\right\},\quad G_{\sqrt{n}}(u)=\left(1-\frac{\tau_n(u)}{\sqrt{n}}
\right)^{\sqrt{n}}.
\end{equation}
The choice $a=1/2$ in the form of $G_{n^a}(u)$ turns out to be optimal in most situations we consider. 
We shall consider observable functions of the form 
$\phi(x)=\psi(\dist(x,\tilde{x}))$, where $\psi:\mathbb{R}^+\to\mathbb{R}$ 
takes its maximum at zero. We will assume that $\psi$ is monotonically decreasing and is regularly varying at zero. To keep the exposition simple, we will take the explicit case $\psi(y)=-\log y$. We have the following result

\begin{thm}\label{thm.extreme}
Suppose that $f: \XX \to \XX$ is a non-uniformly expanding map with ergodic
measure $\nu$ having density $\rho\in L^{1+\delta}(m)$ for some $\delta>0$. 
Consider the observable function $\phi(x)=-\log(\dist(x,\tilde{x}))$, and suppose in equation
\eqref{eq.tauG.linear} we take sequences $a_n=d$, $b_n=d^{-1}\log n$. Then we have
the following:
\begin{enumerate}
\item 
Suppose there exists $\theta_0<1$ and $\beta\in(0,1]$ 
such that $\Theta(n)=O(\theta^{n^{\beta}}_0)$ 
and (H1) holds together with (H2b). Assume further that $\gamma\beta>1$.
Then for $\nu$-a.e. 
$\tilde{x}\in \XX$ we have
\begin{equation}\label{eq.thm1.h2b}
\left| \nu\{M_n\leq u_n\}- G_{\sqrt{n}}(u)\right|\leq 
\frac{C_1}{(\log n)^{\alpha-\kappa}}
,\quad\textrm{for any}\;
\kappa>\frac{C_2}{\beta},
\end{equation}
where $C_1>0$ depends on $\tilde{x}$ and $C_2>0$ depends $\delta$. 
\item 
Suppose there exists $\theta_0<1$ and $\beta\in(0,1]$ 
such that $\Theta(n)=O(\theta^{n^{\beta}}_0)$ 
and (H1) holds together with (H2c). Assume further that $\gamma\beta>1$. Then there exists $\tilde{\alpha}>0$
such that for $\nu$-a.e. $\tilde{x}\in \XX$ we have
\begin{equation}\label{eq.thm1.h2c}
\left| \nu\{M_n\leq u_n\}- G_{\sqrt{n}}(u)\right|\leq 
C_1\exp\{-(\log n)^{\tilde\alpha}\},
\end{equation}
where $C_1$ depends on $\tilde{x}$.
\end{enumerate}
Furthermore, for $\nu$-a.e. $x\in\XX$, there exists $C(\tilde{x})>0$
such that
\begin{equation}\label{eq.Mn.gumbel}
\lim_{n\to\infty}\nu\{M_n\leq u_n\}=\exp\{-C(\tilde{x})e^{-u}\}.
\end{equation}
\end{thm}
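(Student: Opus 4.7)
The plan is to apply the three-proposition blocking framework outlined in Section~\ref{sec.general} with the scaling $a=1/2$, so that we partition $[1,n]$ into $q_n=\sqrt{n}$ blocks of length $p_n=\sqrt{n}$, separated by small gaps of length $t_n=\tilde{g}(n)$. The triangle inequality
\[
\left|\nu\{M_n\leq u_n\}-G_{\sqrt{n}}(u)\right|
\;\leq\;
\mathcal{E}_{\mathrm{mix}}(n)+\mathcal{E}_{\mathrm{rec}}(n)+\mathcal{E}_{\mathrm{dens}}(n),
\]
then splits $\mathcal{B}_1(n)$ into a mixing error coming from telescoping the joint probability over blocks into a product (handled via (H1)), a short-return/recurrence error coming from estimating the block probability by $1-p_n\,\nu\{\phi>u_n\}$ (handled via (H2b) or (H2c)), and a density-regularity error coming from identifying $p_n\nu\{\phi>u_n\}$ with $\tau_n(u)/\sqrt{n}$ up to the desired precision, where the $L^{1+\delta}$ bound on $\rho$ enters.

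First I would make the observable explicit: since $\phi(x)=-\log\dist(x,\tilde{x})$, the level set $\{\phi>v\}$ is the ball $B(\tilde{x},e^{-v})$, and the choice $u_n=u/d+d^{-1}\log n$ gives $e^{-u_n}=e^{-u/d}n^{-1/d}$. By the Lebesgue differentiation theorem applied to $\rho\in L^{1+\delta}(m)$, for $\nu$-a.e.\ $\tilde{x}$ one has $\nu(B(\tilde{x},r))\sim \rho(\tilde{x})\,\omega_d\,r^d$ as $r\to0$, so
\[
\tau_n(u)=n\,\nu(B(\tilde{x},e^{-u_n}))\;\longrightarrow\;\rho(\tilde{x})\,\omega_d\,e^{-u}
\;=:\;C(\tilde{x})e^{-u}.
\]
Standard moment estimates for $L^{1+\delta}$ densities (e.g.\ a Borel--Cantelli argument on shrinking annuli) provide a quantitative rate for this density limit on a full-measure set, and this is precisely where the loss $\kappa>C_2/\beta$ and the constant $C_2$ depending on $\delta$ are absorbed.

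Next I would apply the telescoping/correlation estimate. Using (H1) with a Lipschitz mollification $\tilde{\mathbf{1}}_{B(\tilde{x},e^{-u_n})}$ of the ball indicator, iterating over the $\sqrt{n}$ block-separations of length $t_n=\tilde{g}(n)$ gives
\[
\mathcal{E}_{\mathrm{mix}}(n)\;\leq\;C\sqrt{n}\,\Theta(\tilde{g}(n))
\;\leq\;C\sqrt{n}\,\theta_0^{(\log n)^{\gamma\beta}},
\]
which under the hypothesis $\gamma\beta>1$ is super-polynomially small in $n$ and hence negligible compared with the recurrence term. For the short-return error within a single block one shows (this is the content of one of the key propositions) that
\[
\left|\nu\left(\max_{1\le i\le p_n}X_i>u_n\right)-p_n\nu(\phi>u_n)\right|
\;\leq\;C\,\nu(E_n),
\]
so summing over the $\sqrt{n}$ blocks yields $\mathcal{E}_{\mathrm{rec}}(n)\le C\sqrt{n}\cdot\nu(E_n)$. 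Under (H2b), $\nu(E_n)\le C(\log n)^{-\alpha}$ gives a $(\log n)^{-\alpha}$ contribution (absorbing the $\sqrt{n}$ into the density-regularity error produces the $\kappa$-loss); under (H2c), the stretched exponential bound $\exp\{-(\log n)^{\alpha}\}$ dominates $\sqrt{n}$ and yields (\ref{eq.thm1.h2c}) with some $\tilde{\alpha}\in(0,\alpha)$.

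Finally, to obtain the Gumbel limit (\ref{eq.Mn.gumbel}), I combine $\mathcal{B}_1(n)\to 0$ with the elementary fact that $G_{\sqrt{n}}(u)=(1-\tau_n(u)/\sqrt{n})^{\sqrt{n}}\to\exp(-C(\tilde{x})e^{-u})$ whenever $\tau_n(u)\to C(\tilde{x})e^{-u}$. I expect the main technical obstacle to lie not in the blocking bookkeeping but in producing a quantitative a.e.\ density rate from $\rho\in L^{1+\delta}$ that is strong enough to guarantee the $\kappa$-dependence $\kappa>C_2/\beta$; equivalently, controlling how close $p_n\nu\{\phi>u_n\}$ sits to $\tau_n(u)/\sqrt{n}$ uniformly in $u$ on compact sets, on a full-measure set of $\tilde{x}$, with a rate that does not eat the $(\log n)^{-\alpha}$ gain from (H2b).
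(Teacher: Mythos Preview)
Your overall blocking framework with $p=q=\sqrt n$ and logarithmic gaps matches the paper's, but the recurrence step contains a genuine gap. You claim
\[
\left|\nu\Bigl(\max_{1\le i\le p_n}X_i>u_n\Bigr)-p_n\nu(\phi>u_n)\right|\le C\,\nu(E_n),
\]
and hence $\mathcal{E}_{\mathrm{rec}}(n)\le C\sqrt{n}\,\nu(E_n)$. Under (H2b) this gives $\sqrt{n}(\log n)^{-\alpha}\to\infty$, which is useless, and the phrase ``absorbing the $\sqrt{n}$ into the density-regularity error'' cannot rescue it: no $L^{1+\delta}$ estimate on $\rho$ will cancel a diverging polynomial factor. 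The global quantity $\nu(E_n)$ is simply the wrong object here.

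What is actually needed (and what the paper does in Proposition~\ref{prop.extreme2}) is a \emph{localization} of $E_n$ to the ball $B(\tilde{x},n^{-1/d})$: one must show that for $\nu$-a.e.\ $\tilde{x}$,
\[
\nu\bigl(B(\tilde{x},n^{-1/d})\cap E_n\bigr)\;\le\;C(\tilde{x})\,n^{-1}(\log n)^{-\tilde\alpha},
\]
which carries the crucial extra factor $n^{-1}$. The paper obtains this via the Hardy--Littlewood maximal function of $1_{E_n}\rho$ together with a Borel--Cantelli argument along a subexponential subsequence $\alpha_n=e^{n^b}$. Only then does the blocking error $pq\sum_{j=2}^{t}\nu(X_1>u_n,X_j>u_n)$ become $O\bigl((\log n)^{\tilde\gamma-\tilde\alpha}\bigr)$ after multiplying by $pq\sim n$.

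You have also misattributed the source of the $\kappa$-loss. It does not come from the density regularity of $\rho$; it arises because the gap length $t=g(n)=(\log n)^{\tilde\gamma}$ must satisfy $\tilde\gamma>1/\beta$ for the mixing term $pq\,\gamma(n,t)$ to be superpolynomially small, and this same factor $g(n)$ multiplies the localized recurrence bound. The role of $\rho\in L^{1+\delta}$ is confined to Proposition~\ref{prop.extreme1}, where the Lipschitz approximation of $1_{\{X_1>u_n\}}$ produces the exponent $\delta_1=\delta_1(\delta)$ in $\gamma(n,t)\le C\,\Theta(t)^{\delta_1}$; it does not enter the comparison of $p_n\nu\{\phi>u_n\}$ with $\tau_n(u)/\sqrt{n}$, which is exact by definition of $\tau_n$.
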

\begin{rmk}
In the case where (H1) holds and $\Theta(n)=O(n^{-\zeta})$ for some
$\zeta>0$ a corresponding estimate is derived in \cite{holland-nicol}
for the same observable function type, and for densities $\rho\in L^{1+\delta}.$ 
Under the assumption of (H2a) it is shown that for $\nu$-a.e. $\tilde{x}\in\XX$:
\begin{equation}
\left| \nu\{M_n\leq u_n\}- G_{\sqrt{n}}(u)\right|\leq\frac
{C_1}{n^{\frac{1}{2}-\kappa}}+ \frac{C_2}{
n^{\alpha-\kappa}}
,\quad\textrm{for any}\;
\kappa>\frac{C_3}{\zeta\delta},
\end{equation}
where $C_1,C_2$ and $C_3$ depend on $\tilde{x}$.
\end{rmk}
\begin{rmk}
For non-uniformly expanding systems, the constant $C(\tilde{x})$ in equation
\eqref{eq.Mn.gumbel} is determined by the value of the density $\rho$ at $\tilde{x}$. From the
functional form of $\phi(x)$, and the choice $a_n=d$ and $b_n=d^{-1}\log n$ we have
by Lebesgue differentiation:
\begin{equation}\label{eq.tau.conv}
\tau_n(u)=n\nu\left\{x\in\XX:\dist(x,\tilde{x})\leq\frac{e^{-u/d}}{n^{1/d}}\right\}\to\rho(\tilde{x})e^{-u},\;(n\to\infty). 
\end{equation}
In particular, control of the the error term $\mathcal{B}_2(n)$ in equation \eqref{eq.error.b1b2} depends 
on the convergence rate of $\tau_n(u)$ to $\tau(u)$ in equation \eqref{eq.tau.conv}. To bound
this rate additional regularity conditions on $\rho$ are required (such as H\"older continuity). 
\end{rmk}
We remark further that the error bound in equation \eqref{eq.thm1.h2b} is of little utility if
the constant $\alpha$ in (H2b) is small relative to $1/\beta$. The constant $\tilde\alpha$
in equation \eqref{eq.thm1.h2c} can in fact be chosen arbitrarily close to (but less than) $\alpha$.

Theorem \ref{thm.extreme} also extends to other observable types, such as those
of the form $\phi(x)=\psi(\dist(x,\tilde{x}))$, where $\psi(y)$ is a regularly
varying function taking its maximum at $y=0$. For example, suppose for some
positive function $\eta(u)$ we have
\begin{equation}\label{eq.sv}
\lim_{u\to 0}\frac{\psi^{-1}(u+\ell \eta(u))}{\psi^{-1}(u)}=e^{-\ell}.
\end{equation}
Then we get convergence to Type I in Theorem \ref{thm.extreme}. For corresponding conditions that lead
to convergence to Type II or III see \cite{FFT1,HNT} for further  details.

As an application of Theorem \ref{thm.extreme} we derive a bound on the rate of 
convergence to an EVD for the class of one-dimensional expanding \emph{Lorenz maps} considered in \cite{GHN}. 
These maps arise 
naturally out of the construction of the Poincar\'e map of the Lorenz equations \cite{Lorenz63}. 
See also Section \ref{sec.lorenz}. The one dimensional \emph{Lorenz map} $f:\XX\to \XX$ 
(with $\XX=[-1,1]$) satisfies the following conditions:
\begin{itemize}
\item[(L1)] There exist $C>0$ and $\lambda>1$ such that for all
$x\in \XX$ and $n>0$, $|(f^n)'(x)|>C\lambda^n.$
\item[(L2)] There exist $\beta',\beta\in(0,1)$ such that 
$f'(x)=|x|^{\beta-1}g(x)$
where $g\in C^{\beta'}(\XX)$, $g>0$.
\item[(L3)] $f$ is locally eventually onto, i.e. for all intervals
$J\subset \XX$, there exists $k=k(J)>0$ such that $f^k(J)=\XX$.
\end{itemize}
Notice that $f$ has a derivative singularity at $x=0$. We have the following
result.
\begin{thm}\label{thm.lorenz1}
Suppose that $f:\XX\to\XX$ is an expanding Lorenz map satisfying
(L1)-(L3), and suppose that $\phi(x)=-\log(\dist(x,\tilde{x}))$. Then there 
exists $\alpha>0$ such that for $\nu$-a.e. $x\in\XX$ we have:
\[
\left| \nu\{M_n\leq u+\log n\}- \exp\{-C(\tilde{x})e^{-u}\}\right|\leq 
O(1)\exp\{-(\log n)^{\alpha}\},
\]
where $C(\tilde{x})$ depends on the invariant density
at $\tilde{x}$.
\end{thm}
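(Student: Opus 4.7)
The plan is to deduce Theorem \ref{thm.lorenz1} as a direct application of part~(2) of Theorem \ref{thm.extreme}, together with an auxiliary estimate for the term $\mathcal{B}_2(n)$. Since $d=1$, the normalizing sequences reduce to $a_n=1,\ b_n=\log n$ and hence $u_n = u + \log n$, which matches the formulation of the theorem. Four ingredients must be checked for expanding Lorenz maps satisfying (L1)--(L3): existence of an ergodic absolutely continuous invariant measure $\nu$ with density $\rho\in L^{1+\delta}(m)$; the decay-of-correlations hypothesis (H1); the recurrence hypothesis (H2c); and enough local regularity of $\rho$ to control $\mathcal{B}_2(n)$.

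The first two ingredients are supplied by the standard analysis of expanding Lorenz maps, cf.\ \cite{GHN}. Quasi-compactness of the transfer operator on a suitable space of BV functions yields a unique ergodic invariant density $\rho$ that is bounded on any compact subset of $\XX\setminus\{0\}$ with at most an integrable singularity at $0$, so $\rho\in L^{1+\delta}(m)$ for some $\delta>0$, and simultaneously gives exponential decay of correlations against Lipschitz observables, so (H1) holds with $\Theta(n)=O(\theta_0^n)$ (in particular $\beta=1$). The recurrence estimate (H2c) is precisely what is established in \cite{GHN, Licheng}: for some $\gamma>1$ and $\alpha\in(0,1)$, setting $\tilde{g}(n)\sim(\log n)^\gamma$ one has $\nu(E_n)\leq C\exp\{-(\log n)^\alpha\}$. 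The compatibility $\gamma\beta=\gamma>1$ is automatic, and Theorem \ref{thm.extreme}(2) produces some $\tilde\alpha>0$ with
\begin{equation*}
\mathcal{B}_1(n)=\bigl|\nu\{M_n\leq u+\log n\}-G_{\sqrt{n}}(u)\bigr|\leq C_1\exp\{-(\log n)^{\tilde\alpha}\}
\end{equation*}
at $\nu$-a.e.\ $\tilde{x}\in\XX$, where $\tilde\alpha$ can be taken arbitrarily close to (but strictly below) $\alpha<1$.

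Finally, one controls $\mathcal{B}_2(n)=|G_{\sqrt{n}}(u)-\exp\{-C(\tilde{x})e^{-u}\}|$ as follows. By Lebesgue differentiation, $\tau_n(u)=n\,\nu\{\dist(x,\tilde{x})\leq e^{-u}/n\}\to 2\rho(\tilde{x})e^{-u}=:C(\tilde{x})e^{-u}$; piecewise H\"older regularity of $\rho$ on closed subintervals of $\XX\setminus\{0\}$ (a consequence of the same quasi-compactness) upgrades this to a polynomial rate $|\tau_n(u)-C(\tilde{x})e^{-u}|=O(n^{-\eta})$ for some $\eta>0$ at $\nu$-a.e.\ $\tilde{x}$. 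Combined with the elementary bound $|(1-x/N)^N-e^{-x}|\leq x^2/N$ valid for $x$ in a compact set and $N$ large, this gives $\mathcal{B}_2(n)=O(n^{-1/2})+O(n^{-\eta})$. Because $\tilde\alpha<1$, polynomial decay is asymptotically dominated by $\exp\{-(\log n)^{\tilde\alpha}\}$, so the triangle inequality $|\nu\{M_n\leq u_n\}-\exp\{-C(\tilde{x})e^{-u}\}|\leq\mathcal{B}_1(n)+\mathcal{B}_2(n)$ yields the stated bound with $\alpha:=\tilde\alpha$. The real obstacle (already handled outside this paper) is the verification of (H2c): the derivative singularity at $0$ prevents any global Lipschitz control of iterates, so ruling out concentration of mass on $(\log n)^\gamma$-approximately periodic orbits requires a delicate covering argument built from (L1) and (L3) rather than an appeal to uniform hyperbolicity, carried out in \cite{GHN, Licheng}.
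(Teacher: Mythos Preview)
Your proposal is correct and follows essentially the same route as the paper: apply Theorem~\ref{thm.extreme}(2) using the recurrence estimate (H2c) for Lorenz maps from \cite{GHN} together with exponential decay of correlations to bound $\mathcal{B}_1(n)$, then bound $\mathcal{B}_2(n)$ polynomially via regularity of the density and absorb it into $\exp\{-(\log n)^{\tilde\alpha}\}$ since $\tilde\alpha<1$. The only minor difference is that the paper invokes the stronger fact that $\rho$ is of bounded variation (hence in $L^\infty$), which gives $|\tau_n(u)-C(\tilde{x})e^{-u}|=O(1/n)$ directly rather than your $O(n^{-\eta})$.
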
 
We remark that this theorem provides us with an estimate on the combined error terms $\mathcal{B}_1(n)$
and $\mathcal{B}_2(n)$ as specified in equation \eqref{eq.error.b1b2}.

\subsection{Convergence to an EVD for systems with stretched exponential 
mixing rates}\label{sec.stretched}
In this section we establish convergence to an EVD for systems with stretched 
exponential mixing rates. We will assume that $(f,\XX,\nu)$ is a non-uniformly
expanding system modelled by a Young tower with a stretched-exponential return
time asymptotic. We summarize the tower model as follows, see \cite{Young1, Young2}.
We suppose that there is a set 
$\Lambda\subset\XX$ together with a countable partition into subsets
$\{\Lambda_i\}$. Let $R:\Lambda\to\mathbb{N}$ be an $L^1(m)$ roof function 
with the property that 
\[
R|_{\Lambda_{l}}: = R_l,\quad\forall\Lambda_l\subset\Lambda,
\]
and $f^{R_i}\Lambda_i=\Lambda$ (modulo sets of Lebesgue measure zero). 
The Young Tower is given by 
\[
\Delta = \underset{i, l \le R_i-1}{\bigcup}\{ (x, l) : x\in \Lambda_{i }\},
\] 
and the  tower map $F:\Delta \to \Delta $ by
\[
F(x, l) = \begin{cases}(x, l+1) & \mbox{ if }  x\in \Lambda_{i}, l<R_i-1\\
(f^{R_i}x, 0) &\mbox{ if } x\in \Lambda_{i}, l = R_i-1\end{cases}.
\]
Define the map $\widehat{F}=F^R:\Lambda\to\Lambda$. Under further hypotheses
on $\widehat{F}$, such as bounded distortion, it is shown that 
$\widehat{F}$ preserves an invariant ergodic measure $\nu_{0}$ which
is uniformly equivalent to $m$. The statistical properties of $f$ such
as mixing rates can be determined from the asymptotics of
$m\{x\in\Lambda:R(x)>n\}.$  We have the following result.
\begin{thm}\label{thm.stretched}
Suppose that $f: \XX \to \XX$ is a non-uniformly expanding map 
modelled by a Young tower over a base set $\Lambda$, 
and for constants $\theta_0,\beta<1$
assume that $m\{x\in\Lambda: R(x)>n\}=O(\theta^{n^{\beta}}_0)$. Assume further that $\|Df\|_{\infty}<\infty$.
Consider the observable function $\phi(x)=-\log(\dist(x,\tilde{x}))$, and suppose in equation
\eqref{eq.tauG.linear} we take sequences $a_n=d$, $b_n=d^{-1}\log n$.
Then for $\nu$-a.e.  
$\tilde{x}\in \XX$ we have
\begin{equation}\label{eq.thm-weak}
\left| \nu\{M_n\leq u_n\}- G_{\sqrt{n}}(u)\right|\leq 
O(1)\exp\{-(\log n)^{\tilde\beta}\},
\end{equation}
where $\tilde\beta>0$ is independent of $n$, but dependent 
on $\tilde{x}$. Furthermore for $\nu$-a.e. 
$\tilde{x}\in \XX$, there exists $C(\tilde{x})$ such that:
\[
\nu\{M_n\le u_n\}\to \exp\{-C(\tilde{x})e^{-u}\}.
\]
\end{thm}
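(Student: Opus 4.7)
The plan is to reduce Theorem \ref{thm.stretched} to Theorem \ref{thm.extreme}(2) by verifying the two standing hypotheses (H1) and (H2c) for $f$, together with the compatibility condition $\gamma\beta_{1}>1$ that ties the correlation exponent to the recurrence time scale. Once these are in place, \eqref{eq.thm-weak} is immediate, and the Gumbel limit then follows by combining it with the convergence $\tau_{n}(u)\to\rho(\tilde{x})e^{-u}$, which, as explained in the remark following Theorem \ref{thm.extreme}, holds at $\nu$-a.e.\ density point $\tilde{x}$ by Lebesgue differentiation.

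For (H1), a stretched exponential tail $m\{R>n\}=O(\theta_{0}^{n^{\beta}})$ on the return time translates, by the classical arguments of \cite{Young1,Young2}, into stretched exponential decay of correlations against Lipschitz versus $L^{\infty}$ observables: $\Theta(n)=O(\theta_{1}^{n^{\beta_{1}}})$ for some $\theta_{1}<1$ and $\beta_{1}\in(0,\beta]$, and one may take $\beta_{1}=\beta$ up to shrinking $\theta_{1}$. Choosing any $\gamma>1/\beta_{1}$ in $\tilde{g}(n)=(\log n)^{\gamma}$ then secures the product hypothesis $\gamma\beta_{1}>1$.

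The hard step is the verification of (H2c), namely $\nu(E_{n})\leq C\exp\{-(\log n)^{\alpha}\}$ for some $\alpha\in(0,1)$. The idea is to lift $E_{n}$ to the tower $\Delta$ and decompose it by tower level. Fix a threshold $L_{n}=(\log n)^{\gamma_{0}}$ with $\gamma_{0}>0$ to be optimized. The part of $E_{n}$ supported at levels above $L_{n}$ is contained in the projection of $\{(x,\ell)\in\Delta:\ell\geq L_{n}\}$ and, since the invariant measure on $\Lambda$ is uniformly equivalent to $m$, its $\nu$-measure is at most $\sum_{\ell\geq L_{n}}m\{R>\ell\}=O(\theta_{0}^{L_{n}^{\beta}})=\exp\{-c(\log n)^{\gamma_{0}\beta}\}$, which is already of stretched exponential type. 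For the complementary portion at low tower levels, the assumption $\|Df\|_{\infty}<\infty$ keeps single iterates of $f$ from distorting geometry uncontrollably, while bounded distortion of the induced map $\widehat{F}$ controls longer-range returns; this lets one cover $\{x:\dist(x,f^{j}x)\leq n^{-1/d}\}$ by a controlled number of dynamically defined pieces of $\nu$-measure $O(n^{-\delta'})$ for some $\delta'>0$ depending on the distortion constants and the $L^{1+\delta}$ integrability of $\rho$. Summing over $j\leq\tilde{g}(n)$ produces a bound of the form $(\log n)^{\gamma}n^{-\delta'}$ on the low-level contribution, which easily dominates the stretched exponential tail. Optimizing $\gamma_{0}$ then yields (H2c) for a suitable $\alpha\in(0,1)$.

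With (H1), (H2c), and $\gamma\beta_{1}>1$ all secured, Theorem \ref{thm.extreme}(2) delivers \eqref{eq.thm-weak} directly, and combined with the Lebesgue-differentiation limit $\tau_{n}(u)\to C(\tilde{x})e^{-u}$ at $\nu$-a.e.\ $\tilde{x}$ (with $C(\tilde{x})$ proportional to the density $\rho(\tilde{x})$) we conclude $\nu\{M_{n}\leq u_{n}\}\to\exp\{-C(\tilde{x})e^{-u}\}$, completing the proof of both conclusions of the theorem.
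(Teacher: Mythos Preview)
Your reduction to Theorem \ref{thm.extreme}(2) has a gap: that theorem assumes $\rho\in L^{1+\delta}(m)$, which is \emph{not} a hypothesis of Theorem \ref{thm.stretched} and is not known to hold for Young towers with merely stretched exponential tails. The paper confronts this explicitly: Lemma \ref{lem.density} establishes only the weaker regularity $\nu(A)\leq O(\exp\{-|\log m(A)|^{\hat\beta}\})$ and remarks that $\rho$ need not belong to any $L^{p}$ with $p>1$. Consequently the paper cannot (and does not) invoke Theorem \ref{thm.extreme}; instead it runs the blocking argument directly, using Case~2 of Proposition \ref{prop.extreme1} (which is tailored to this sub-$L^{p}$ regularity) to bound $\gamma(n,t)$, and Case~3 of Proposition \ref{prop.extreme2} for the recurrence term. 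Your appeal to ``$L^{1+\delta}$ integrability of $\rho$'' in the low-level covering step has the same problem.

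Your sketch of (H2c) is also not convincing as written. Splitting by tower level bounds the high-level tail nicely, but being at a low tower level says nothing about whether $\dist(x,f^{j}x)\leq n^{-1/d}$ for some small $j$; the two decompositions are essentially orthogonal. Your assertion that the low-level part is covered by ``a controlled number of dynamically defined pieces of $\nu$-measure $O(n^{-\delta'})$'' would, if true, give a polynomial bound on $\nu(E_{n})$---i.e.\ essentially (H2a)---which is stronger than what the paper obtains and is not substantiated. The paper's actual route (Proposition \ref{prop.En-sub}) is different: it first derives an estimate on $\nu(F_{j}(\epsilon))$ valid for large $j$ via Collet's argument, and then handles small $j$ by the iteration trick $F_{j}(\epsilon)\subset F_{rj}(\tilde{K}^{rj}\epsilon)$ with $r\approx j^{-1}\log n$, trading a small time for a larger time at the cost of inflating the scale. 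Both pieces then give stretched-exponential bounds in $\log n$, yielding (H2c) but nothing better.
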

We shall prove this theorem in Section \ref{sec.proofstretched}. The result optimizes the arguments
developed in \cite{Collet}. Along the way we show that such systems satisfy condition (H2c), and we
obtain an estimate on the regularity of the invariant density for systems having sub-exponential
decay of correlations. The abstract approach we adopt does not appear to extend to systems having polynomial
decay of correlations. However, certain systems with polynomial decay of correlations are known to admit
extreme value laws, see \cite{HNT} in the case of non-uniformly expanding systems, and see \cite{Haydn-Wasilewska}
in the case of hyperbolic systems. In the latter, they gain an error estimate of logarithmic type. 
We will discuss further hyperbolic systems in Section \ref{sec.hyperbolic}.

\subsection{Convergence to an EVD for partially hyperbolic systems}\label{sec.partial}

In this section we study convergence to an EVD for partially hyperbolic 
systems and establish error rates for convergence
under weak or strong assumptions on the quantitative recurrence statistics.
The results we state build upon the work of \cite{Gupta}, and we solve
a question posed within on the existence of an EVD limit law for the 
Alves-Viana map \cite{AV}.

Suppose that $Y$ is a compact, 
$d_Y$-dimensional manifold with metric $\dist_{Y}$ and $X$
is a compact $d_X$-dimensional manifold with metric $\dist_X$ . We let $d=d_X+d_Y$ and define a
metric on $X\times Y$ by
$$d((x_1,\theta_1),(x_2,\theta_2))=\sqrt{\dist_{X}(x_1,x_2)^2+\dist_Y(\theta_1,\theta_2)^2}.$$
We denote the Lebesgue measure on X by $m_{X}$, the Lebesgue measure on $Y$ by $m_Y$ and the
product measure on $X\times Y$ by $m=m_X\times m_Y$.

If $T:X\to X$ is a measurable transformation and $u:X\times Y\to Y$ a measurable
function, then we can define a $Y$-skew extension of $T$ by $u$, via the map:
$f:X\times Y\to X\times Y$:
\begin{equation}\label{eq.partialsystem}
f(x,\theta)=(T(x),u(x,\theta)).
\end{equation}
We assume further that $T:X\to X$ has an ergodic invariant measure $\nu_X$,
and $f$ preserves an invariant probability measure $\nu$ with density in
$\rho\in L^{1+\delta}(m)$. Given $(\tilde{x},\tilde\theta)$, we consider 
the observable function 
$\phi(x,\theta)$ with representation 
$$\phi(x,\theta)=\psi(\dist((x,\theta),(\tilde{x},\tilde\theta))),$$
where $\psi:\mathbb{R}^{+}\to\mathbb{R}$ takes it maximum value at $0$.
For partially hyperbolic systems we phrase assumptions (H2a)-(H2c) in
terms of the measure $\nu_{X}$ and the recurrence set $E^{X}_n$, where
$$E^{X}_{n}:=\left\{x\in X:\,\dist_X(T^jx,x)<n^{-\frac{1}{d_X}},\;
\textrm{some}\;j\in[1,\tilde{g}(n)]\right\}.$$

\begin{thm}\label{thm.partial} 
Suppose that $(f,X\times Y,\nu)$ is a partially hyperbolic system
with representation given in equation \eqref{eq.partialsystem}. Assume that 
$\rho\in L^{1+\delta}$ for some $\delta>0$.
Consider the observable function  $\phi(x)=-\log(\dist((x,\theta),(\tilde{x},\tilde{\theta})))$, 
and suppose in equation \eqref{eq.tauG.linear} we take sequences $a_n=d$, $b_n=d^{-1}\log n$. 
Then we have the following.
\begin{enumerate}
\item
Suppose that $\Theta(n)=O(\theta_0^{n^{\beta}})$ for some $\theta_0<1$
and $\beta\in(0,1]$, and condition (H1) holds. Suppose also
that condition (H2b) holds for the set $E^{X}_n$ and the measure $\nu_X$.
Assume further that $\gamma\beta>1$. Then for all $\epsilon>0$ and $\nu$-a.e. 
$(\tilde{x},\tilde{\theta})\in\XX$ we have that
\begin{equation}\label{eq.partial-weak}
\left| \nu\{M_n\leq u_n\}- G_{\sqrt{n}}(u)\right|\leq 
\frac{C_1}{(\log n)^{\alpha-\kappa}},\quad\textrm{for any}\;\kappa>\frac{C_2}{\beta},
\end{equation}
where $C_1>0$ depends on $(\tilde{x},\tilde{\theta})$, and $C_2>0$ depends on $\delta$.
\item
Suppose that  $\Theta(n)=O(n^{-\zeta})$ for some $\zeta>0$  and (H1) holds. 
Suppose also that condition (H2a) holds for the set $E^{X}_n$ and the 
measure $\nu_X$. Then for $\nu$-a.e. $(\tilde{x},\tilde{\theta})\in\XX$ 
we have that \begin{equation}\label{eq.partial-strong}
\left| \nu\{M_n\leq u_n\}- G_{\sqrt{n}}(u)\right|\leq 
\frac{C_1}{n^{1/2-\kappa}}+ \frac{C_2}{
n^{\alpha-\kappa}},\quad\textrm{for any}\;
\kappa>\frac{C_3}{\zeta\delta},
\end{equation}
where $C_1,C_2>0$ depend on $(\tilde{x},\tilde{\theta})$, and $C_3>0$.
\end{enumerate}
Moreover in both cases above we have for some $C(\tilde{x},\tilde{\theta})>0$:
\begin{equation}
\lim_{n\to\infty}\nu(M_n<u_n)=\exp\{-C(\tilde{x},
\tilde{\theta})e^{-u}\}.
\end{equation}
\end{thm}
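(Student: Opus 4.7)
The plan is to reduce Theorem \ref{thm.partial} to the abstract EVD machinery underlying Theorem \ref{thm.extreme}, by verifying that the skew product $(f,X\times Y,\nu)$ satisfies the three key ingredients used there: decay of correlations for Lipschitz--$L^\infty$ pairs, an asymptotic bound on the measure of the recurrence sets $E_n\subset X\times Y$, and sufficient regularity of the invariant density at $\nu$-typical points. Condition (H1) is imposed directly on the full system, and the assumption $\rho\in L^{1+\delta}(m)$ supplies the density regularity needed for Lebesgue differentiation. Hence the only substantive additional task is to transfer the recurrence bounds on $(T,X,\nu_X)$, formulated in terms of $E^{X}_n$, to recurrence bounds on the product.

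The key observation for the transfer is the inequality $d(f^j(x,\theta),(x,\theta))\ge \dist_X(T^jx,x)$ that follows from the skew structure $f(x,\theta)=(T(x),u(x,\theta))$. Consequently, if $(x,\theta)\in E_n$ is witnessed by some time $j\in[1,\tilde g(n)]$, then $\dist_X(T^jx,x)\le n^{-1/d}$ at the same time $j$. Choosing $m=n^{d_X/d}$ so that $m^{-1/d_X}=n^{-1/d}$, the projection $\pi:X\times Y\to X$ satisfies $\pi(E_n)\subset E^{X}_m$. Since $\pi_*\nu$ is $T$-invariant and absolutely continuous, it coincides with $\nu_X$ under the standing ergodicity and uniqueness assumptions, so
\[
\nu(E_n)\le \nu_X(E^{X}_{n^{d_X/d}}).
\]
Feeding (H2b) (respectively (H2a)) into the right-hand side, and using $\log n^{d_X/d}\asymp \log n$, yields a bound on $\nu(E_n)$ of the same functional form (logarithmic or polynomial) as on the base, up to a rescaling of the exponent which I would absorb into the constant $\alpha$.

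With (H1), the transferred recurrence bound, and the density regularity all in place for the full skew product, the two cases now follow by repeating the arguments used for parts (1) and (2) of Theorem \ref{thm.extreme}: invoke the three propositions of Section \ref{sec.general} to control $\mathcal{B}_1(n)$, and apply Lebesgue differentiation to $\rho$ at $\nu$-typical $(\tilde x,\tilde\theta)$ to establish $\tau_n(u)\to C(\tilde x,\tilde\theta)e^{-u}$ and hence the Gumbel limit. The hard part will be the transfer step: one has to verify carefully the disintegration $\pi_*\nu=\nu_X$ in the partially hyperbolic setting, reconcile the tolerance scales $n^{-1/d}$ and $m^{-1/d_X}$ with the shared time scale $\tilde g$, and confirm that no spurious loss is incurred when passing between $n$ and $n^{d_X/d}$. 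Once this accounting is done, applying Theorem \ref{thm.extreme}'s proof template with $X\times Y$ in place of $\XX$ is essentially routine.
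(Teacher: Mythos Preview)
Your proposal is correct and follows essentially the same route as the paper: the paper invokes \cite[Proposition~3.4]{Gupta} to obtain the transfer inequality $\nu(E_n)\le C\,\nu_X(E^{X}_{n^{d_X/d}})$ that you derive directly via the projection $\pi$ and the inequality $d(f^j(x,\theta),(x,\theta))\ge\dist_X(T^jx,x)$, and then both arguments feed this into Propositions~\ref{prop.blocking}--\ref{prop.extreme2} with the choices $p=q=\sqrt{n}$ and $t=(\log n)^{\tilde\gamma}$ (Case~1) or $t=n^{\kappa}$ (Case~2). The time-scale reconciliation you flag is exactly the bookkeeping hidden in the cited proposition, and the identification $\pi_*\nu=\nu_X$ is part of the standing setup in \cite{Gupta}.
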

As an application, we consider the Alves-Viana map 
$f:S^1\times I\to S^1\times I$ given by 
\begin{equation}\label{eq.av}
f(x,y)=(dx\mod 1,a_0+\eta(x)-x^2),
\end{equation}
where in \cite{AV} they explicitly take $d=16$ and 
$\eta(x)=\epsilon\sin(2\pi x)$ for $\epsilon$ sufficiently small. 
The parameter $a_0$ is chosen so that the point $x=0$ is pre-periodic under 
the map $x\mapsto a_0-x^2$. It is shown in \cite{gouezel} that $(f,\XX\,u)$ 
is modelled by a Young tower with $m\{R>n\}\leq C\theta_0^{n^{\beta}}$ for some 
$\beta,\theta_0<1$. We obtain:
\begin{thm}\label{thm.alvesviana}
Suppose $(f,\XX,\nu)$ is the Alves-Viana map given by equation \eqref{eq.av},
and we take observable function $\phi(x)=-\log \dist((x,\theta),(\tilde{x},\tilde\theta))$
in equation \eqref{eq.tauG.linear}. Then for all $\epsilon>0$ and $\nu$-a.e. $(\tilde{x},\tilde{\theta})
\in\XX$:
\begin{equation}\label{eq.thm-av}
\left| \nu\left\{M_n\leq\frac{1}{2}(u+\log n)\right\}- G_{\sqrt{n}}(u)\right|\leq 
C((\tilde{x},\tilde\theta))\frac{1}{n^{1/2-\epsilon}},
\end{equation}
where $C>0$. Moreover, for $\nu$-a.e. $\tilde{x}\in\XX$, and some $C_1(\tilde{x},\tilde{\theta})>0$ we have
\begin{equation}
\lim_{n\to\infty}\nu\left(M_n<\frac{1}{2}(u+\log n)\right)=\exp\{-C_1e^{-u}\}.
\end{equation}
\end{thm}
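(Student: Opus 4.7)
The plan is to realize the Alves--Viana map as a skew product fitting the framework of Theorem \ref{thm.partial}(2) and to verify its hypotheses with constants strong enough to yield the $n^{-1/2+\epsilon}$ rate. I would write $X=S^1$, $Y=I$, $T(x)=16x\bmod 1$, and $u(x,\theta)=a_0+\eta(x)-\theta^2$, so that \eqref{eq.av} is of the form \eqref{eq.partialsystem} with $d_X=d_Y=1$ and $d=2$; this matches the normalization $a_n=d=2$, $b_n=d^{-1}\log n=\frac{1}{2}\log n$ used in \eqref{eq.thm-av}. By \cite{gouezel}, $(f,\XX,\nu)$ admits a Young tower with tail $m\{R>n\}=O(\theta_0^{n^{\beta}})$, the SRB density satisfies $\rho\in L^{1+\delta}(m)$ for some $\delta>0$, and the standard Young--tower machinery promotes the stretched--exponential tail to stretched--exponential decay of correlations against Lipschitz observables, so (H1) holds with $\Theta(n)=O(\theta_1^{n^{\tilde\beta}})$ for some $\theta_1<1$ and $\tilde\beta\in(0,1)$. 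In particular $\Theta(n)=O(n^{-\zeta})$ for \emph{every} $\zeta>0$, so the polynomial--mixing hypothesis of part~2 is available with $\zeta$ as large as desired.

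Next I would verify (H2a) for the base map $T$ with $\nu_X=m_X$. Since $T$ is piecewise linear with constant slope $16$, for each $j\geq 1$ the condition $\dist_X(T^j x,x)\leq n^{-1}$ unfolds to $(16^j-1)x\in[-n^{-1},n^{-1}]\bmod 1$, a union of $16^j-1$ arcs each of length $2/(n(16^j-1))$, hence of total Lebesgue measure $\leq 2/n$ independent of $j$. Summing over $j\in[1,n^{\gamma}]$ gives $\nu_X(E^X_n)\leq 2n^{\gamma-1}$, so (H2a) holds with $\alpha$ arbitrarily close to $1$ by taking $\gamma>0$ correspondingly small. In particular $\alpha>1/2$ is available.

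Applying Theorem \ref{thm.partial}(2) with these parameters then gives
$$\left|\nu\{M_n\leq u_n\}-G_{\sqrt{n}}(u)\right|\leq \frac{C_1}{n^{1/2-\kappa}}+\frac{C_2}{n^{\alpha-\kappa}},\qquad \kappa>\frac{C_3}{\zeta\delta}.$$
Because $\zeta$ is free and $\alpha>1/2$, for any prescribed $\epsilon>0$ one picks $\zeta$ so large that $\kappa<\epsilon$ and $\alpha-\kappa>\frac{1}{2}-\epsilon$; both terms are then $O(n^{-1/2+\epsilon})$, which is \eqref{eq.thm-av}. The Gumbel limit $\nu(M_n\leq u_n)\to\exp(-C_1(\tilde{x},\tilde\theta)e^{-u})$ is supplied directly by the final conclusion of Theorem \ref{thm.partial}.

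The only step that is not a pure citation is the recurrence count on the base, and it is clean here only because $T$ is piecewise linear with constant Jacobian, so every branch of $T^j$ contributes the same measure and there is no distortion bookkeeping. The most delicate point will be the fine print of Theorem \ref{thm.partial}: confirming that the product density really sits in $L^{1+\delta}(m_X\times m_Y)$ (and not only that the base density does), and that the convergence $\tau_n(u)\to\tau(u)$ at a $\nu$--typical point $(\tilde x,\tilde\theta)$ is fast enough not to reintroduce a coarser error through $\mathcal{B}_2(n)$. Both should follow from the standard properties of Gou\"ezel's tower, but they are the only places where one must look inside the tower rather than treat Theorem \ref{thm.partial} as a black box.
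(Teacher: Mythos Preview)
Your architecture is right and your base-map recurrence computation is correct (and matches the paper's). The gap is exactly the point you flag at the end as ``delicate'': you assert that $\rho\in L^{1+\delta}(m)$ follows from Gou\"ezel's tower, but it does not. The paper states this explicitly immediately after Theorem \ref{thm.alvesviana}: ``it is not known that the density $\rho$ belongs to $L^p$ (for $p>1$).'' So Theorem \ref{thm.partial}(2) cannot be invoked as a black box, because its $L^{1+\delta}$ hypothesis is unavailable for this system.

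The paper's fix is to open up the blocking argument rather than apply Theorem \ref{thm.partial} directly. Lemma \ref{lem.density} extracts from the stretched-exponential return-time tail the weaker regularity estimate $\nu(A)=O(\exp\{-|\log m(A)|^{\hat\beta}\})$ for measurable $A$. This is precisely the hypothesis of Case~2 of Proposition \ref{prop.extreme1}, which then gives $\gamma(n,t)\leq O(1)\exp\{-|\log\Theta(t)|^{\delta_1}\}$ in place of the Case~1 bound (which is the one requiring $L^{1+\delta}$). Taking the gap $t=(\log n)^{\tilde\gamma}$ with $\tilde\gamma\beta\delta_1>1$ makes $pq\,\gamma(n,t)$ decay super-polynomially and hence negligible. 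Your (H2a) estimate $\nu_X(E^X_n)\leq Cn^{-1}\tilde g(n)$ on the base, combined with the lift $\nu(E_n)\leq C\nu_X(E^X_{n^{1/2}})\leq Cn^{-1/2}$ and Proposition \ref{prop.extreme2}, then delivers the $n^{-1/2+\epsilon}$ rate. So the structure you propose is essentially the paper's, but the density-regularity step needs Lemma \ref{lem.density} as a genuine replacement for the $L^{1+\delta}$ input, not a citation to \cite{gouezel}.
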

Notice that the bound on the error rate is sharper than that established
in Theorem \ref{thm.stretched}. In fact the error rate comes from
the rate associated to $\nu_X(E^{X}_n)$ using Theorem \ref{thm.partial}. 
However, as was pointed out in \cite{Gupta}, it is not known that the density
$\rho$ belongs to $L^p$ (for $p>1$). We get round this issue via a weaker
bound on its regularity, and show that the conclusion of 
Theorem \ref{thm.partial} is still applicable.


\subsection{Convergence to an EVD for non-uniformly hyperbolic systems}\label{sec.hyperbolic}
For non-uniformly hyperbolic systems, we suppose that $\nu$ is a
Sinai-Ruelle-Bowen (SRB) measure, and $(f, \XX ,\nu)$ is a non-uniformly hyperbolic system
modelled by a Young tower \cite{Young1}. Relative to non-uniformly expanding systems
we need a version of (H1) restricted to the class of Lipschitz functions. We state the following assumption: 
\begin{enumerate}
\item[(H1s)]{\bf (Decay of correlations).}
There exists a monotonically decreasing sequence $\Theta(j)\to 0$ such that
for all Lipschitz $\varphi_1$  and $\varphi_2$:
\[
\left|\int \varphi_1 \cdot \varphi_2\circ f^j d \nu -\int \varphi_1 d \nu \int \varphi_2 d\nu\right|
\leq
\Theta(j) \|\varphi_1\|_{\textrm{Lip}} \|\varphi_2\|_{\textrm{Lip}},
\]
where $\|\cdot\|_{\textrm{Lip}}$ denotes the Lipschitz norm.
\end{enumerate}
For non-uniformly hyperbolic systems the measure $\nu$ need not be
absolutely continuous with respect to Lebesgue measure. Its regularity can
be determined by using local dimension estimates. Recall that the 
pointwise local dimension of $\nu$ is given by:
\begin{equation}
d_{\nu}:=\lim_{r\to 0}\frac{\log\nu(B(x,r))}{\log r},
\end{equation}
whenever this limit exists. For the examples we consider the local dimension 
of $\nu$ 
exists for $\nu$-a.e. $x\in\XX$. However, we also need control on the regularity of $\nu$ on certain
shrinking annuli. We state the following assumption (H3):
\begin{enumerate}
\item[(H3)]{\bf (Regularity of $\nu$ on shrinking annuli).} 
For all $\delta>1$ and $\nu$-a.e.$x\in\XX$, there exists $\sigma>0$ such that
\begin{equation}
|\nu(B(x,r+r^{\delta}))-\nu(B(x,r))|\leq Cr^{\sigma\delta}.
\end{equation}
The constant $C$ and $\sigma$ depending on $x$ (but not $\delta$).
\end{enumerate}
To state our result, we take explicitly the observation $\psi(u)=-\log u$.
Analogous results hold for other functional forms, such as the case where
$\psi(u)$ is regularly varying at $u=0$, see \cite{HNT,FFT1}.
\begin{thm}\label{thm.extreme2}
Suppose that $(f,\XX,\nu)$ is a non-uniformly hyperbolic system modelled by a Young tower with
SRB measure $\nu$. Suppose the local dimension $d_{\nu}$ exists
for $\nu$-a.e. $x\in\XX$, and (H3) holds. 
Consider the observable $\phi(x)=-\log(\dist(x,\tilde{x}))$,
and suppose that $u_n:=u_{n}(u)$ is a sequence such that 
$\limsup_{n\to\infty}n\nu\{\phi(x)>u_n\}=\tau(u)<\infty.$
\begin{enumerate}
\item 
Suppose that $\Theta(n)=O(\theta^{n^{\beta}}_0)$ for some $\theta_0<1,\beta\in(0,1]$ 
and (H1s) holds together with (H2b), with $\gamma\beta>1$. 
Then for all $\epsilon>0$, and for $\nu$-a.e. $\tilde{x}\in \XX$ we have
\[
\left| \nu\{M_n\leq u_n\}- G_{\sqrt{n}}(u)\right|\leq 
\frac{C_1}{(\log n)^{\alpha-\kappa}},\quad\textrm{for any}\;
\kappa>\frac{C_2}{\beta},
\]
where $C_1>0$ depends on $\tilde{x}$ and $C_2>0$ depends on $\sigma$. 
\item Suppose that for $\zeta>0$, $\Theta(n)=O(n^{-\zeta})$ and (H1) 
holds together with (H2a). Then, for all $\epsilon>0$, and $\nu$-a.e. 
$\tilde{x}\in \XX$:
\[
\left| \nu\{M_n\leq u_n\}- G_{\sqrt{n}}(u)\right|\leq 
\frac{C_1}{n^{\frac{1}{2}-\kappa}}+
\frac{C_2}{
n^{\alpha-\kappa}}
,\quad\textrm{for any}\;
\kappa>\frac{C_3}{\zeta\sigma},
\]
where $C_1,C_2>0$ depend on $\tilde{x}$ and $C_3>0$.
\end{enumerate}
\end{thm}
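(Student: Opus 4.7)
The plan is to follow the same blocking-argument machinery that is developed in Section~\ref{sec.general} and already used for Theorem~\ref{thm.extreme}, but with two modifications appropriate to the non-uniformly hyperbolic setting: (i)~the decay of correlations is only available on Lipschitz$\times$Lipschitz pairs via (H1s), not on Lipschitz$\times L^\infty$ pairs as in (H1); and (ii)~the measure $\nu$ need not be absolutely continuous, so regularity at the target point $\tilde{x}$ must be extracted from the local-dimension hypothesis together with the annular estimate (H3) in place of an $L^{1+\delta}$ bound on the density. I would first write the bound on $\mathcal{B}_1(n)$ via the three propositions of Section~\ref{sec.general} in the abstract form of a sum of three error terms: a \emph{gap error} coming from discarding small blocks between long blocks, a \emph{mixing error} coming from correlation decay over the time scale $\tilde g(n)$, and a \emph{short-return error} controlled by $\nu(E_n)$. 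The choice of block length $p=n/\sqrt{n}=\sqrt n$ and of $\tilde g(n)$ (either $n^\gamma$ or $(\log n)^\gamma$) is as in the previous theorems.

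The key new input is the smoothing of the observable. Since $\phi(x)=-\log\dist(x,\tilde x)$ is not Lipschitz near $\tilde x$, I would approximate $\chi_{\{\phi>u_n\}}=\chi_{B(\tilde x,r_n)}$ (with $r_n\asymp e^{-u_n/d_\nu}$) by a Lipschitz bump function $\varphi^{\pm}_n$ supported on $B(\tilde x, r_n\pm r_n^{\delta})$ and taking the value $1$ on $B(\tilde x, r_n\mp r_n^{\delta})$, so that $\|\varphi^{\pm}_n\|_{\mathrm{Lip}}=O(r_n^{-\delta})$ for some $\delta>1$ to be optimised. Hypothesis~(H3) then bounds the $L^1(\nu)$ error of this replacement by $C r_n^{\sigma\delta}$, and this is the non-uniformly hyperbolic analogue of the H\"older application used in the $L^{1+\delta}$--density case. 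In particular, when (H1s) is invoked with $\varphi_1=\varphi_2=\varphi_n^{\pm}$, both Lipschitz norms contribute the factor $r_n^{-\delta}$, which is the reason why the loss $\kappa$ in the conclusion now scales like $1/(\zeta\sigma)$ (respectively $1/\beta$), rather than $1/(\zeta\delta)$ as in the non-uniformly expanding case.

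With these ingredients in place, I would carry out the estimates separately for the two cases. In case~(1), stretched-exponential correlation decay $\Theta(n)=O(\theta_0^{n^\beta})$ together with $\tilde g(n)\sim (\log n)^\gamma$ and $\gamma\beta>1$ makes the mixing error summable faster than any power of $\log n$, so it is dominated by the recurrence contribution $\nu(E_n)=O((\log n)^{-\alpha})$ from~(H2b); the Lipschitz-norm blow-up $r_n^{-\delta}$ only costs a $(\log n)^\kappa$ factor with $\kappa>C_2/\beta$ determined by the requirement that $\Theta(\tilde g(n))\cdot r_n^{-2\delta}$ still decays. In case~(2), polynomial correlation decay $\Theta(n)=O(n^{-\zeta})$ combined with $\tilde g(n)\sim n^\gamma$ and (H2a) produces the two competing terms $n^{-1/2+\kappa}$ (from the gap/blocking error at block length $\sqrt n$) and $n^{-\alpha+\kappa}$ (from the short-return error), giving exactly the displayed bound, with $\kappa>C_3/(\zeta\sigma)$ dictated by the same Lipschitz/annular trade-off.

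The main obstacle, and the only genuinely new analytical point, is calibrating $\delta$ (the annular width exponent) so that (a)~the bias from replacing the indicator by a Lipschitz bump is negligible against the target rate, (b)~the Lipschitz norm $r_n^{-\delta}$ stays compatible with $\Theta(\tilde g(n))$, and (c)~the resulting loss $\kappa$ is arbitrarily small. The tension is that (H3) is only a $\nu$-a.e. statement with $\sigma$ depending on $\tilde x$, so the argument must be carried out along a full-measure set of base points; I would handle this by invoking (H3) on the exceptional set where a usable $\sigma$ exists, exactly as local-dimension arguments are typically handled for SRB measures on Young towers.
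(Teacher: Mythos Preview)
Your overall strategy---blocking argument plus Lipschitz smoothing of the ball indicator, with (H3) controlling the annular bias---is correct for one of the two functions appearing in the correlation estimate, and this is indeed what the paper does for $\Phi=1_{\{X_1>u_n\}}$. But there is a genuine gap in how you propose to handle the mixing term $\gamma(n,t)$.

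Recall that
\[
\gamma(n,t)=\bigl|\nu(X_1>u_n,\,M_{t,l}<u_n)-\nu(X_1>u_n)\,\nu(M_l<u_n)\bigr|,
\]
so the second observable is $\Psi_{0,l}=1_{\{X_1\le u_n,\dots,X_l\le u_n\}}$ with $l\sim p=\sqrt n$. Under (H1s) you need \emph{both} test functions to be Lipschitz. Your proposal says ``invoke (H1s) with $\varphi_1=\varphi_2=\varphi_n^{\pm}$'', but $\Psi_{0,l}$ is not $\varphi_n^{\pm}$: it is a product of $l$ pull-backs of ball indicators. A naive Lipschitz approximation of $\Psi_{0,l}$ has Lipschitz norm of order $\|Df^{l}\|_{\infty}\cdot r_n^{-\delta}$, which for hyperbolic $f$ blows up exponentially in $l\sim\sqrt n$ and destroys the estimate. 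This is exactly the obstruction that distinguishes the hyperbolic case from the expanding case, and your write-up does not address it.

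The paper's fix (Proposition~\ref{prop.extreme1}, Case~3, via Lemmas~\ref{lem:annulus1} and~\ref{lemma:dun-prelim}) is to exploit the Young tower structure rather than Lipschitz-approximate $\Psi_{0,l}$. One replaces $\tilde\Psi_{j,l}$ by a function $\overline\Psi_{j,l}$ that is constant along local stable manifolds; the set where the two differ is contained in $\bigcup_{k}f^{-k}(B_{u_n,k})$, and the exponential contraction of stable leaves together with (H3) gives $\nu\{\overline\Psi\neq\tilde\Psi\}=O(\tau_1^{\alpha_j})$. Only \emph{then} is $\Phi$ approximated by a Lipschitz bump $\Phi_B$ (your argument), yielding
\[
\gamma(n,t)\le O(1)\max\{\Theta(t/2)^{\tilde\sigma},\,\tilde\tau^{\,t}\}.
\]
So (H3) enters twice: once for the stable-leaf replacement of $\Psi$ and once for the annular bias of $\Phi_B$. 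Your proposal captures only the second use.

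A secondary omission: for the short-return term you invoke Proposition~\ref{prop.extreme2} but do not mention that in the SRB setting its proof must be redone with the Besicovitch maximal function relative to $\nu$ (not Lebesgue), and that comparing $\nu(B(\tilde x,4w_k))$ with $\nu(B(\tilde x,w_k))$ requires an iterated application of Lemma~\ref{lem.Bes}. This is why the paper loses more in the exponent ($\tilde\alpha$ close to $\alpha-5$ rather than $\alpha-1$) in the hyperbolic case; your sketch gives no mechanism for this comparison when $\nu$ has no density.
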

We make several remarks on this theorem. The first remark concerns
the sequence $u_n$ and whether we have convergence to EVD.
From the definition of local dimension, we know that $\forall\epsilon>0$:
\begin{equation}\label{eq.srb-dim}
r^{d_{\nu}-\epsilon}\leq\nu(B(x,r))\leq r^{d_{\nu}+\epsilon}.
\end{equation}
This is the best that can be achieved, and is a weaker statement than 
achieving an asymptotic of the form $\nu(B(x,r))\sim \ell(r)r^{d_{\nu}}$ 
(for some slowly varying function $\ell(r)$). Hence for linear sequences 
of the form $u_n=u/a_n+b_n$, the function $G_{\sqrt{n}}(u)$ 
need not converge to one of the standard EVD types I-III, see \cite[Section 1.6]{Leadbetter}. 
To get convergence to EVD For the observable $\phi(x)=-\log(\dist(x,\tilde{x}))$ then $u_n$ will be some
(non-linear) sequence satisfying:
$$u_n\in\left[\frac{(1-\epsilon)}{d_{\nu}}(u+\log n),\frac{(1+\epsilon)}{d_{\nu}}(u+\log n)\right].$$  
We remark that this theorem is based upon the definition of the recurrence
set $E_n$ and the asymptotic properties of $\nu(E_n)$. These properties should 
be contrasted to the \emph{short return time} (SRT) conditions that 
form the basis of the results presented \cite[Section 5]{holland-nicol},
and in \cite{CC,HNPV,Haydn-Wasilewska}. The link between assumptions (H2a)-(H2c) 
and the SRT conditions will be discussed further in Section \ref{sec.SRT}. 

Theorem \ref{thm.extreme2} is proved in Section \ref{sec.thm.proofs},
and is applicable to hyperbolic billiards, Lozi maps, solenoid maps, and
certain non-uniformly hyperbolic dynamical systems such as the H\'enon map.
These examples have been discussed on a case-by-case basis in the aforementioned references.
However, Theorem \ref{thm.extreme2} builds upon these works in the case of weak quantitative recurrence 
statistics. To keep the exposition simple, we did not include a precise statement in the case of assumption (H2c), 
but an analogous statement applies. In fact, based upon a recent result of \cite{Licheng}, we can deduce an estimate 
on the convergence rate to EVD for the family of two-dimensional Poincar\'e return maps associated to 
the geometric Lorenz flow, \cite{GW}. 
For such a system $(f,\XX,\nu)$, the set
$\XX$ is a compact planar section in $\mathbb{R}^2$ (transverse to the Lorenz flow), and $\nu$ is an ergodic SRB measure. The hyperbolic properties of these maps are described in \cite{GP}, and due to the existence of a strong
stable foliation the dynamics in large part can be reduced to the one-dimensional Lorenz map discussed
in Section \ref{sec.nue}. In 
\cite{Licheng} it is shown that Condition (H2c) and Condition (H3) holds. Hence we have the following:
\begin{thm}\label{thm.lorenz2}
Suppose that $(f,\XX,\nu)$ is the family of Poincar\'e return maps associated to the geometric Lorenz flow
as described in \cite{Licheng}. Suppose that 
$\phi(x)=-\log(\dist(x,\tilde{x}))$, and $u_n:=u_n(u)$ is a sequence such that 
$\limsup_{n\to\infty}n\nu\{\phi(x)>u_n\}=\tau(u)<\infty.$
Then there exists $\alpha>0$ such that for $\nu$-a.e. $x\in\XX$ we have:
\[
\left| \nu\{M_n\leq u_n\}- G_{\sqrt{n}}(u)\}\right|\leq 
O(1)\exp\{-(\log n)^{\alpha}\}.
\]
\end{thm}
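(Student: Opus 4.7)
My plan is to realize the estimate as an application of Theorem~\ref{thm.extreme2} in the (H2c) regime --- the variant that the excerpt explicitly notes ``an analogous statement applies'' for, obtained by running the blocking argument of Section~\ref{sec.general} with the improved recurrence bound, in the same manner that Theorem~\ref{thm.stretched} converts a stretched-logarithmic recurrence input into an $\exp\{-(\log n)^{\tilde\beta}\}$ error rate.

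First I would assemble the structural input for $(f,\XX,\nu)$. Because the geometric Lorenz flow admits a strong stable foliation on the cross-section, the two-dimensional Poincar\'e return map factors over a one-dimensional expanding Lorenz map of the type studied in Section~\ref{sec.nue} (satisfying (L1)--(L3)), and the SRB measure $\nu$ projects onto the absolutely continuous invariant measure of that factor. This yields a Young-tower model with exponential return-time tail, so assumption (H1s) holds with $\Theta(n)=O(\theta_0^n)$ for some $\theta_0<1$; in particular we may take $\beta=1$, and the compatibility $\gamma\beta>1$ required alongside (H2c) is automatic for any $\gamma>1$. The local dimension $d_\nu$ exists $\nu$-almost everywhere via the standard SRB theory for Young towers. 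The two remaining hypotheses --- (H2c) and (H3) --- are precisely the content extracted from \cite{Licheng}.

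Next, with (H1s), (H2c), (H3), and the assumed $\limsup_n n\nu\{\phi>u_n\}=\tau(u)<\infty$ in hand, I would replay the proof of Theorem~\ref{thm.extreme2}(1) with (H2c) replacing (H2b). The blocking decomposition of $\mathcal{B}_1(n)$ contributes two kinds of error: a recurrence piece scaling (up to an annular buffer estimated via (H3)) with $\nu(E_n)\leq C\exp\{-(\log n)^{\alpha_0}\}$, and a mixing piece bounded by $\tilde g(n)\,\Theta(\tilde g(n))=O((\log n)^{\gamma}\theta_0^{(\log n)^{\gamma}})$. Since $\gamma>1$ the mixing piece is super-polynomially small, so the recurrence piece dominates, and after absorbing the polynomial losses coming from approximating $1_{B(\tilde x,r)}$ by a Lipschitz bump one obtains the stated $O(1)\exp\{-(\log n)^{\alpha}\}$ bound for some $\alpha<\alpha_0$.

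The hard part is the bookkeeping in this last step: checking that in the (H2c) regime, the penalties coming from replacing the $L^{1+\delta}$ density input of Theorem~\ref{thm.extreme} by the weaker annular regularity (H3), together with the Lipschitz smoothing inherent in (H1s), do not overwhelm the sub-polynomial decay of $\nu(E_n)$, and that the resulting exponent $\alpha$ remains strictly positive. Everything else is either already contained in the excerpt (Theorems~\ref{thm.extreme2} and~\ref{thm.stretched}) or supplied by \cite{Licheng}.
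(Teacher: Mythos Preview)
Your proposal is correct and follows essentially the same route as the paper: verify (H1s) with exponential $\Theta(n)$, (H2c), and (H3) for the two-dimensional Lorenz Poincar\'e map via \cite{Licheng}, then rerun the blocking argument behind Theorem~\ref{thm.extreme2} with (H2c) in place of (H2b), so that the mixing term $\gamma(n,t)$ decays super-polynomially along $t=(\log n)^{\tilde\gamma}$ while the recurrence term from Proposition~\ref{prop.extreme2}(3) supplies the dominant $\exp\{-(\log n)^{\alpha}\}$ bound. The paper's proof is slightly more explicit in invoking Propositions~\ref{prop.extreme1} and~\ref{prop.extreme2} directly rather than rephrasing the Lipschitz-approximation and annular losses, but the logic is identical.
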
 
The proof of this result follows directly from the proof Theorem \ref{thm.extreme2} with condition (H2b) replaced by (H2c).
We conjecture that this error estimate is sub-optimal and can be replaced an estimate of the form $O(n^{-\alpha})$
for some $\alpha>0$, see Section \ref{sec.lorenz}.


\section{General convergence estimates using blocking arguments and proof
of main theorems}\label{sec.general}
In this section we describe the theoretical basis for our choice of 
recurrence set $E_n$, and show how convergence to an EVD follows from the 
specific asymptotic properties of $\nu(E_n)$ along with the assumptions placed 
on the rate of mixing, and on the regularity of the invariant density. This
information will be specified by a blocking argument approach together with 
Propositions \ref{prop.blocking}-\ref{prop.extreme2} given below. We then show how the main theorems stated
in Section \ref{sec.statement.results} follow from these results. The propositions will be proved
in Section \ref{sec.proofs}. 

\subsection{The blocking argument and key estimates on convergence to an EVD}
\label{sec.props.errors}
We begin by giving an overview of the blocking algorithm as used 
in \cite{Collet, holland-nicol}. In the following, we give a precise 
quantification of the error rate in terms of the assumptions on the
correlation decay $\Theta(j)$, the decay of $\nu(E_n)$ and the regularity of 
$\nu$.  To state the propositions, we fix integers $p(n),q(n)>0$ and let $n=pq+r$ with $0\leq r<p$ (by Euclid's division algorithm). The blocking argument consists of choosing $q(n)$ blocks of length $p(n)$ with
$n\sim p(n)q(n)$. Between each of the blocks we take a gap of length $t=g(n)$. In particular
we choose $g(n)=o(p(n))$ and maintain the aymptotic $n\sim (p+t)q$.
The decay of correlations over the gap of length $t=g(n)$ allows us to consider successive blocks as approximately independent.  We suppose that 
$p,q\to\infty$ as $n\to\infty$. We let $u_n$ be the sequence with the 
property that $n\nu\{X_1>u_n\}\to\tau(u)$, for some function $\tau(u)$. 
At this stage we do not assume that $u_n$ has the representation $u/a_n+b_n$. 
We will assume that $\phi(x)$ has the representation 
$\phi(x)=\psi(\dist(x,\tilde{x}))$, for a monotonically decreasing function 
$\psi:[0,\infty)\to\mathbb{R}$. In particular we assume that $\psi(y)$ 
takes its maximum at $y=0$. We also write
$$M_{j,l}=\max\{X_{j+1},X_{j+2},\ldots, X_{j+l}\},\;\textrm{and}\;M_{0,l}=M_l.$$
For any integers $t,l,n$ we define the quantity
\begin{equation}
\gamma(n,t):=|\nu(X_1>u_n,M_{t,l}<u_n)-\nu(X_1>u_n)\nu(M_l<u_n)|.
\end{equation}
In the definition above we suppress the dependence on $l$ as it will not feature significantly in the estimates.
We have the following proposition:
\begin{prop}\label{prop.blocking}
Suppose that $f: \XX \to \XX$ is  ergodic with respect to an SRB
measure $\nu$. Then for $\nu$-a.e. $\tilde{x}\in\XX$, all $p,q$ such that $n=pq+r$, and $t<p$, we have
\begin{equation}
\left|\nu\{M_n\leq u_n\}-(1-p\nu\{X_1>u_n\})^q\right|\leq\mathcal{E}_n,
\end{equation}
where:
\begin{equation}\label{eq.mainerror}
\mathcal{E}_n =O(1)\left\{
\max\{qt,p\}\nu\{X_1\geq u_n\}+pq\gamma(t,n)+pq\sum_{j=2}^{p}\nu(X_1>u_n,X_j>u_n)\right\}.
\end{equation}
\end{prop}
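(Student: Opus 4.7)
The plan is to prove the estimate by an iterative blocking procedure in which one block of length $p$ is split off from the global maximum at a time; at each iteration three kinds of error—gap removal, mixing decorrelation, and a Bonferroni correction—are paid, and these collect into the three terms comprising $\mathcal{E}_n$.

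I would first carry out a single iteration in detail. Write $\{M_n\le u_n\}=A\cap G\cap B$, where $A=\{M_p\le u_n\}$ is the first block, $G=\{X_{p+1},\dots,X_{p+t}\le u_n\}$ is the gap, and $B=\{M_{p+t,\,n-p-t}\le u_n\}$ is the contiguous residual max event. A union bound and stationarity yield $|\nu(A\cap B)-\nu\{M_n\le u_n\}|\le t\,\nu\{X_1>u_n\}$, which absorbs the gap. To decorrelate $A$ from $B$, expand $A^c=\bigcup_{j=1}^p\{X_j>u_n\}$ by Bonferroni: the second-order correction is bounded by $\sum_{1\le i<j\le p}\nu(X_i>u_n,X_j>u_n)\le p\sum_{j=2}^p\nu(X_1>u_n,X_j>u_n)$ via stationarity, while for each of the $p$ indices $j\in\{1,\dots,p\}$ the mixing hypothesis furnishes $|\nu(X_j>u_n,B)-\nu(X_j>u_n)\nu(B)|\le\gamma(n,t)$, since $B$ begins at least $t$ time steps after $j$. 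Together these give
\[\nu(A\cap B)=\nu(A)\nu(B)+O\bigl(p\gamma(n,t)+p\textstyle\sum_{j=2}^p\nu(X_1>u_n,X_j>u_n)\bigr).\]

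Iterating this decomposition on $B$ a total of $q-1$ times, and removing the terminal remainder of length $r<p$ at cost $O(p\,\nu\{X_1>u_n\})$, produces
\[\bigl|\nu\{M_n\le u_n\}-\nu\{M_p\le u_n\}^q\bigr|=O\bigl(\max\{qt,p\}\nu\{X_1>u_n\}+pq\,\gamma(n,t)+pq\textstyle\sum_{j=2}^p\nu(X_1>u_n,X_j>u_n)\bigr).\]
A final intra-block Bonferroni yields $\nu\{M_p\le u_n\}=1-p\nu\{X_1>u_n\}+O(p\sum_{j=2}^p\nu(X_1>u_n,X_j>u_n))$, and the elementary inequality $|a^q-b^q|\le q|a-b|$ for $a,b\in[0,1]$ transports this error to the $q$-th power, contributing an additional $O(pq\sum_{j=2}^p\nu(X_1>u_n,X_j>u_n))$ term. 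Summing all three contributions matches $\mathcal{E}_n$ exactly.

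The delicate point is the mixing step. The hypothesis $\gamma(n,t)$ is phrased between a single event $\{X_1>u_n\}$ and a single \emph{contiguous} max event $\{M_{t,l}<u_n\}$, so one must ensure that the event playing the role of the future in each mixing invocation really is a contiguous max. The iterative splitting above is designed precisely so that at the moment $\gamma$ is applied, the residual event $B$ is a contiguous max event with starting index at least $t$ after the current $X_j$; this sidesteps the need to bound mixing against a non-contiguous intersection of block-max events. The factor $p$ in $pq\,\gamma(n,t)$ arises from the $p$ indices at which Bonferroni is expanded in each iteration, and the factor $q$ from the number of iterations.
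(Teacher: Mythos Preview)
Your proposal is correct and follows essentially the same blocking argument the paper sketches (citing \cite{Collet, FF, holland-nicol}): iteratively peel off a block of length $p$ plus a gap of length $t$, pay the gap-removal cost $t\,\nu(X_1>u_n)$, use Bonferroni on the block complement, and invoke the $\gamma(n,t)$ bound for each of the $p$ single-index events against the contiguous residual max. The only cosmetic difference is that the paper iterates directly with the factor $(1-p\nu(X_1>u_n))$ via the one-step recursion $|\nu(M_{l(p+t)}\le u_n)-(1-p\nu(X_1>u_n))\nu(M_{(l-1)(p+t)}\le u_n)|\le\tilde\Gamma_n$, whereas you first reduce to $\nu(M_p\le u_n)^q$ and then convert at the end; both routes yield the same $\mathcal{E}_n$.
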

The proof this proposition is purely probabilistic and the details
can be found in \cite{Collet, FF, holland-nicol}. In particular 
following \cite{Collet} and using asymptotic independence of blocks
of length $p$:
\begin{gather}
|\nu (M_n \le u_n ) - \nu(M_{q(p+t)}\le u_n)|\le \max\{qt,p\}\nu(X_1>u_n),\\
|\nu (M_{l(p+t)} \le u_n )- (1-p\nu(X_1>u_n))\nu(M_{(l-1)(p+t)}\le u_n)|\leq\tilde\Gamma_n,
\quad(l\in[1,q]),
\end{gather}
where
\[
\tilde\Gamma_n=
p\gamma(t,n)+2p\sum_{i=2}^{p}\nu(X_1>u_n, X_i>u_n)+t\nu(X_1>u_n).
\]
From this we deduce that 
\[
|\nu (M_n \le u_n ) - (1-p \nu (X_1 >u_n))^q|\le\max\{q,p/t\} \Gamma_n
\]
where
\[
\Gamma_n
=
\tilde\Gamma_n+t\nu (X_1>u_n).
\]
We remark that in the definition of $\tilde{\Gamma}_n$, the second summation 
can be bounded as follows:
\begin{equation}\label{eq.Gamma.split}
\begin{split}
\sum_{i=2}^{p}\nu(X_1>u_n, X_i>u_n) &\leq\sum_{i=2}^{t}\nu(X_1>u_n, X_i>u_n)+
p(\nu\{X_1>u_n\})^2\\
&\qquad+(p-t)\mathcal{C}_{\nu}\left(1_{\{X_1>u_n\}},1_{\{X_1>u_n\}}\circ 
f^t\right),
\end{split}
\end{equation}
where $\mathcal{C}_{\nu}(\varphi_1,\varphi_2)$ denotes the correlation 
between $\varphi_1,\varphi_2$ (with
respect to $\nu$). We use conditions (H2a)-(H2c) to estimate the sum on 
the right, and condition (H1) for the bounding the correlation term. 
To bound the correlation function we will use the same argument as
applied to bounding $\gamma(n,t)$.
We now use the dynamical assumptions to get refined asymptotics on each of the terms.
The first proposition gives an estimate for $\gamma(n,t)$.
For non-uniformly expanding maps an estimate is established
in \cite{holland-nicol}. We give a careful quantification in the non-uniformly
hyperbolic case, and in situations where the invariant density need not
lie in $L^p$.
\begin{prop}\label{prop.extreme1}
Suppose that $f: \XX \to \XX$ is  ergodic with respect to an SRB
measure $\nu$, and the local dimension $d_{\nu}$ exists for $\nu$-a.e. 
$x\in\XX$. We consider the following cases:
\begin{enumerate}
\item Suppose that $(f,\XX,\nu)$ is non-uniformly expanding and 
$\nu$ has a density $\rho\in L^{1+\delta}(m)$ for some 
$\delta>0$. Suppose that (H1) holds. Then there exists $\delta_1>0$ (depending on
$\delta$) such that 
\begin{equation}
\gamma(n,t)\leq O(1)\Theta(t)^{\delta_1}.
\end{equation}
\item Suppose that $(f,\XX,\nu)$ is non-uniformly expanding and 
there exists $\delta\in(0,1)$ such that for any Lebesgue measurable set $A$ 
we have $\nu(A)\leq O(1)\exp\{-|\log m(A)|^{\delta}\}$. If in addition
(H1) holds then there exists $\delta_1<1$ such that:
\begin{equation}
\gamma(n,t)\leq O(1)\exp\{-|\log\Theta(t)|^{\delta_1}\}.
\end{equation}
\item 
Suppose that $(f,\XX,\nu)$ is non-uniformly hyperbolic and 
conditions (H1s) and (H3) hold. Then there exist $\tilde\tau<1$ and
$\tilde\sigma>0$ such that
\begin{equation}
\gamma(n,t)\leq O(1)\max\{\Theta(t/2)^{\tilde\sigma},\tilde{\tau}^{t}\}.
\end{equation}
\end{enumerate}
\end{prop}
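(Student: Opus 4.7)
The plan is to interpret $\gamma(n,t)$ as an (approximate) correlation between the indicator of a small ball and the indicator of an $l$-fold intersection of preimages, and then to apply the decay-of-correlation hypotheses after replacing these indicators by Lipschitz cut-offs. Since $\phi=\psi(\dist(\cdot,\tilde x))$ with $\psi$ monotonically decreasing, $\{X_1>u_n\}=B(\tilde x,r_n)$ with $r_n=\psi^{-1}(u_n)$, and by $f$-invariance
\[
\gamma(n,t)=\left|\int \mathbf{1}_A\cdot(\mathbf{1}_B\circ f^t)\,d\nu-\int\mathbf{1}_A\,d\nu\int\mathbf{1}_B\,d\nu\right|,
\]
where $A=B(\tilde x,r_n)$ and $B=\{M_l<u_n\}$. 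I will replace $\mathbf{1}_A$ by a Lipschitz cut-off $\tilde\varphi_\epsilon$ supported in $B(\tilde x,r_n+\epsilon)$, equal to $1$ on $B(\tilde x,r_n)$, with $\|\tilde\varphi_\epsilon\|_{\mathrm{Lip}}=O(1/\epsilon)$. The approximation error $|\int(\mathbf{1}_A-\tilde\varphi_\epsilon)(\mathbf{1}_B\circ f^t)\,d\nu|$ is controlled by the $\nu$-measure of the annulus $B(\tilde x,r_n+\epsilon)\setminus B(\tilde x,r_n)$, and similarly for the product term; the final bound then comes from balancing this against the correlation contribution $O(\Theta(t)/\epsilon)$.

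For Part 1, the density bound $\rho\in L^{1+\delta}$ combined with Hölder's inequality gives $\nu(\text{annulus of width }\epsilon)\leq O(1)\,\epsilon^{\delta/(1+\delta)}$, after absorbing the $r_n^{d-1}$ volume factor into the constant. Since $\mathbf{1}_B\in L^\infty$, assumption (H1) applies directly with $\varphi_1=\tilde\varphi_\epsilon$ and $\varphi_2=\mathbf{1}_B$, producing a correlation contribution $O(\Theta(t)/\epsilon)$. Optimising $\epsilon$ to balance $\Theta(t)/\epsilon$ against $\epsilon^{\delta/(1+\delta)}$ yields $\gamma(n,t)=O(\Theta(t)^{\delta_1})$ with $\delta_1=\delta/(1+2\delta)$. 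Part 2 is analogous: the log-regularity on $\nu$ gives $\nu(\text{annulus})\leq O(1)\exp\{-|\log\epsilon|^{\delta}\}$, and optimising $\Theta(t)/\epsilon+\exp\{-|\log\epsilon|^\delta\}$ via a choice $|\log\epsilon|\asymp|\log\Theta(t)|^{\delta_1/\delta}$ with $\delta_1<1$ produces the stated bound $\exp\{-|\log\Theta(t)|^{\delta_1}\}$.

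The main obstacle is Part 3, because (H1s) requires \emph{both} $\varphi_1$ and $\varphi_2$ to be Lipschitz while $\mathbf{1}_B$ is the indicator of $\bigcap_{j=0}^{l-1} f^{-j}(\XX\setminus B(\tilde x,r_n))$, which is far from Lipschitz; a crude cut-off of the function $x\mapsto\min_{0\le j<l}\dist(f^j x,\tilde x)$ has Lipschitz norm $O(\|Df\|_\infty^l/\epsilon_2)$, catastrophic in $l$. The plan is to split the gap, writing $\mathbf{1}_B\circ f^t=(\mathbf{1}_B\circ f^{t/2})\circ f^{t/2}$ and applying (H1s) over the second interval of length $t/2$. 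The function $\mathbf{1}_B\circ f^{t/2}$ will be approximated by a Lipschitz function built from a Markov partition refined under $f^{t/2}$, whose "boundary" set has $\nu$-measure decaying exponentially at a rate $\tilde\tau^t$ extracted from the uniform hyperbolic contraction on stable manifolds. For $\mathbf{1}_A$, the annular error is controlled by (H3): taking $\epsilon=r_n^{\delta_0}$ with $\delta_0>1$ gives $|\nu(B(\tilde x,r_n+\epsilon))-\nu(B(\tilde x,r_n))|\leq Cr_n^{\sigma\delta_0}$. Plugging these into (H1s) and optimising (choosing $r_n^{\sigma\delta_0}$ to match $\Theta(t/2)/\epsilon$) yields $\gamma(n,t)=O(1)\max\{\Theta(t/2)^{\tilde\sigma},\tilde\tau^t\}$ for suitable $\tilde\sigma>0$ and $\tilde\tau<1$. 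The delicate point in this last step is ensuring that the implicit Lipschitz constants do not reintroduce exponential-in-$l$ losses, which is why the argument is carried out at time $t/2$ rather than $t$.
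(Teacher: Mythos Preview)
Your treatment of Cases 1 and 2 is correct and essentially identical to the paper's: approximate $\mathbf{1}_A$ by a Lipschitz bump, apply (H1) against $\mathbf{1}_B\in L^\infty$, bound the annular error either by H\"older (Case 1) or by the log-regularity assumption (Case 2), and optimise. The paper records exactly the same computation for Case 2 (choosing $\hat\lambda=\Theta(t)\exp\{C_d|\log\Theta(t)|^{\delta}\}$) and defers Case 1 to \cite{holland-nicol}.

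For Case 3 your diagnosis of the difficulty is right, and the time-splitting $t\mapsto t/2$ is the correct move, but the mechanism you propose for handling $\mathbf{1}_B\circ f^{t/2}$ is not the one that actually works, and as written it does not close. Approximating $\mathbf{1}_B\circ f^{t/2}$ by a function constant on elements of a Markov partition refined under $f^{t/2}$ does not give you a Lipschitz function: a piecewise-constant approximation is still discontinuous, and smoothing it over the boundaries of the refined rectangles reintroduces a Lipschitz norm of order $\|Df\|_\infty^{t/2}$, which is precisely the blow-up you are trying to avoid. So you cannot feed this into (H1s) as stated.

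What the paper does instead is to exploit the Young tower structure rather than a bare Markov partition. One fixes a reference unstable leaf $\hat\gamma^u\subset\Lambda$ and replaces $\Psi_{j,l}$ by the function $\overline{\Psi}_{j,l}(x):=\Psi_{j,l}(\hat x)$ obtained by sliding along the local stable manifold to $\hat\gamma^u$. This $\overline{\Psi}$ is constant on stable leaves, so it descends to the quotient (expanding) tower, where decay of correlations against Lipschitz observables is available with the rate $\Theta(j-\alpha_j)$; this is what the cited Lemma from \cite{GHN} provides, and it is a genuinely stronger statement than the Lipschitz-versus-Lipschitz hypothesis (H1s). The error $\nu\{\overline{\Psi}_{\alpha_j,l}\neq\Psi_{\alpha_j,l}\}$ is controlled by observing that the discrepancy set sits inside $\bigcup_{k\ge\alpha_j} f^{-k}(B_{r,k})$, where $B_{r,k}=\{x:f^k(\gamma^s(x))\cap\partial B(\tilde x,r)\neq\emptyset\}$; exponential contraction of $\gamma^s$ under $f^k$ together with (H3) gives $\nu(B_{r,k})\le C\tau_1^k$ and hence a total error $O(\tau_1^{\alpha_j})$. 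One then takes $\alpha_j=j/2$, approximates $\Phi=\mathbf{1}_{\{X_1>u_n\}}$ by a Lipschitz $\Phi_B$ as you do, uses (H3) to bound $\|\Phi-\Phi_B\|_1\le\lambda^{\sigma_1}$, and arrives at
\[
\gamma(n,j)\le O(1)\bigl(\tau_1^{j/2}+\lambda^{\sigma_1}+\lambda^{-1}\Theta(j/2)\bigr),
\]
which after optimising in $\lambda$ gives the stated $\max\{\Theta(t/2)^{\tilde\sigma},\tilde\tau^{t}\}$. The key point you are missing is that one does not try to make $\overline{\Psi}$ Lipschitz at all; one uses the stable-leaf quotient to bypass the Lipschitz requirement on the second observable.
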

The following proposition gives a quantification of the first right hand term 
of equation \eqref{eq.Gamma.split} in terms of $\nu(E_n)$.
\begin{prop}\label{prop.extreme2}
Suppose that $f: \XX \to \XX$ is ergodic with respect to an SRB measure $\nu$, 
and the local dimension $d_{\nu}$ exists $\nu$-a.e. 
For given $\epsilon>0$ let $g(n)=\tilde{g}(n)^{1-\epsilon}$, where $\tilde{g}(n)$ is defined
in equation \eqref{eq:En}. Suppose that $\phi(x)=-\log(\dist(x,\tilde{x}))$, and $u_n$ is such that
$\limsup_{n\to\infty} n\nu\{\phi(x)>u_n\}<\infty$. We have the following.
\begin{enumerate}
\item Suppose that (H2a) holds. Then  there exists $\tilde\alpha>1$ such that for $\nu$-a.e. 
$\tilde{x}\in\mathcal{X}$:
\begin{equation}
\label{eq:dprime1}\sum_{j=2}^{g(n)}\nu(X_1>u_n,X_j>u_n)\leq C(\tilde{x}) 
\frac{g(n)}{n^{\tilde\alpha}}.
\end{equation}
\item 
Suppose that (H2b) holds for $\alpha>5$. Then there exists $\tilde\alpha>0$ such that for $\nu$-a.e. 
$\tilde{x}\in\mathcal{X}$:
\begin{equation}
\label{eq:dprime2}\sum_{j=2}^{g(n)}\nu(X_1>u_n,X_j>u_n)\leq C(\tilde{x}) 
\frac{g(n)}{n(\log n)^{\tilde\alpha}}.
\end{equation}
\item
Suppose that (H2c) holds. Then there exists $\tilde\alpha>0$ such that for $\nu$-a.e. 
$\tilde{x}\in\mathcal{X}$:
\begin{equation}
\label{eq:dprime3}\sum_{j=2}^{g(n)}\nu(X_1>u_n,X_j>u_n)\leq C(\tilde{x}) 
g(n)\exp\{-(\log n)^{\tilde\alpha}\}. 
\end{equation}
\end{enumerate}
In each case the constant $C$ depends on $\tilde{x}$, and $\tilde\alpha$ 
depends on $\alpha$ appearing in (H2a)-(H2c). 
\end{prop}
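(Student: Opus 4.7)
The strategy is to reduce each summand to a geometric intersection with the recurrence set $E_n$, then to control that intersection by a Fubini–plus–Borel–Cantelli argument over the location of $\tilde x$. Set $A_n := B(\tilde x, r_n)$ with $r_n = e^{-u_n}$; the hypothesis $\limsup n\nu(A_n) < \infty$ together with the existence of $d_\nu$ forces $r_n \asymp n^{-1/d_\nu}$ (up to a $\tilde x$-dependent constant). For $x \in A_n \cap f^{-(j-1)}A_n$ the triangle inequality gives $\dist(x, f^{j-1}x) \le 2r_n$, so selecting $n'$ comparable to $n$ such that $(n')^{-1/d} \ge 2r_n$ and $\tilde g(n') \ge g(n)$, we obtain $A_n \cap f^{-(j-1)}A_n \subseteq A_n \cap E_{n'}$ for every $j \in [2, g(n)]$. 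A crude union bound then yields
\[
\sum_{j=2}^{g(n)} \nu(X_1 > u_n, X_j > u_n) \le g(n) \cdot \nu(A_n \cap E_{n'}).
\]

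The crux of the proof is to improve the trivial estimate $\nu(A_n \cap E_{n'}) \le \min\{\nu(A_n), \nu(E_{n'})\}$ by recovering, for $\nu$-a.e. $\tilde x$, an extra factor comparable to $\nu(A_n) \asymp 1/n$. Averaging over $\tilde x$ and applying Fubini,
\[
\int_\XX \nu(B(\tilde x, r_n) \cap E_{n'})\, d\nu(\tilde x) = \int_{E_{n'}} \nu(B(x, r_n))\, d\nu(x).
\]
The inner factor $\nu(B(x, r_n))$ is bounded by $C n^{-\delta/(1+\delta)}$ via Hölder when $\rho \in L^{1+\delta}$ (the non-uniformly expanding case), or by a comparable pointwise quantity using (H3) combined with local dimension (the non-uniformly hyperbolic case). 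Multiplying by the decay of $\nu(E_{n'})$ supplied by (H2a), (H2b), or (H2c) produces an averaged bound summable along the dyadic subsequence $n_k = 2^k$; Markov's inequality and the Borel–Cantelli lemma then furnish a pointwise bound on $\nu(A_{n_k} \cap E_{n_k'})$ for $\nu$-a.e. $\tilde x$, and the monotonicity of $A_n$ (coupled with the slow growth of $E_{n'}$) extends this to all $n$.

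Substituting the three decay regimes for $\nu(E_{n'})$ back into the product $g(n) \cdot \nu(A_n \cap E_{n'})$ yields the three claimed bounds, with $\tilde \alpha$ depending on the corresponding $\alpha$ in (H2a)–(H2c) together with the density exponent $\delta$ and the slack $\epsilon$. The main obstacle is the third step: the weak-recurrence regimes (H2b) and (H2c) leave only a narrow margin in the Fubini–Borel–Cantelli estimate, which is why part (2) explicitly requires $\alpha > 5$ — enough slack must remain after the geometric-subsequence argument that the resulting $\tilde \alpha$ is positive. Carefully bookkeeping the exponents across the three disparate decay rates, and passing between the hyperbolic setting (where $d_\nu$ may differ from $d$) and the expanding setting (where $d_\nu = d$), is the technical heart of the argument.
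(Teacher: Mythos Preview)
Your high-level architecture --- reduce the summand to $\nu(A_n\cap E_{n'})$ via the triangle inequality, then run a Borel--Cantelli argument along a subsequence to upgrade an averaged bound to a pointwise one --- matches the paper. The gap is in the averaging step itself.

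When you integrate against $d\nu(\tilde x)$ and bound the inner factor by H\"older, you obtain
\[
\int_\XX \nu\bigl(B(\tilde x,r_n)\cap E_{n'}\bigr)\,d\nu(\tilde x)
=\int_{E_{n'}}\nu\bigl(B(x,r_n)\bigr)\,d\nu(x)
\le C\,n^{-\delta/(1+\delta)}\,\nu(E_{n'}).
\]
The target pointwise estimate is $\nu(A_n\cap E_{n'})\lesssim n^{-1}\cdot(\text{decay of }\nu(E_{n'}))$, so Markov leaves you with a bad-set measure of order $n^{1/(1+\delta)}$ times something that, under (H2b) or (H2c), decays only logarithmically. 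The sum over your dyadic subsequence therefore diverges, and the Borel--Cantelli step fails. Even under (H2a) you would need the extra hypothesis $\alpha>1/(1+\delta)$ to push $\tilde\alpha$ above $1$.

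The paper recovers the full factor $1/n$ by replacing your Fubini--Markov step with the Hardy--Littlewood maximal function of $\varphi_n=1_{E_n}\rho$: one has exactly
$\nu\bigl(B(\tilde x,r)\cap E_n\bigr)\le m\bigl(B(\tilde x,r)\bigr)\cdot\mathcal M_n(\tilde x)$,
and the weak-$(1,1)$ inequality gives $m\{\mathcal M_n>\lambda\}\le\nu(E_n)/\lambda$. Borel--Cantelli along $\alpha_n=e^{n^b}$ (or $n^b$ for (H2a)) then yields $\nu(A_n\cap E_{n'})\le C_d\,\alpha_n^{-1}\lambda_n$, with the ball-volume factor $\alpha_n^{-1}\approx 1/n$ intact. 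In the expanding case your argument can be repaired by integrating against $dm(\tilde x)$ rather than $d\nu(\tilde x)$, since then the inner factor becomes $m(B(x,r_n))=C_d r_n^d$ exactly; but in the hyperbolic case, where $\nu$ may be singular, the paper genuinely needs the Besicovitch maximal function relative to $\nu$ together with Lemma~\ref{lem.Bes} to compare $\nu(B(\tilde x,4w_k))$ with $\nu(B(\tilde x,w_k))$ along the interpolation --- and this last comparison is the source of the loss that forces $\alpha>5$ in part (2).
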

In the proof Proposition \ref{prop.extreme2} above, we distinguish 
between non-uniformly expanding systems and non-uniformly hyperbolic systems 
and optimize the constant $\tilde\alpha$ in each case. In particular the precise bound
on the optimal value of $\tilde\alpha$ will depend on $\alpha$. The constant
$\tilde\alpha$ is used for bounding the rate of convergence to an EVD.

\subsection{Proof of main theorems}\label{sec.thm.proofs}
Using the propositions stated in Section \ref{sec.props.errors} we show
how the main theorems stated in Section \ref{sec.statement.results} follow. We will defer
the proof of Theorem \ref{thm.stretched} to Section \ref{sec.proofstretched}
as the proof requires the actual estimation of both $\nu(E_n)$ and the
regularity of the invariant density.
 
\paragraph{Proof of Theorem \ref{thm.extreme}.}
We will take $q=p=\sqrt{n}$ in Proposition \ref{prop.blocking}
and take $t=g(n)=(\log n)^{\tilde\gamma}$ for some $\tilde\gamma<\gamma$ (maintaining $\beta\tilde\gamma>1$). 
In Case 1 of Theorem \ref{thm.extreme}, 
Proposition \ref{prop.extreme1} implies that (for some $c>0$):
$$\gamma(n,t)\leq O(1)\Theta((\log n)^{\tilde\gamma})^{\delta_1}
\leq O(1)\exp\{-c\delta_1(\log n)^{\beta\tilde\gamma}\}.$$
This term goes to zero at a superpolynomial rate provided 
$\beta\tilde\gamma>1$. By choice of 
$u_n$, $\nu\{X_1>u_n\}\leq O(1)n^{-1}$, and therefore the dominating term
comes from Case 2 of Proposition \ref{prop.extreme2}. If $\alpha$
is the constant in (H2b), then the proof of Proposition \ref{prop.extreme2}
gives $\tilde\alpha<\alpha-1$. Since we must $\tilde\gamma>\beta^{-1}$ the bound on $\mathcal{E}_n$
(and hence that in equation \eqref{eq.thm1.h2b}) follows. The Case 2 of Theorem \ref{thm.extreme} is similar. 
In this case the dominating term comes from Case 3 of Proposition \ref{prop.extreme2}
with $\tilde\alpha$ chosen to be any constant less than $\alpha-1$.
\paragraph{Proof of Theorem \ref{thm.lorenz1}.}
The proof is straightforward in light of the proof of 
Theorem \ref{thm.extreme}, Case 2. For one-dimensional Lorenz maps it is proved 
that (H2c) holds for some $\alpha\in(0,1)$ and $\gamma=5$, see \cite{GHN}.
Moreover the invariant density $\rho$ is of bounded variation type, 
and hence in $L^{\infty}$. Hence this establishes: 
$$\left|\nu\{M_n\leq u+\log n\}-G_{\sqrt{n}}(u)\right|\leq O(1)\exp\{-(\log n)^{\alpha_1}\},$$
for some $\alpha_1>0$.
Using the regularity of the invariant density $\rho$, it follows that for $\nu$-a.e. $\tilde{x}\in\XX$,
we have 
$$\tau_n(u)=n\nu\{\phi(x)>u+\log n\}=C(\tilde{x})e^{-u}+O(1/n),$$
and hence $G_{\sqrt{n}}(u)\to G(u)$ up to an error of order $1/\sqrt{n}$ (which gives an
insignificant contribution). Hence we get the required convergence to the Gumbel distribution
as stated in the theorem.

\paragraph{Proof of Theorem \ref{thm.partial}.}
Proof of convergence to an EVD (without an error bound) was established in
\cite{Gupta}. The proof uses a blocking argument approach, and in fact
Proposition \ref{prop.blocking} applies to this system. For partially
hyperbolic systems, the quantitative recurrence statistics assumptions
are phrased in terms of $E^{X}_n$. If we let
$$E_{n}:=\left\{(x,\theta)\in X\times Y:\,d_X(f^j(x,\theta),(x,\theta))
<n^{-1/d},\;\textrm{some}\;j\in[1,\tilde{g}(n)]\right\},$$
then by \cite[Proposition 3.4]{Gupta}, it is shown for some $C>0$ that
$\nu(E_{n})\leq C\nu_X(E^{X}_{n'})$, with $n'=n^{d_X/d}$. Consider now Case 1 
of Theorem 
\ref{thm.partial}. We will take $q=p=\sqrt{n}$ in Proposition 
\ref{prop.blocking} and take $t=g(n)=(\log n)^{\tilde\gamma}$ for some $\tilde\gamma<\gamma$. As in the
proof of Theorem \ref{thm.extreme}, $\gamma(n,t)$ tends to zero at
a superpolynomial rate provided we choose $\tilde\gamma$ so that $\beta\tilde\gamma>1$. If $\alpha$
is the constant in (H2b), then Proposition \ref{prop.extreme2}
implies that we can choose $\tilde\alpha$ arbitrarily close to $\alpha-1$. In Case 2, we take $q=p=\sqrt{n}$ in Proposition 
\ref{prop.blocking} but this time take $t=g(n)=n^{\kappa}$ for some $\kappa<\gamma$.
Proposition \ref{prop.extreme1} implies that
$$\gamma(n,t)\leq\Theta(n^{\kappa})^{\delta_1}
\leq O(1)n^{-\zeta\kappa\delta_1}.$$
If we are to have $\mathcal{E}_n\to 0$ then we require $pq\gamma(n,t)=o(1)$,
and hence $\kappa>(\zeta\delta_1)^{-1}$.  From Case 1 of
Proposition \ref{prop.extreme2} we get an error contribution of the order $n^{\alpha-\kappa}$
using (H2a). Combining these errors gives the contribution as stated in the theorem. Notice
that we require $(\zeta\delta_1)^{-1}<\alpha$ if we are to have $\mathcal{E}_n\to 0$.
In each case we have $\tau_n(u)\to\tau(u)=C(\tilde{x})e^{-u}$, and hence we get
convergence to the Gumbel distribution. However, the precise error rate involved 
depends on refined properties 
of the invariant density (such as having H\"older continuity).

\paragraph{Proof of Theorem \ref{thm.alvesviana}.}
The proof of this theorem combines that of Theorem \ref{thm.partial}
and Lemma \ref{lem.density} in Section \ref{sec.proofstretched}. The latter result is required since we do not
know apriori that the invariant density $\nu$ belongs to some $L^p$, 
for some $p>1$. For the Alves-Viana map, it is shown in \cite{gouezel}
that $\exists\theta_0,\beta<1$ such that $\Theta(n)\leq O(\theta^{n^{\beta}}_0)$.
Hence by Lemma \ref{lem.density}, there exists $\hat\beta>0$ such 
that for any measurable set $A\subset\XX$:
$$\nu(A)=O\left(\exp\{-c|\log m(A)|^{\hat\beta}\}\right).$$
We will take $q=p=\sqrt{n}$ in Proposition \ref{prop.blocking}
and take $t=g(n)=(\log n)^{\tilde\gamma}$. Proposition \ref{prop.extreme1} implies that
$$\gamma(n,t)\leq
O(1)\exp\{-|\log\Theta(t)|^{\delta_1}\}
\leq O(1)\exp\{-c\delta_1(\log n)^{\delta_1\tilde\gamma\beta}\}.$$
The constant $\delta_1$ depends on $\beta$.
If we take $\tilde\gamma$ so that $\tilde\gamma\beta\delta_1>1$, then $\gamma(n,t)$
goes to zero at a superpolynomial rate. Let us now estimate
the contribution to $\mathcal{E}_n$ coming from $\nu(E_n)$.
Since the base transformation of the Alves-Viana map is 
a uniformly hyperbolic Markov map it follows that
$\nu_X(E^{X}_n)\leq Cn^{-1}\tilde{g}(n)$, see for example \cite{holland-nicol}
for a similar calculation. Hence as in the proof of Theorem \ref{thm.partial}
we have $\nu(E_n)\leq Cn^{-1/2}$. By the choice of $g(n)=(\log n)^{\tilde\gamma}$ the
error estimate stated in Theorem \ref{thm.alvesviana} 
follows from Proposition \ref{prop.extreme2}. 

\paragraph{Proof of Theorem \ref{thm.extreme2}.}
We will take $q=p=\sqrt{n}$ in Proposition \ref{prop.blocking}
and take $t=g(n)=(\log n)^{\tilde\gamma}$ for some $\tilde\gamma>1$. 
Consider Case 1 of Theorem \ref{thm.extreme2}. Applying Case 3 of
Proposition \ref{prop.extreme1} we obtain
$$\gamma(n,t)\leq \max\{\Theta(t/2)^{\tilde\sigma},\tilde{\tau}^{t}\}
\leq O(1)\exp\{-c\delta_1(\log n)^{\beta\gamma}\}.$$
This term goes to zero at a superpolynomial rate provided 
$\beta\tilde\gamma>1$. As in the proof of Theorem \ref{thm.extreme}
the dominating term comes from Case 2 of Proposition \ref{prop.extreme2}. 
If $\alpha$ is the constant in (H2b), then the proof of Proposition 
\ref{prop.extreme2} (in the non-uniformly hyperbolic case) 
implies that we can take $\tilde\alpha$ arbitrarily close to $\alpha-5$. 
From this, the bound on $\mathcal{E}_n$ follows provided $\alpha$ is sufficiently large. 
Proof of Case 2 in Theorem \ref{thm.extreme2} follows similarly, and in particular 
see Remark \ref{rmk.h2a.hyp}.  
As in the proof of Theorem \ref{thm.partial} we get
convergence to the Gumbel distribution.
\paragraph{Proof of Theorem \ref{thm.lorenz2}.}
We will take $q=p=\sqrt{n}$ in Proposition \ref{prop.blocking}
and take $t=g(n)=(\log n)^{\tilde\gamma}$ for some $\tilde\gamma>1$. 
For two-dimensional Lorenz maps it is proved 
that (H2c) holds for some $\alpha\in(0,1)$ and $\gamma=2$, see \cite{Licheng}.
Moreover it is shown there that condition (H1s) holds with
$\Theta(n)\leq O(\theta^{n}_0)$ for some $\theta_0<1$, and condition (H3) holds
for some $\sigma>0$. by Proposition \ref{prop.extreme1}, it follows that for all $\epsilon>0$:
$$\gamma(n,t)\leq \max\{\Theta(t/2)^{\tilde\sigma},\tilde{\tau}^{t}\}
\leq O(1)\exp\{-(\log n)^{2-\epsilon}\},$$
and this term goes to zero at a superpolynomial rate. Using Case 3 of
Proposition \ref{prop.extreme2}, we find that there exists $\alpha_1>0$
such that $\mathcal{E}_n\leq O(1)\exp\{-(\log n)^{\alpha_1}\}$.

\section{Further remarks on quantitative recurrence, including flows and quasiperiodic 
systems}\label{sec.furtherdiscussion}
We have so far discussed the role of the set $E_n$ in estimating
the rate of convergence to an EVD for a broad class of hyperbolic dynamical
systems. In this section we give a precise link between $E_n$
and the notion of having \emph{short return times}. We then discuss
quantitative recurrence statistics for flows and also for quasi-periodic systems.

\subsection{On the link between $E_n$ and short return times}\label{sec.SRT}
For hyperbolic systems, we phrased our dynamical assumptions in terms
of the set $E_n$ and the asymptotics of $\nu(E_n).$ In this section,
we give a brief note on how these assumptions link to the 
\emph{short return time} conditions as presented in  
\cite{CC,HNPV,Haydn-Wasilewska,holland-nicol}. Motivated from
these references we give the following definition:
\begin{defn}\label{def.srt}
We say that the \emph{short return time} (SRT) condition holds
for $(f,\XX,\nu)$ if there is a set $\Lambda\subset\XX$,
constants $\gamma>1,C,\alpha,s>0$ with $\nu(\XX\setminus\Lambda)\leq Cr^s$,
and for all $\tilde{x}\in\Lambda:$
\begin{equation}
\nu\left(B(\tilde{x},r)\cap f^{-k}B(\tilde{x},r)\right)\leq
O(1)r^{\alpha}\nu\left(B(\tilde{x},r)\right),
\end{equation}
for all $k=1,\ldots, |\log r|^{\gamma}$.
\end{defn}
For hyperbolic systems such as the Lozi map and billiards,
the SRT condition is shown to hold, see \cite{HNPV}. In the other direction
it is shown in \cite{GHN} that for $\tilde{g}(n)=(\log n)^{\gamma}$ we
have $\nu(E_n)\leq O(n^{-\alpha})$. For systems admitting rank one Young
towers with exponential decay of correlations, 
it is shown via \cite[Proposition 4.1]{CC} that the above SRT condition holds.
In particular their hypotheses capture the H\'enon map application. To
relate the SRT condition to the recurrence set $E_n$, we have the following result:
\begin{prop}\label{prop.srt}
Suppose that the SRT condition holds with constants specified
in Definition \ref{def.srt}. Then there exist $\gamma'>1$ and $\alpha'>0$ such
that $\tilde{g}(n)=(\log n)^{\gamma'}$ implies  $\nu(E_n)\leq O(n^{-\alpha'})$
\end{prop}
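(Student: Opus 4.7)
The plan is to convert the pointwise SRT bound into a global measure estimate on $E_n$ via a bounded overlap (Besicovitch-type) covering of the relevant ``good'' set, together with a direct estimate of the exceptional set. Fix $n$ and set $r=n^{-1/d}$, so that, writing $A_j=\{x\in\XX:\dist(x,f^jx)\le r\}$, we have $E_n=\bigcup_{j=1}^{\tilde g(n)}A_j$. Apply the SRT condition at radius $2r$ to obtain a set $\Lambda=\Lambda(2r)$ with $\nu(\XX\setminus\Lambda)\le C(2r)^s$ on which
$$
\nu\bigl(B(\tilde x,2r)\cap f^{-k}B(\tilde x,2r)\bigr)\le O(1)(2r)^{\alpha}\,\nu(B(\tilde x,2r))
$$
for every $k\le|\log(2r)|^{\gamma}$.

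The first step is a covering argument. Using Besicovitch's theorem on the $d$-dimensional manifold $\XX$, I would choose a family $\{B(\tilde x_i,r)\}_i$ of balls centred at points $\tilde x_i\in\Lambda$ whose doubles $B(\tilde x_i,2r)$ cover $\Lambda$ with multiplicity bounded by some constant $K=K(d)$ independent of $r$. If $x\in A_j\cap B(\tilde x_i,r)$, then $f^jx\in B(x,r)\subset B(\tilde x_i,2r)$, so $x\in B(\tilde x_i,2r)\cap f^{-j}B(\tilde x_i,2r)$. Applying SRT term by term and using bounded overlap gives
$$
\nu(A_j\cap\Lambda)\le\sum_i\nu\bigl(B(\tilde x_i,2r)\cap f^{-j}B(\tilde x_i,2r)\bigr)\le O(r^{\alpha})\sum_i\nu(B(\tilde x_i,2r))\le O(r^{\alpha}),
$$
provided that $j\le|\log(2r)|^{\gamma}$. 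Combining with the trivial estimate $\nu(A_j\setminus\Lambda)\le Cr^{s}$ yields
$$
\nu(A_j)\le O(r^{\alpha}+r^{s})=O(r^{\min(\alpha,s)}).
$$

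The second step is to pick $\gamma'$ and $\alpha'$ and sum. Since $|\log(2r)|^{\gamma}\sim d^{-\gamma}(\log n)^{\gamma}$, any choice of $\gamma'\in(1,\gamma)$ makes $\tilde g(n)=(\log n)^{\gamma'}\le|\log(2r)|^{\gamma}$ for all large $n$, so the SRT hypothesis is valid for every $j\in[1,\tilde g(n)]$. A union bound then gives
$$
\nu(E_n)\le\sum_{j=1}^{\tilde g(n)}\nu(A_j)\le(\log n)^{\gamma'}\,O\!\left(n^{-\min(\alpha,s)/d}\right),
$$
and absorbing the logarithmic factor into the power yields $\nu(E_n)\le O(n^{-\alpha'})$ for any $\alpha'<\min(\alpha,s)/d$, as required.

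The main technical obstacle is producing the Besicovitch-type covering of $\Lambda$ with an overlap constant independent of $r$; this is standard on a compact Riemannian manifold but must be invoked carefully because the centres are constrained to lie in $\Lambda$. A secondary subtlety is bookkeeping of radii: SRT is stated at a single radius, so the covering needs to be set up with balls of radius $r$ and doubles of radius $2r$, and the exceptional set must correspondingly be chosen as $\Lambda(2r)$ (not $\Lambda(r)$) to guarantee that SRT applies to the centres $\tilde x_i$ used in the estimate.
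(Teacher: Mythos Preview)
Your approach is essentially the paper's: show that each $A_j=F_j(r)$ has measure $O(r^{\alpha_2})$ by combining the SRT inequality with a Besicovitch-type covering, then sum over $j\le(\log n)^{\gamma'}$. The one substantive difference is that the paper does not bound $\sum_i\nu(B(\tilde x_i,2r))$ directly via bounded overlap of the doubled balls; instead it invokes Lemma~\ref{lem.Bes} to pass from $\nu(B(x,2r))$ back to $\nu(B(x,r))$ at the cost of a factor $r^{-s_1}$ and an enlarged exceptional set, after which bounded overlap of the $r$-balls themselves suffices. Your route is a little more direct and sidesteps that lemma.

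There is, however, a slip in your covering. You arrange that the \emph{doubles} $B(\tilde x_i,2r)$ cover $\Lambda$, but the containment you actually use,
\[
x\in A_j\cap B(\tilde x_i,r)\ \Longrightarrow\ x\in B(\tilde x_i,2r)\cap f^{-j}B(\tilde x_i,2r),
\]
requires $x$ to lie in the $r$-ball, not merely the $2r$-ball; if only the doubles cover and $\dist(x,\tilde x_i)\in(r,2r]$, the triangle inequality gives only $f^jx\in B(\tilde x_i,3r)$, which does not match the SRT hypothesis at radius $2r$. The fix is to choose centres so that the $r$-balls already cover $\Lambda$ while the $2r$-balls still have bounded overlap: e.g.\ take a maximal $r$-separated subset of $\Lambda$, so the $r$-balls cover by maximality, the $r/2$-balls are disjoint, and a volume comparison on the compact manifold bounds the multiplicity of the $2r$-balls by a constant depending only on $d$. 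With that adjustment the estimate $\nu(A_j\cap\Lambda)\le O(r^\alpha)\sum_i\nu(B(\tilde x_i,2r))\le O(r^\alpha)$ is valid and the remainder of your argument goes through.
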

For systems with polynomial decay of correlations similar SRT conditions are formulated
in \cite{Haydn-Wasilewska}, and in their case we expect $\nu(E_n)$ to have
a logarithmic asymptotics.
Before proving this proposition we state the following estimate
that quantifies how measures scale on small balls. 
\begin{lemma}\label{lem.Bes}
Let $\mathcal{F}(\lambda,r)=\{x\in\XX:\nu(B(x,2r))>\lambda\nu(B(x,r))\}.$
Then there exists $C>0$ independent of $r$ such that 
$$\nu(\mathcal{F}(\lambda,r))\leq C\lambda^{-1}.$$
\end{lemma}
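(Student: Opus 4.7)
The plan is to prove the bound via a Besicovitch-type covering argument. For each $x\in\mathcal{F}(\lambda,r)$, consider the ball $B(x,r)$; since $\XX$ is a compact $d$-dimensional Riemannian manifold, the Besicovitch covering theorem (applied in local charts) supplies a dimensional constant $N_1=N_1(d)$ together with countable subcollections $\mathcal{B}_1,\ldots,\mathcal{B}_{N_1}$ of the cover $\{B(x,r):x\in\mathcal{F}(\lambda,r)\}$, each consisting of pairwise disjoint balls, whose union still covers $\mathcal{F}(\lambda,r)$.

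For each disjoint subcollection $\mathcal{B}_k=\{B(x_i^k,r)\}_i$, disjointness together with the defining property of $\mathcal{F}(\lambda,r)$ gives
\[
\nu\Bigl(\bigcup_i B(x_i^k,r)\Bigr) = \sum_i \nu(B(x_i^k,r)) \leq \lambda^{-1}\sum_i \nu(B(x_i^k,2r)).
\]
To control the right-hand side I would invoke the bounded multiplicity of the doubled family: if a point $y$ lies in $B(x_{i_j}^k,2r)$ for $j=1,\ldots,m$, then each disjoint ball $B(x_{i_j}^k,r)$ is contained in $B(y,3r)$, so a volume packing argument on $\XX$ forces $m\leq N_2=N_2(d)$. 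Since $\nu$ is a probability measure, this yields $\sum_i \nu(B(x_i^k,2r))\leq N_2\,\nu(\XX)=N_2$.

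Summing the resulting estimate over $k=1,\ldots,N_1$ then gives $\nu(\mathcal{F}(\lambda,r))\leq N_1 N_2\lambda^{-1}$, establishing the lemma with $C=N_1(d)N_2(d)$ independent of $r$. The main subtlety is making the packing constant $N_2$ uniform in $r$: one runs the argument for $r$ below a fixed geometric threshold determined by a finite bi-Lipschitz atlas of $\XX$ (where the packing constant reduces to its Euclidean value), while for $r$ above that threshold the set $\mathcal{F}(\lambda,r)$ is either empty (if $\lambda\geq 1$, since then $B(x,r)=B(x,2r)=\XX$ forces the strict inequality to fail) or has measure bounded by $1\leq\lambda^{-1}$, and the two regimes are reconciled by a harmless enlargement of $C$.
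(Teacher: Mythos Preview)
Your argument is correct and is essentially the one the paper invokes: the paper does not spell out a proof but cites \cite[Lemma~A2]{CC}, noting that the Besicovitch covering theorem gives the result and that the argument is insensitive to the particular form of $\lambda$. Your packing bound for the multiplicity of the doubled balls is exactly the mechanism behind that cited proof; the only cosmetic point is that your large-$r$ dichotomy is slightly overstated (for intermediate $r$ one need not have $B(x,r)=\XX$), but the packing constant is still uniform there since $\XX$ is compact, so no genuine gap arises.
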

In the special case where $\lambda=r^{-s}$ this lemma is proved in 
\cite[Lemma A2]{CC} using the Besicovitch covering theorem \cite{mattila}. 
As can be inspected from their proof, the argument does not depend on the explicit
form of $\lambda$. See also \cite{Haydn-Wasilewska} in the case where 
$\lambda=|\log r|^{-s}$. However the scaling factor in the ball radii 
is important in the proof. To find the measure of the set
$$\widetilde{\mathcal{F}}(\lambda,r,c)
=\{x\in\XX:\nu(B(x,cr)>\lambda\nu(B(x,r))\},$$
(for $c>2$), then we would need to iteratively apply Lemma \ref{lem.Bes}. 
We will do this in the proof of Proposition \ref{prop.extreme2}.

\noindent {\it Proof of Proposition \ref{prop.srt}:}
Consider the set $F_j(r)=\{x:\dist(x,f^j(x))\leq r\}$. Then we have:
\begin{eqnarray*}
F_j(r)\cap B_{r}(x) & \subset & \{y\in B_{r}(x):\dist(y,f^{j}(y))\leq r\} \\
 & \subset & \{y\in B_{r}(x): f^{j}(y)\in  B_{2r}(x)\} \\
 & \subset & B_{2r}(x)\cap f^{-j}(B_{2r}(x)).
\end{eqnarray*}
Hence, by Lemma \ref{lem.Bes} and the SRT condition, there exist $\alpha_1>0$
and $s_1>0$ such that for all $j\leq |\log r|^{\gamma}$ we have
\begin{equation}\label{eq.bes1}
\nu(F_j(r)\cap B_{r}(x))\leq r^{\alpha}\nu(B(x,2r))\leq r^{\alpha-s_1}\nu(B(x,r)),
\end{equation}
provided $x\not\in\Lambda'$, where $\Lambda'=\XX\setminus\Lambda$ 
is such that $\nu(\Lambda')\leq C\max\{r^{s_1},r^s\}$. Using equation
\eqref{eq.bes1} we can now estimate 
$\nu(F_j(r))$ by taking a a cover of $F_{j}(r)$ using disjoint balls and 
applying the Besicovitch Covering Lemma. For some
$\alpha_2>0$ we obtain $\nu(F_j(r))\leq C(n^{-\alpha_2})$. The corresponding
estimate for $\nu(E_n)$ follows by setting $r=1/n$ and summing over $j\in[1,(\log n)^{\gamma}]$.
\hfill $\square$

\subsection{Convergence to EVD for suspension flows}\label{sec.suspension}

Assume that $(f,\XX,\nu)$ is a measure preserving system and
that $h\in L^{1}(\nu)$ is a positive roof function.
Consider the suspension space
\[
\XX^{h}=\{(x,u)\in \XX\times\mR \mid 0\leq u\le h(x)\}\,/\sim,
\qquad (x,h(x))\sim(f(x),0).
\]
We denote the suspension (semi) flow by
\[
g_t : \XX^h \to \XX^h, \quad
g_t(x,u)=(x,u+t)/\sim.
\]
On $\XX^h$ introduce the flow-invariant probability measure $\nu^h$ given by
$\nu \times m/ \bar h$ and $\bar h=\int_\XX h d \nu$. For flows, the natural
definition of the recurrence set is:
\begin{equation}\label{eq:En-flow}
    E_T(\gamma):=\left\{ x\in \XX^h: \dist(x,g_t x)\le\frac{1}{T^{1/d}}
    \,\textrm{for some}\;
t\in[\delta_0,T^{\gamma}]\right\}.
    \end{equation}
The choice of $\delta_0>0$ is arbitrary but it will be convenient to take 
$\delta_0\leq\inf h$, where we assume $\inf h>0$.  Consider a (measurable) observation $\phi:\XX^h\to\mR$ and define $M_T:\XX^h\to
\mR$ by
\begin{equation}\label{eq.Phi}
M_{T}(x):=\max \{ \phi(g_t(x)) \mid 0\le t < T \}.
\end{equation}
In \cite{HNT}, it is shown that if the base transformation $f$ satisfies 
convergence to an EVD, then for suitable scaling constants $a_{T},b_{T}$, 
the process $a_T(M_T-b_T)$ also converges in law to one of standard EVD types. 
Suppose $u_T=u/a_{T}+b_T$ is a sequence such that
$$\lim_{T\to\infty}T\nu^{h}\{\phi(x)\geq u_T\}=\tau(u),$$
and suppose for $\gamma>0$ and $\alpha>0$ 
we have $\nu^h(E_{T}(\gamma))\leq CT^{-\alpha}$. Then under suitable
hypothesis on the rate of mixing, and on the regularity of $\nu$ and $\phi$,   
we can conjecture that there exists $\alpha'>0$ such that
\begin{equation}
\left|\nu\{M_t\leq u_T\}-G_{\sqrt{T}}(u)\right|\leq \frac{C}{T^{\alpha'}}.
\end{equation}
In Section \ref{sec.lorenz} we will study the recurrence set $E_T$ 
for the Lorenz flow.
To relate $E_{T}(\gamma)$ to that of $E_n(\gamma)$ we observe the following:
\begin{lemma}\label{lem.lift}
Suppose that $(g_t,\XX^h,\nu^h)$ is a suspension flow over
$(f,\XX,\nu)$ with roof function $h\in[h_{m},h_M]\subset(0,\infty)$.
Then there exist $\gamma,\gamma'>0$ and $\alpha,\alpha'>0$ such that
$$\nu^h(E_{T}(\gamma))\leq O(T^{-\alpha})\Leftrightarrow \nu(E_{n}(\gamma'))\leq O(n^{-\alpha'}).$$
\end{lemma}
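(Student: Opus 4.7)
The plan is to transfer recurrence bounds back and forth via the natural projection $\pi:\XX^h\to\XX$, $(x,u)\mapsto x$, and a corresponding lift $x\mapsto (x,u)$ with $u$ confined to the small interval $[0,h_m]$. The key observation is that since $h\in[h_m,h_M]$ with $h_m>0$, the flow time required to travel from $(x,0)$ to $(f^N x,0)$ equals $t_N:=\sum_{i=0}^{N-1}h(f^i x)\in[h_m N,h_M N]$, so flow times and numbers of base iterations are comparable up to multiplicative constants. This comparability is what lets a power-law asymptotic pass from one side to the other with at most an inflation of the exponent $\gamma$ and preservation of the decay rate $\alpha$.

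For the direction $(\Rightarrow)$, assume $\nu^h(E_T(\gamma))\leq O(T^{-\alpha})$ and pick $x\in E_n(\gamma')$, so $\dist(x,f^j x)\leq n^{-1/d}$ for some $j\in[1,n^{\gamma'}]$. For any $u\in[0,h_m]$, the condition $u\leq h_m\leq h(f^j x)$ ensures $g_{t_j}(x,u)=(f^j x,u)$, hence $\dist(g_{t_j}(x,u),(x,u))=\dist(x,f^j x)\leq n^{-1/d}$, and $t_j\in[h_m,h_M n^{\gamma'}]\subseteq[\delta_0,T^\gamma]$ provided $T=n$ and $\gamma>\gamma'$ for $n$ sufficiently large. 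By Fubini
\begin{equation*}
\nu^h(E_T(\gamma))\geq \frac{h_m}{\bar h}\,\nu(E_n(\gamma')),
\end{equation*}
so that $\nu(E_n(\gamma'))\leq O(n^{-\alpha})$, yielding $\alpha'=\alpha$.

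For the direction $(\Leftarrow)$, assume $\nu(E_n(\gamma'))\leq O(n^{-\alpha'})$. Given $(x,u)\in E_T(\gamma)$ with $\dist((x,u),g_t(x,u))\leq T^{-1/d}$ for some $t\in[\delta_0,T^\gamma]$, write $g_t(x,u)=(f^N x,u')$. For $T$ sufficiently large the resolution $T^{-1/d}$ is much smaller than $\delta_0$, so the flow direction component forces $N\geq 1$. The product metric on the fundamental domain $\XX\times[0,h_M]$ dominates the quotient metric on $\XX^h$, hence $\dist(x,f^N x)\leq T^{-1/d}$; and from $t=(h(x)-u)+\sum_{i=1}^{N-1}h(f^i x)+u'$ one reads off $N\leq t/h_m\leq T^\gamma/h_m$. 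Choosing $n=T$ and $\gamma'>\gamma$ then gives $x\in E_n(\gamma')$, so $E_T(\gamma)\subseteq \pi^{-1}(E_n(\gamma'))$ and
\begin{equation*}
\nu^h(E_T(\gamma))\leq \nu^h(\pi^{-1}(E_n(\gamma')))=\frac{1}{\bar h}\int_{E_n(\gamma')}h\,d\nu\leq \frac{h_M}{\bar h}\,\nu(E_n(\gamma'))=O(T^{-\alpha'}).
\end{equation*}

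The main obstacle is keeping the geometry of the quotient $\XX^h$ under control: two points $(x,u)$ and $(y,v)$ in $\XX^h$ may be close either through a small separation within the fundamental domain or by being near opposite ends of a flow loop through the identification $(x,h(x))\sim(f(x),0)$. The lower bound $t\geq \delta_0$ in the definition of $E_T(\gamma)$ rules out the degenerate case $N=0$, while the bounded roof $h\leq h_M$ guarantees that the loop length is uniformly bounded, so the identification does not manufacture spurious short returns at scale $T^{-1/d}\ll h_m$. With these caveats the argument reduces to the comparability $t\approx h_m N\approx h_M N$, and the change in time scale from $T^\gamma$ to $n^{\gamma'}$ is absorbed into an arbitrarily small inflation of the exponent.
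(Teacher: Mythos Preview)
Your approach is the same as the paper's: both rest on the observation that a bounded roof $h\in[h_m,h_M]$ makes flow time and number of base returns comparable, so that $T\in[n h_m,(n+1)h_M]$. The paper's sketch is in fact only one sentence and asserts the sharper conclusion that one may take $\gamma'=\gamma$ and $\alpha'=\alpha$, whereas you inflate $\gamma$ slightly to swallow the constants $h_m,h_M$; your caution here is appropriate, since the paper's $\tilde g(n)\sim n^\gamma$ convention does not obviously absorb a fixed multiplicative constant.

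One phrasing issue in your $(\Leftarrow)$ direction: the inference ``the product metric dominates the quotient metric, hence $\dist(x,f^Nx)\le T^{-1/d}$'' runs the wrong way. Domination of the quotient metric by the product metric gives $d_{\XX^h}\le d_{\mathrm{prod}}$, which from $d_{\XX^h}\le T^{-1/d}$ yields nothing about the base distance. What you actually need (and what your final paragraph correctly gestures at) is that for separations small compared to $h_m$, the shortest path in $\XX^h$ cannot use the identification $(x,h(x))\sim(f(x),0)$, so locally $d_{\XX^h}=d_{\mathrm{prod}}$ and the base component is controlled. The paper does not address the metric on $\XX^h$ at all, so your level of care already exceeds the original; just replace the domination sentence with the local-equality argument you sketch at the end.
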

The proof of this lemma is straightforward if we observe that for all
$n\geq 0$ we have $T\in[nh_m,(n+1)h_M]$. In this case we can in fact take
$\gamma'=\gamma$ and $\alpha'=\alpha$. In general
the constants $\gamma',\alpha'$  will depend on the regularity of $h$
if (for example) we allow $\sup h(x)=\infty$.


\subsection{Quasi-periodic systems.}\label{sec.quasiperiodic}
For quasi-periodic systems it is known that non-standard limits exist
for the distribution of the return times, see \cite{Coelho, Coelho-F}. In particular 
quasi-periodic systems are not mixing, and hence condition (H1) is not valid. Moreover, 
we observe that the measure of $E_n(\gamma)$ 
abruptly changes from positive to zero as $n$ is increased
(for fixed $\gamma<1$). This is unlike what is observed for hyperbolic systems. 
To see this intuitively, recall that the dynamical properties of quasi-periodic systems can be described in terms of their rotation number. 
If the rotation number is irrational then there are no periodic orbits, and therefore if $E_n(\gamma)\neq\emptyset$ 
it will not contain periodic points. For $\gamma$ sufficiently small, we find that the set 
$E_n(\gamma)$ (for all sufficiently large $n$) is empty or quite meagre with zero measure. 
If $\gamma$ is chosen sufficiently large, or if the system has rational rotation number then 
we find that $\nu(E_n)$ is uniformly bounded away from zero (for all $n$). 
From a point of view of numerical diagnostic tests for convergence to an EVD, we might deduce 
that an abrupt change in $\nu(E_n(\gamma))$ indicates that the statistics of extremes are 
governed by a non-standard limit law.

\paragraph{Circle rotation maps.}
As a case study, consider the circle rotation map $f(x)=x+\theta$ on $S^1=[0,1]/(0\sim 1)$,
and $\theta\in[0,1]$. For fixed $\gamma<1$, we show that for typical $\theta\in[0,1]$ the
measure $\nu(E_n(\gamma))$ abruptly drops 
to zero as $n$ increases. The discussion below is also applicable to minimal circle homeomorphisms 
with rotation number $\theta$. Let  $\tau_r(x):=\inf\{k\geq 1:f^k(x)\in B(x,r)\}$. 
For circle rotations, unique ergodicity allows us to obtain bounds on 
$\nu(E_n(\gamma))$ via the statistics of  $\tau_r(x)$, at least for typical 
rotation numbers. In \cite{Kim} it is shown that for 
\emph{all} $x\in S^1$:
\begin{equation}
\liminf_{r\to 0}\frac{\log\tau_r(x) }{-\log r}=\frac{1}{\eta}
\quad\mbox{and}\quad
\limsup_{r\to 0}\frac{\log\tau_r(x) }{-\log r}=1,
\end{equation}
where $\eta=\sup\{\beta:\liminf_{k\to\infty}k^{\beta}\|k\theta\|=0\}.$
Here $\|k\theta\|$ denotes the nearest integer to $k\theta$. For
Lebesgue almost all $\theta\in[0,1]$, $\eta=1$. Liouville numbers (of
measure zero) correspond to $\eta=\infty$, an example being
$\theta=\sum_{k\ge 1}10^{-k!}.$ A consequence is the following result:
\begin{prop}\label{prop:quasi} Suppose $f:S^1\to S^1$ is a circle rotation map. For
Lebesgue almost all $\theta\in[0,1]$ and all $\gamma<1$, there exists $N>0$ such that
for any $n>N$, we have $\nu(E_n(\gamma))=0$.
\end{prop}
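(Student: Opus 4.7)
The plan is to exploit the very special feature of circle rotations that the displacement $\dist(x,f^j x)$ does not depend on the base point, and then invoke the Diophantine estimate of Kim cited just above.

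First I would observe that for $f(x)=x+\theta$ on $S^1$, we have $\dist(x,f^j x)=\|j\theta\|$, independently of $x$. Consequently $E_n(\gamma)$ is a zero/one set: either there exists some $j\in[1,n^{\gamma}]$ with $\|j\theta\|\le n^{-1}$, in which case $E_n(\gamma)=S^1$, or no such $j$ exists, in which case $E_n(\gamma)=\emptyset$. So the claim $\nu(E_n(\gamma))=0$ for all large $n$ reduces to showing that for large $n$ no $j\in[1,n^{\gamma}]$ satisfies $\|j\theta\|\le n^{-1}$.

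Next I would translate this into a statement about $\tau_r(x)$. With $d=1$ and $r=n^{-1}$, the condition ``there exists $j\in[1,n^{\gamma}]$ with $\|j\theta\|\le n^{-1}$'' is precisely $\tau_{n^{-1}}(x)\le n^{\gamma}$. By the Kim result quoted above, for Lebesgue a.e. $\theta\in[0,1]$ we have $\eta=1$, whence
\begin{equation*}
\liminf_{r\to 0}\frac{\log\tau_r(x)}{-\log r}=1.
\end{equation*}
Fix $\gamma<1$ and choose $\epsilon>0$ so small that $1-\epsilon>\gamma$. The liminf statement then gives $N=N(\theta,\gamma)$ such that $\tau_{n^{-1}}(x)>n^{1-\epsilon}>n^{\gamma}$ for all $n>N$. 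Hence no $j\in[1,n^{\gamma}]$ produces $\|j\theta\|\le n^{-1}$, so $E_n(\gamma)=\emptyset$ and in particular $\nu(E_n(\gamma))=0$ for $n>N$.

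There is essentially no obstacle here: the whole force of the proposition is carried by the invariance of the displacement under rotations and the Diophantine asymptotic of Kim. The only point that deserves a brief remark is that the bound $N$ depends on $\theta$ (through the rate at which the liminf is attained), which is why the conclusion is for Lebesgue a.e. $\theta$ rather than uniformly in $\theta$; the Liouville set, where $\eta=\infty$, is excluded because there the liminf is $0$ and arbitrarily short returns occur for infinitely many scales $r$.
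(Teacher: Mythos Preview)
Your proof is correct and follows exactly the route the paper intends: the paper states the proposition as a direct ``consequence'' of Kim's asymptotic $\liminf_{r\to 0}\frac{\log\tau_r(x)}{-\log r}=1/\eta$ together with $\eta=1$ for Lebesgue-a.e.\ $\theta$, and you have spelled out precisely how that deduction goes via the zero/one observation for $E_n(\gamma)$ and the inequality $\tau_{n^{-1}}(x)>n^{1-\epsilon}>n^{\gamma}$.
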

We remark that the conclusion of Proposition \ref{prop:quasi} also applies to more general 
circle homeomorphisms such as the Arnold Family. See Section \ref{sec.num.quasiperiodic}.


\section{A numerical procedure to estimate $\nu(E_n(\gamma))$}
\label{sec.numerics}
For selected dynamical systems we 
compare numerical estimates on the decay of $\nu(E_n(\gamma))$ to our analytic
results, and study systems for which there are conjectural power law bounds on 
$\nu(E_n(\gamma))$. Examples include the H\'enon Map (for the classical parameter values), 
Axiom-A diffeomorphisms and the Lorenz-63 flow. 

We now outline the numerical approach. Consider a dynamical system $(f,\XX, \nu)$ as in the previous sections. For a $\nu$-measurable 
set $A$ we define
\begin{equation}\label{eq.nu-est}
\nu_{\rm est}(A; N, x)
:=
\frac{1}{N} \sum_{j=0}^{N-1} 1_A(f^j(x)).
\end{equation}
Birkhoff's Ergodic Theorem implies that for $\nu$-a.e.\ $x\in\XX$
\[
\nu(A) = \lim_{N\to\infty} \nu_{\rm est}(A; N, x).
\]

Assume now that $(f,\XX, \nu)$ has decay of correlations for Lipschitz 
continuous functions with rate function $\Theta(n)\to 0$ as in (H1) or (H1s).  
If the correlation decay is fast enough
(e.g. $\Theta(n) = O(n^{-(2+\varepsilon)})$ for some $\varepsilon>0$), 
then we expect the Central Limit Theorem (CLT) to hold 
for the invariant measures, see \cite{Young1}. When the CLT applies, 
the sample estimates $\nu_{\rm est}(\cdot; N, x) $ are approximately normal 
with mean $\nu(\cdot)$ and standard deviation $\frac{\sigma^2}{\sqrt{N}}$, 
where $\sigma$ is a constant that depends on the decay of correlations. 

Given a set $A$ in $\XX$, we follow a double sampling procedure to 
estimate its measure. For a fixed $x_0$ in the support of $\nu$, we 
consider its following $N$ iterations and we estimate 
$\nu(A)$ by $\hat{\nu}_0 = \nu_{\rm est}(A,N,x_0)$. 
We repeat this for $M$ different starting points $x_0,\cdots,x_{M-1}$ 
obtaining a set $\hat{\nu}_0, \hat{\nu}_1, \dots, \hat{\nu}_{M-1}$
of $M$ estimations. Finally, we estimate $\nu(A)$ by the sample mean
\[
\hat{\nu} = \frac{1}{M}\sum_{m=0}^{M-1} \hat{\nu}_m
\]
and use the sample standard deviation
\[
s_{\nu}^2 = \frac{1}{M-1}\sum_{m=0}^{M-1} (\hat{\nu}_m - \hat{\nu})^2
\]
to estimate the uncertainty in the approximation of $\nu(A)$.

In the following subsections we apply this procedure to estimate 
$\nu(E_n)$ in order to check the quantitative recurrence conditions, in particular condition (H2a). 
In the cases with slow 
decay of correlations we expect the uncertainty of the estimation 
to behave badly. Hence, the procedure also provides an indirect 
check about condition (H1). 


When applying this procedure we need an initial set 
$x_0,\cdots,x_{M-1}$ of starting points lying in the 
support of $\nu$. Typically we generate $x_0$ by applying a 
transient of a few thousand iterations to a random point in $\XX$. 
For the rest of the points $x_1,\dots,x_{M-1}$, we take $x_i = f^{N+1}(x_{i-1})$ 
for the sake of efficiency. 
In the examples below we used $M=20$ samples and $N=10^4$ points unless 
specified otherwise. The figures display the confidence intervals 
$\hat{\nu}\pm 1.96\times s_\nu$ as a function of $n$. If the estimates 
of $\nu(E_n)$ fit well to a straight line on a log-log
plot this suggests that $\nu(E_n)$ decays as a power law in $n$.


\subsection{Quantitative recurrence rates for non-uniformly expanding systems}
\label{sec.numeric.nue}
\DeclareGraphicsExtensions{.eps}
\begin{figure}[t]
\centering
\psfrag{n}{$n$}
\psfrag{meas}{$\nu(E_n(\gamma))$}
\includegraphics[width=0.49\textwidth]{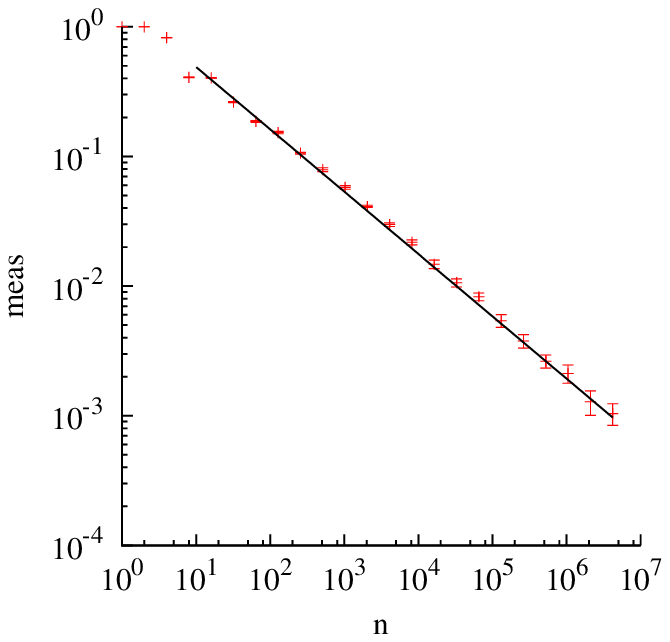}
\includegraphics[width=0.49\textwidth]{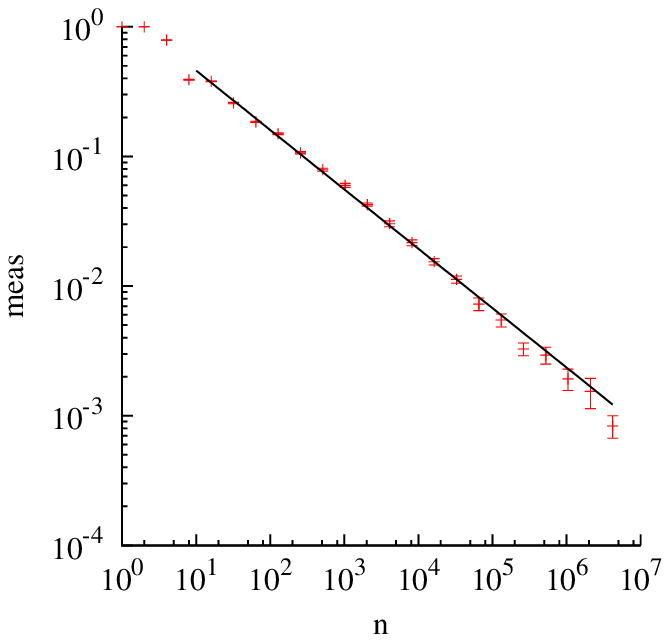}
\caption{
Numerical estimate of $\nu(E_n(\gamma))$ for $\gamma=0.5$ versus $n$ for the
perturbed doubling map \eqref{eq.example-doubling} for $\epsilon=0.01$ (left)
and $\epsilon=0.1$ (right). 
Note that the scale on both axes is logarithmic. The estimates of $\nu(E_n(\gamma))$ 
fit well to a straight line, which suggests that $\nu(E_n(\gamma))$ decays as a 
power law in $n$.}
\label{fig.doubling}
\end{figure}

\begin{figure}[t]
\centering
\psfrag{n}{$n$}
\psfrag{meas}{$\nu(E_n(\gamma))$}
\includegraphics[width=0.49\textwidth]{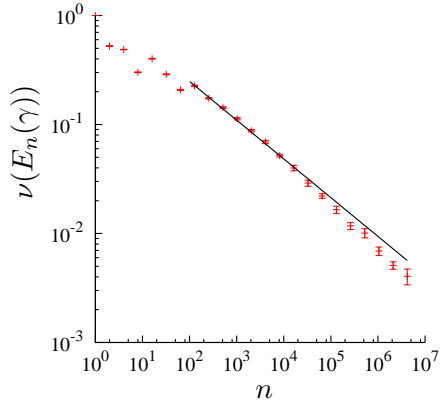}
\caption{
As Figure \ref{fig.doubling}, but for the quadratic family \eqref{eq.logistic}
with $a=3.9$ and $\gamma=0.5$. 
} 
\label{fig.logistic}
\end{figure}

\begin{figure}[t]
\centering
\psfrag{n}{$n$}
\psfrag{meas}{$\nu(E_n(\gamma))$}
\includegraphics[width=0.49\textwidth]{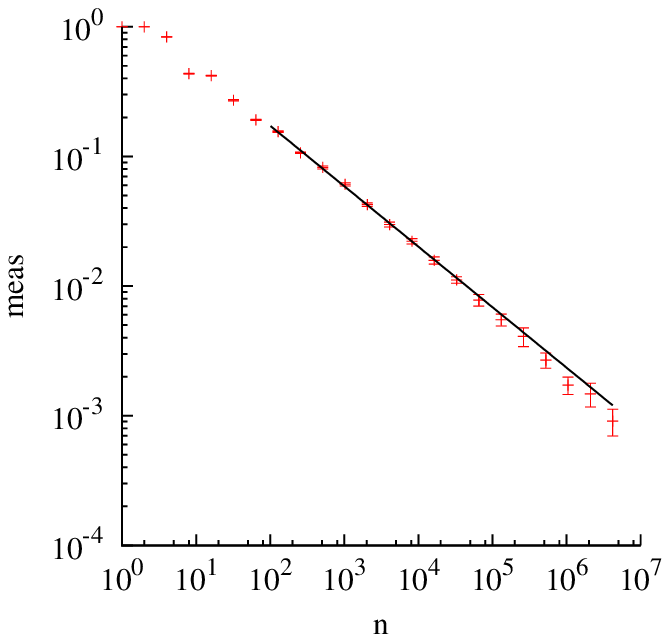}
\includegraphics[width=0.49\textwidth]{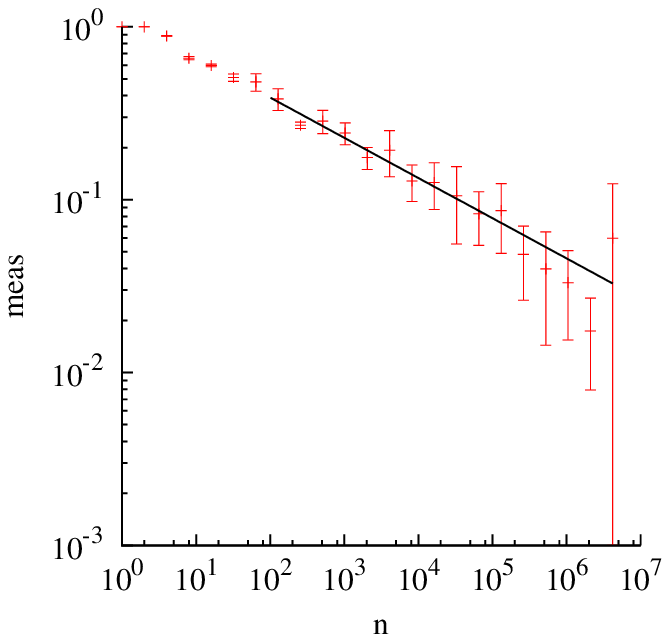}
\caption{
As Figure \ref{fig.doubling}, but for the intermittency map \eqref{eq.intermittency}
with $b=0.1$ (left) and $b=0.7$ (right). In both cases $\gamma=0.5$. 
} 
\label{fig.intermittency}
\end{figure}

\begin{figure}[t]
\centering
\psfrag{n}{$n$}
\psfrag{meas}{$\nu(E_n(\gamma))$}
\includegraphics[width=0.49\textwidth]{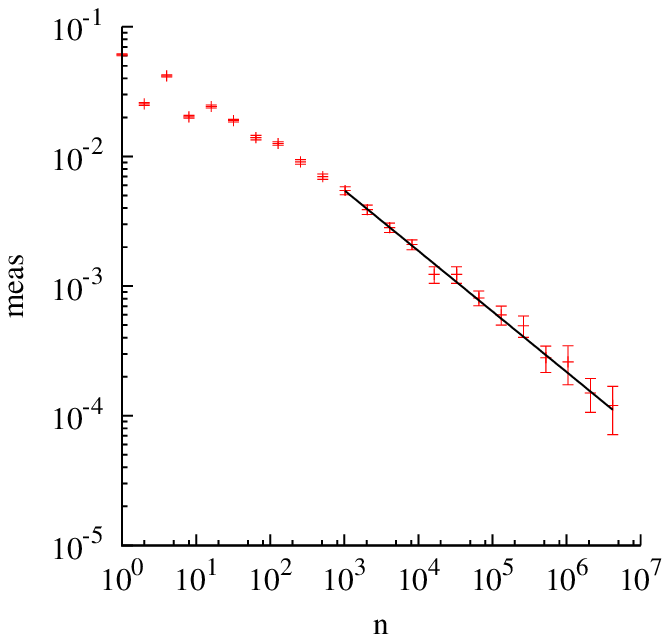}
\caption{
As Figure \ref{fig.doubling}, but for the Alves-Viana map \eqref{eq.av} with
with $a=1.9$ and $\epsilon=0.01$. We take $\gamma=0.5$ 
} 
\label{fig.av}
\end{figure}

We apply the procedure described above to certain (non-uniformly) expanding dynamical systems 
which include the quadratic family of maps, intermittency maps and the Alves-Viana map.
 We estimate $\nu(E_n(\gamma))$ and contrast this estimate to the theoretical results.

\paragraph{Uniformly expanding maps.}
Consider the perturbed doubling map on the circle $S^1=[0,1]/(0\sim 1)$ given by
\begin{equation}\label{eq.example-doubling}
f(x)=2x+\epsilon\sin(2\pi x) \, \mod 1,
\end{equation}
where $\epsilon>0$ is sufficiently small. This is a uniformly expanding system
and it is known that (H1) and (H2a) hold. For $\epsilon=0$ it is known that
$\nu(E_n(\gamma))\leq n^{-1+\gamma}$, see \cite[Section 3.1]{holland-nicol}.    
For $\epsilon\neq 0$, and $\gamma=1/2$, we
observe numerically that $\nu(E_n(\gamma))$ decays as a power law in $n$:
for $\epsilon=0.01$ we have $\nu(E_n(\gamma))=O(n^{-0.48})$ and for
$\epsilon=0.1$ we have $\nu(E_n(\gamma))=O(n^{-0.46})$, see Figure
\ref{fig.doubling}. Using the blocking argument of Section \ref{sec.general}, we can balance
this estimate against that given by exponential decay of correlations over the time
scale $\tilde{g}(n)$. Thus we might choose instead $\tilde{g}(n)=(\log n)^{\gamma}$
to improve the overall estimate. Convergence to EVD would be of the order 
$O(n^{-1/2+\epsilon_1})$, (for any $\epsilon_1>0$). 


\paragraph{The quadratic family.}
This quadratic family of maps $(f,\XX,\nu)$, with $\XX=[0,1]$, is given by
\begin{equation}\label{eq.logistic}
f(x)=ax(1-x),\quad a\in[0,4]
\end{equation}
There is a positive measure subset $\Omega$ of parameter values close to $a=4$, where
$(f,I,\nu)$ admits a Young tower with exponential return time asymptotics. Numerically we  
explore $\nu(E_n(\gamma))$ for the parameter value $a=3.9$ (where the map appears to exhibit 
chaos), and choose $\gamma=0.5$. We see in Figure 
\ref{fig.logistic} that $\nu(E_n(\gamma))$ decays as a power law with 
$\nu(E_n(\gamma))=O(n^{-0.36})$. This suggests that we get a fast convergence rate
to the EVD relative to the theoretical results. As discussed \cite[Section 3.3]{holland-nicol}
the best that can achieved is the existence of an $\alpha>0$ such that $\tilde{g}(n)=(\log n)^5$
implies $\nu(E_n)\leq n^{-\alpha}$. The above estimate suggests we can take $\alpha$ much 
greater than 0.36, since by use of exponential decay of correlations we can use the time 
scale $\tilde{g}(n)=(\log n)^5$.  


\paragraph{Non-uniformly expanding intermittency maps.}

Consider the interval map $f:I\to I$, with $I=[0,1]$ and $b>0$, given by
\begin{equation}\label{eq.intermittency}
f(x)
=
\begin{cases}
x(1+(2x)^b) & \mbox{for } 0 \leq x < \tfrac{1}{2}, \\
2x-1 & \mbox{for } \tfrac{1}{2} \leq x \leq 1.
\end{cases}
\end{equation}
For $b\in(0,1)$ this system $(f,I,\nu)$ admits a Young tower with polynomial return time 
asymptotics. For $b\geq 1$,  the invariant (physical) measure of $f$ is no longer 
absolutely continuous with respect to Lebesgue: it is the Dirac measure at $\{0\}$.
For $b<1/20$, analytic estimates on the convergence rate to an EVD where obtained in 
\cite[Section 3.2]{holland-nicol}. We indicate here that those bounds might extend
to a wider parameter range. For $b=0.1$ we have $\nu(E_n(\gamma))=O(n^{-0.46})$,
and for $b=0.7$ we have $\nu(E_n(\gamma))=O(n^{-0.23})$. The power law indeed weakens
as $b\to 1$ (for fixed $\gamma=1/2$), see Figure~\ref{fig.intermittency}. By the blocking 
argument of Section \ref{sec.general}, we require $\Theta(\tilde{g}(n))$ to converge to zero at a sufficiently fast polynomial rate, and hence we do need to take a representation of
the form $\tilde{g}(n)=n^{\gamma}$. The numerical methods above may be adapted further to study
the largest such $\gamma$ we can take. Moreover
we observe that there is a blow up in the confidence intervals 
of the estimations due to slow polynomial decay of correlations (and hence no CLT convergence). 
This demonstrates that, despite the numerical procedure focuses on checking assumption (H2a), 
it is also sensitive to a deterioration on the decay of correlations. 
Hence, the procedure also provides an indirect check of assumption (H1). 

\paragraph{The Alves-Viana map}
As introduced in Section \ref{sec.partial}, consider the Alves-Viana map defined by 
equation \eqref{eq.av}. Numerically, we observe that $\nu(E_n(\gamma))=O(n^{-0.47})$ 
for $\gamma=0.5$, see Figure \ref{fig.av}. The bound obtained in 
Theorem \ref{thm.alvesviana} is taken for functions of the form $\tilde{g}(n)=(\log n)^{\gamma}$, and
hence we might expect a faster asymptotic estimate for the rate of convergence to an EVD.

\subsection{Decay of $\nu(E_n(\gamma))$ for non-uniformly hyperbolic systems}\label{sec.numerical.hyp} 

\begin{figure}[t]
\centering
\psfrag{n}{$n$}
\psfrag{meas}{$\nu(E_n(\gamma))$}
\includegraphics[width=0.49\textwidth]{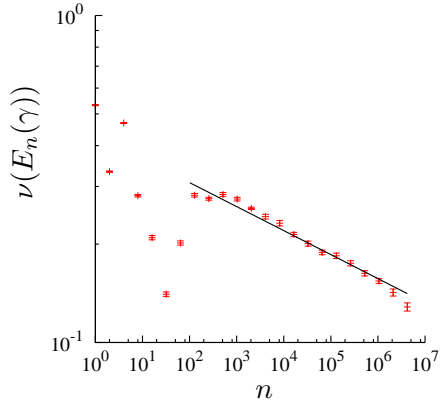}
\caption{
As Figure \ref{fig.doubling}, but for the H\'enon map \eqref{eq.henon}
with $(a,b)=(1.4,0.3)$ and $\gamma=0.5$. 
} 
\label{fig.henon}
\end{figure}

\begin{figure}[t]
\centering
\psfrag{n}{$n$}
\psfrag{meas}{$\nu(E_n(\gamma))$}
\includegraphics[width=0.49\textwidth]{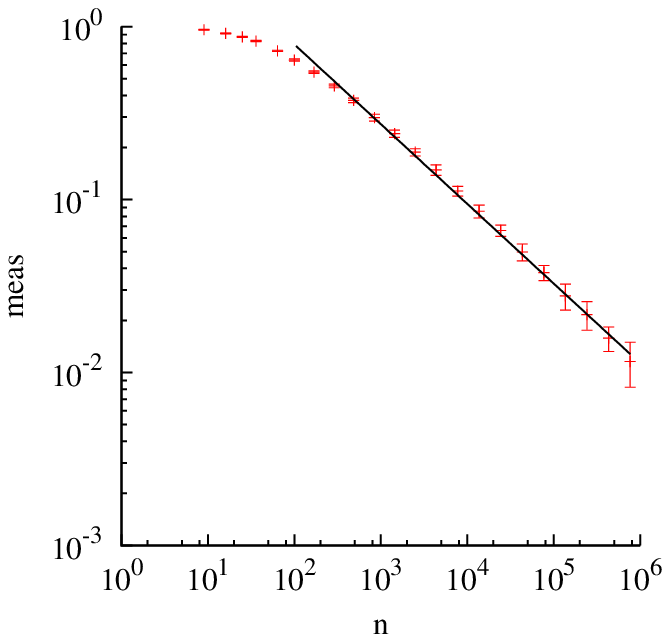}
\includegraphics[width=0.49\textwidth]{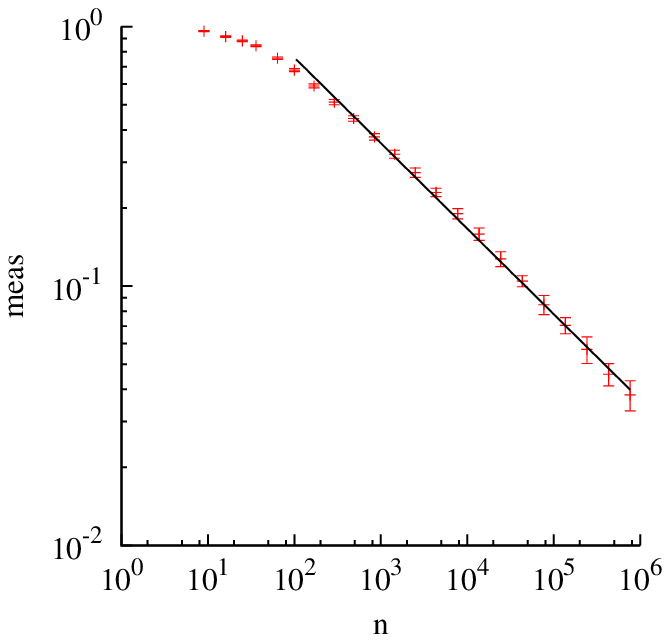}
\caption{
As Figure \ref{fig.doubling}, but for the Axiom-A diffeomorphism given by 
\eqref{eq.anos} for $\varepsilon =0$ (left) and $\varepsilon =0.15$ (right). 
In both cases $\gamma=0.5$. 
}
\label{fig.anosov}
\end{figure}

The numerical procedure of Section~\ref{sec.numerics} can
be applied also to systems that fit the hypothesis of Theorem~\ref{thm.extreme2}. 
As in the previous section, the numerical analysis of these two maps 
also shows a power decay law as expected, In this section, let us analyse maps 
which are not known to satisfy (H2a)-(H2c), such as the H\'enon map
(for the classic parameter values) and Axiom-A diffeomorphisms 
(with rank of the unstable dimension greater than one). 

\paragraph{The H\'enon map.}

The H\'enon family is given by:
\begin{equation}\label{eq.henon}
f(x,y)=(1-ax^2+y,\, bx),\quad (a,b)\in\mathbb{R}^2.
\end{equation}
By the theory of \cite{BY1} it is shown that the system admits a Young
tower with exponential decay of correlations. Following \cite{CC} and the discussion
of Section \ref{sec.SRT} the conditions of Theorem
\ref{thm.extreme} hold for a positive measure subset the parameter space. However this parameter set is not 
readily computable, and it is an open problem to determine whether 
there is a strange attractor for the parameters $(a,b)=(1.4,0.3)$. 
Numerical investigations at these parameters suggest that 
$\nu(E_n(\gamma))=O(n^{-0.073})$ for $\gamma=0.5$, and
thus power law behaviour is observed (albeit at a weak rate), see Figure \ref{fig.henon}. Hence we
expect convergence to an extreme value law to hold (in the sense described in Theorem
\ref{thm.extreme2}). As remarked for uniformly expanding maps, 
the convergence rate estimate can be improved further by taking the function 
$\tilde{g}(n)$ of logarithmic type.

\paragraph{Axiom-A diffeomorphisms}

For non-uniformly hyperbolic systems, analytic proofs on convergence to 
an EVD are generally achieved for rank 1 attractors, i.e., where 
the dimension of the unstable conditional measures is equal to $1$. 
However, for Axiom A systems little is known on return
time statistics and convergence to EVD when the unstable dimension 
is greater than $1$. 

Consider the map in the $3$-dimensional torus $f:\mathbb{T}^3 
\rightarrow \mathbb{T}^3$, with $\mathbb{T} = \mathbb{R}/ \mathbb{Z}$,  
defined as
\begin{equation} \label{eq.anos}
f(x_1, x_2, x_3) = (x_1 + x_2 + x_3 + \varepsilon \cos(2\pi x_2), \, 
x_1 + 2 x_2, \, x_1 + x_3 ). 
\end{equation}
For $\varepsilon$ small this map is an Anosov Map with a two dimensional unstable 
manifold. Our numerical computations indicate that $\nu(E_n(\gamma))=
O(n^{-0.46})$ for $\varepsilon=0$  and $\nu(E_n(\gamma))=O(n^{-0.33})$ 
for $\varepsilon=0.15$. For both cases we have taken $\gamma=0.5$. 

\subsection{Decay of $\nu(E_n(\gamma))$ for quasi-periodic systems} \label{sec.num.quasiperiodic}

\begin{figure}[t]
\centering
\psfrag{n}{$n$}
\psfrag{meas}{$\nu(E_n(\gamma))$}
\includegraphics[width=0.49\textwidth]{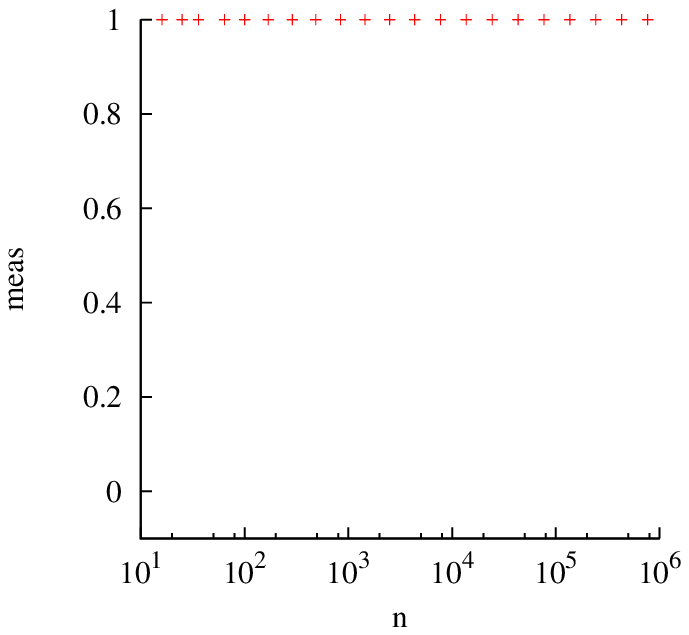}
\includegraphics[width=0.49\textwidth]{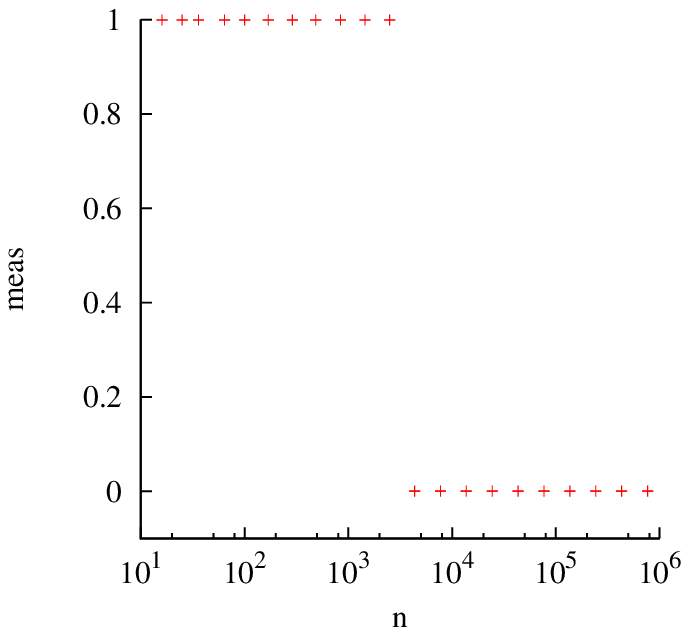}
\caption{
As Figure \ref{fig.doubling}, but for the Arnold family \eqref{eq.arnold} with
$\theta=1/2$ (left) and $\theta=1/3$ (right). In both cases
we have used $k=0.1$ and $\gamma=0.5$. 
Note that only the horizontal axis shows a logarithmic scale.}
\label{fig.arnold}
\end{figure}

For quasi-periodic systems, we observe that the recurrence statistics do not conform to 
conditions (H2a)-(H2c), at least 
for typical irrational rotation numbers. We verify the behaviour of 
$\nu(E_n(\gamma))$ for the Arnold Family $f:S^1\to S^1$ given by
\begin{equation}
\label{eq.arnold}
f(x)=x+\theta+k\sin(2\pi x),\quad\theta\in(0,1),\quad k<\frac{1}{2\pi}. 
\end{equation}
See Figure~\ref{fig.arnold}.
For $\theta=1/2$ the map exhibits phase locking and the attractor 
is a period two orbit. For $\theta=1/3$ the family displays quasi-periodic 
behaviour.  We observe that $\nu(E_n(\gamma))$ 
takes values either equal to zero or to one as $n$ varies, as 
predicted in Section~\ref{sec.quasiperiodic}.

\subsection{The Lorenz equations}\label{sec.lorenz}

\begin{figure}[t]
\centering
\psfrag{n}{$n$}
\psfrag{meas}{$\nu(E_n(\gamma))$}
\includegraphics[width=0.49\textwidth]{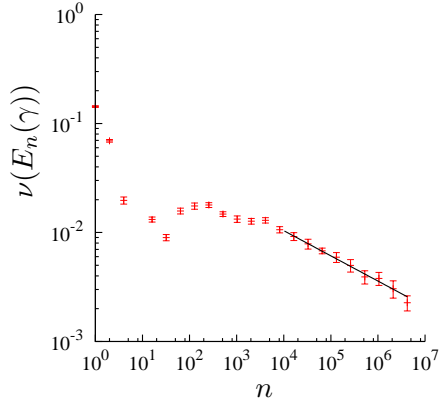}
\caption{
As Figure \ref{fig.doubling}, but for Poincar\'e return map of the Lorenz--63
flow with the section $\Sigma=\{x_3=30\}$. We have used with $\gamma=0.3$;
the initial condition was obtained by iterating the map 500 times using
$(0,0)$ as a starting point. The estimates of $\nu(E_n)$ exhibit
a long transient behaviour, and only the estimates for $n\geq 10^4$ are
fitted to a straight line.}
\label{fig.lorenz63map}
\end{figure}

\begin{figure}[t]
\centering
\psfrag{T}{$T$}
\psfrag{meas}{$\nu(E_T(\gamma))$}
\includegraphics[width=0.49\textwidth]{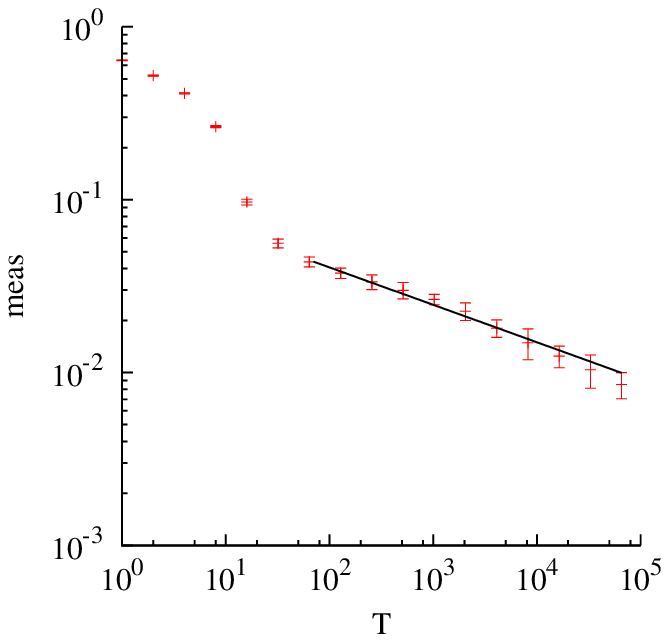}
\includegraphics[width=0.49\textwidth]{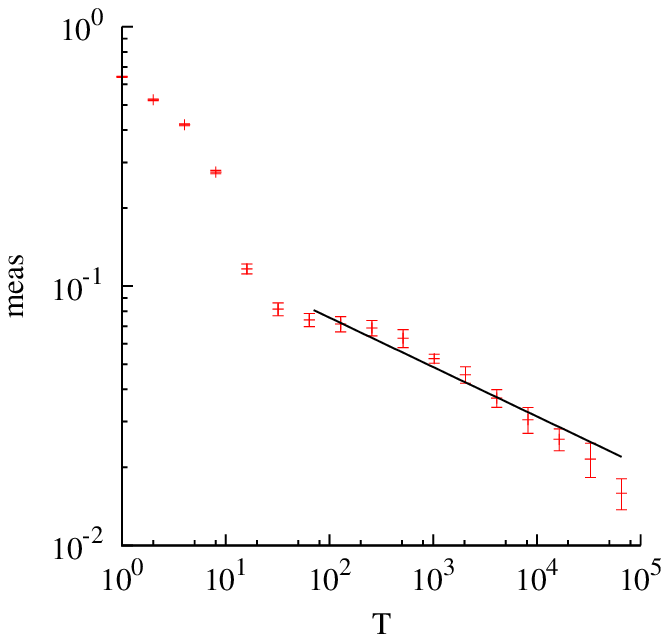}
\caption{
As Figure \ref{fig.doubling}, but for stroboscopic map of the Lorenz--63
flow with the sampling time $\tau=0.01$ (left) and $\tau=0.001$ (right).
In both cases $\gamma=0.4$. 
}
\label{fig.lorenz63strobo}
\end{figure}

Consider the Lorenz--63 equations
\[
\dot{x}_1 = \sigma(x_2-x_1), \quad
\dot{x}_2 = x_1(\rho-x_3) - x_2, \quad
\dot{x}_3 = x_1x_2-\beta x_3,
\]
with the classical parameters $(\sigma,\rho,\beta)=(10,28,8/3)$. From the
resulting semi-flow we can derive two maps: a 2-dimensional Poincar\'e return map
\begin{equation}\label{eq.returnmap}
f_{\rm return} : \Sigma \to \Sigma
\end{equation}
defined on a section $\Sigma$ transverse to the flow, and a 3-dimensional
stroboscopic map
\begin{equation}\label{eq.strobomap}
f_{\rm strobo} : \mathbb{R}^3 \to \mathbb{R}^3
\end{equation}
obtained by sampling the flow at multiples of a chosen sampling time $\tau$.

For the return map \eqref{eq.returnmap} with the section $\Sigma=\{x_3=30\}$ we
observe for $\gamma=0.3$ that $\nu(E_n(\gamma))=O(n^{-0.23})$, see Figure
\ref{fig.lorenz63map}. Let $F_t$ denote the semi-flow of the Lorenz--63 equations.
Then the measure of
\[
E_T(\gamma)
=
\left\{
x\in\mathbb{R}^3:\, \dist(x,F_t(x)) \leq\frac{1}{T^{1/3}} \;\textrm{for some $\delta_0 \leq t \leq T^{\gamma}$}
\right\}
\]
can be approximated by the measure of
\[
E_T(\gamma)
=
\left\{
x\in\mathbb{R}^3:\, \dist(x,f_{\rm strobo}^j(x)) \leq\frac{1}{T^{1/3}} \textrm{ for some } \frac{\delta_0}{\tau} \leq j \leq \frac{T^{\gamma}}{\tau}
\right\}.
\]
Fix $\delta_0=0.01$ and $\gamma=0.4$. For $\tau=0.01$ we observe $\nu(E_T(\gamma)) = O(T^{-0.21})$,
and for $\tau=0.001$ we observe $\nu(E_T(\gamma)) = O(T^{-0.19})$, 
see Figure \ref{fig.lorenz63strobo}. Hence we can conjecture that the rate of convergence
to an EVD for the Lorenz equations is of the order $O(T^{-\alpha})$ for some $\alpha>0.2$.
This appears to an improvement on the sub-polynomial bounds obtained in Theorems \ref{thm.lorenz1}
and \ref{thm.lorenz2} for the corresponding discrete time Poincar\'e maps. We note that proof
of convergence to an EVD for the Lorenz flow (without an error rate) was recently established in \cite{Licheng}.


\section{Proofs of the main results}\label{sec.proofs}
In this section we begin by proving Propositions \ref{prop.extreme1} and \ref{prop.extreme2}.
Finally we prove Theorem \ref{thm.stretched}.

\subsection{Proof of Proposition \ref{prop.extreme1}}
We begin by proving Case 3 in the case where $(f,\XX,\nu)$ is a non-uniformly hyperbolic system. 
The argument simplifies in the non-uniformly expanding case. In particular Case 1 
is proved directly in \cite{holland-nicol}. We will show how Case 2 follows.

For systems with exponential decay of correlations an estimate for $\gamma(n,t)$ is given in \cite{GHN}.
We give the appropriate modifications here in the case of subexponential decay of correlations. The proof relies on the Young tower construction for non-uniformly hyperbolic systems as detailed in \cite{Young1}. The notations we present
here are consistent with those of \cite{GHN}, and in particular we refer to this reference when
there is a strong overlap in the argument.

Consider the set
\[
B_{r,k}(\tilde{x}):= \left\{x:f^{k}(\gamma^s(x))\cap \partial B (\tilde{x}, r)\ne\emptyset\right\},
\]
where $\gamma^s(x)$ is the local stable manifold through $x$, (which exists $\nu$-a.e.). 
We begin with the following estimate:
\begin{lemma}\label{lem:annulus1}
Under assumption (H3) there exist constants $C>0$ and $\tau_1<1$ such that for any $r, k$:
 \[
  \nu(B_{r, k}(\tilde{x}))\le  C \tau_1^{k}.
 \]
\end{lemma}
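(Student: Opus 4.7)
The strategy exploits the exponential contraction of stable manifolds together with the regularity assumption (H3) on the measure of thin annuli.

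\emph{Step 1 (Stable manifold contraction).} Since $(f,\XX,\nu)$ is modelled by a Young tower, the local stable manifolds $\gamma^s(x)$ contract uniformly under iteration away from a small tail. More precisely, there exist $C_s>0$ and $\lambda_s\in(0,1)$ together with a set $G_k\subset\XX$ with $\nu(\XX\setminus G_k)\le C\lambda_s^{k}$ on which
\[
\operatorname{diam}(f^{k}(\gamma^s(x)))\le C_s\lambda_s^{k}.
\]
The tail bound comes from the exponential recurrence to the base of the tower enjoyed by a full-measure set of points.

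\emph{Step 2 (Reduction to an annulus).} If $x\in G_k$ and $f^{k}(\gamma^s(x))\cap\partial B(\tilde{x},r)\ne\emptyset$, then the base point $f^{k}(x)$ lies within distance $C_s\lambda_s^{k}$ of $\partial B(\tilde{x},r)$, so that $f^{k}(x)\in A_{r,k}$, where
\[
A_{r,k}:=B(\tilde{x},r+C_s\lambda_s^{k})\setminus B(\tilde{x},r-C_s\lambda_s^{k}).
\]
Using $f$-invariance of $\nu$ and absorbing the tail,
\[
\nu(B_{r,k}(\tilde{x}))\le\nu(A_{r,k})+C\lambda_s^{k}.
\]

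\emph{Step 3 (Applying (H3)).} To bound $\nu(A_{r,k})$ I split into two regimes. If $r>C_s\lambda_s^{k}$, choose $\delta=\delta(r,k)>1$ so that $r^{\delta}=C_s\lambda_s^{k}$; then (H3) yields
\[
\nu(A_{r,k})\le C\,r^{\sigma\delta}=C(C_s\lambda_s^{k})^{\sigma}\le C'\lambda_s^{\sigma k}.
\]
If instead $r\le C_s\lambda_s^{k}$, the pointwise local dimension estimate \eqref{eq.srb-dim} gives
\[
\nu(A_{r,k})\le\nu(B(\tilde{x},2C_s\lambda_s^{k}))\le(2C_s\lambda_s^{k})^{d_\nu-\epsilon},
\]
which is again geometric in $k$. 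Setting $\tau_1$ to be the maximum of $\lambda_s$, $\lambda_s^{\sigma}$ and $\lambda_s^{d_\nu-\epsilon}$ (all less than one) delivers the claim.

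The principal obstacle I anticipate is keeping the argument uniform in $r$: (H3) is tailored to annulus widths of the form $r^{\delta}$ with $\delta>1$, and one must handle the regime $r\le C_s\lambda_s^{k}$ (where (H3) cannot be invoked directly) via a local-dimension bound without losing the geometric decay in $k$. The secondary technical point is controlling the tail set in the tower on which stable-manifold contraction may fail, which is managed through the standard exponential tail estimates available in the Young-tower framework.
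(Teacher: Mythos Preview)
Your argument is correct and follows essentially the same route as the paper: use exponential contraction along stable manifolds (Young's property P3) to show that $f^{k}(B_{r,k}(\tilde{x}))$ lies in an annulus of width $O(\lambda_s^{k})$ about $\partial B(\tilde{x},r)$, then apply invariance of $\nu$ together with (H3). The paper in fact invokes the contraction uniformly (no tail set $G_k$ is needed in the Young-tower setting) and leaves the $r$-regime split you carry out in Step~3 implicit, so your treatment is if anything more careful than the original on that point.
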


\begin{proof}
From the construction of the Young tower, in particular \cite[Property P3]{Young1} 
there exists an $\tau\in(0, 1)$ and a $C>0$ such that $\dist(f^n(x), f^n(y))\le C\tau^n$ for all 
$y\in \gamma^s(x).$ In particular, this implies that $|f^k( \gamma^s(x) )|\le C\tau^k$ where $|\dots|$ denotes the length with respect to the Lebesgue measure. Therefore, $f^k(B_{r,k}(\tilde{x}))$ lies in an annulus of width $2C\tau^k$ around the boundary of the ball of radius $r$ centered at the point $\tilde{x}$. By Assumption (H3) and invariance of $\nu$ the existence of such a $\tau_1$ 
follows. The constant $\tau_1$ depends on both $\sigma$ and $\tau$.
\end{proof}
We now estimate of $\gamma(n,t)$ using estimate on decay of correlations.
\begin{lemma}\label{lemma:dun-prelim}
Suppose
$\Phi : \XX \to \mathbb{R}$ is Lipschitz and $\Psi_{j,l}$ is the indicator function 
\[
\Psi_{j,l} := 1_{\left\{X_{j+1}\le u_n, X_{j+1}\le u_n, \dots, X_{j+l}\le u_n\right\}}.
\]
Let $\alpha_j$ be an increasing sequence with $\alpha_j\leq j$. Then
\begin{equation}
\left|\int\Phi\Psi_{0,l}\circ f^j d\nu - \int\Phi\text{d}\nu\int\Psi_{0,l}d\nu\right|
=
O(1)\left(\|\Phi\|_\infty \tau_1^{\alpha_j}+\|\Phi\|_{\text{Lip}}\Theta(j-\alpha_j)\right).
\end{equation}
\end{lemma}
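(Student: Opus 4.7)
I would set $\alpha = \alpha_j$ and, using the $f$-invariance of $\nu$ together with the identity $\Psi_{0,l}\circ f^j = (\Psi_{0,l}\circ f^{\alpha})\circ f^{j-\alpha}$, reduce to estimating
\[
\mathcal{D} := \int \Phi\,(\Psi^*\circ f^{j-\alpha})\,d\nu - \int\Phi\,d\nu\int\Psi^*\,d\nu,\qquad \Psi^* := \Psi_{0,l}\circ f^{\alpha}.
\]
The plan is to approximate the non-Lipschitz observable $\Psi^*$ by a Lipschitz function $\hat\Psi$, control the $L^1$-error of this approximation by means of stable-manifold contraction, and then apply the decay of correlations assumption (H1s) to the resulting Lipschitz pair $(\Phi,\hat\Psi)$.

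For the approximation I would exploit the fact that stable leaves contract under $f$. By Property P3 of \cite{Young1}, for $y\in\gamma^s(x)$ one has $\dist(f^{\alpha+i}(x),f^{\alpha+i}(y))\le C\tau^{\alpha+i}$, and hence $\Psi^*(x)=\Psi^*(y)$ unless $x$ lies in the union $\bigcup_{i=1}^{l} B_{r_n,\alpha+i}(\tilde x)$, where $r_n$ is the radius of the ball about $\tilde x$ corresponding to the threshold $u_n$. Applying Lemma~\ref{lem:annulus1} termwise and summing the geometric series, the $\nu$-measure of this union is $O(\tau_1^{\alpha})$. Off the exceptional set $\Psi^*$ factors through the stable holonomy; using the product/Markov structure of the Young tower, one then produces a Lipschitz $\hat\Psi$ with $\|\Psi^*-\hat\Psi\|_{L^1(\nu)}=O(\tau_1^{\alpha})$ and with a Lipschitz norm that can be absorbed into the final $O(1)$ constant.

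Splitting
\[
\mathcal{D} = \int \Phi\,(\Psi^*-\hat\Psi)\circ f^{j-\alpha}\,d\nu + \Bigl(\int \Phi\,\hat\Psi\circ f^{j-\alpha}\,d\nu - \int\Phi\,d\nu\int\hat\Psi\,d\nu\Bigr) - \int\Phi\,d\nu\int(\Psi^*-\hat\Psi)\,d\nu,
\]
the first and third terms are each at most $\|\Phi\|_\infty\|\Psi^*-\hat\Psi\|_{L^1(\nu)} = O(\|\Phi\|_\infty\tau_1^{\alpha})$ (using $f$-invariance for the first), while the middle term is bounded by $\Theta(j-\alpha)\|\Phi\|_{\textrm{Lip}}\|\hat\Psi\|_{\textrm{Lip}} = O(\|\Phi\|_{\textrm{Lip}}\Theta(j-\alpha))$ via (H1s). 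Adding the three contributions yields the claimed estimate. The main obstacle is the construction of $\hat\Psi$: maintaining both $\|\Psi^*-\hat\Psi\|_{L^1(\nu)}=O(\tau_1^{\alpha})$ and a uniform bound on $\|\hat\Psi\|_{\textrm{Lip}}$ requires the full Young-tower structure, and is the step where assumption (H3) enters the proof, indirectly, through Lemma~\ref{lem:annulus1}.
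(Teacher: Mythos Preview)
Your proposal is correct and follows essentially the same route as the paper: shift by $\alpha_j$ using $f$-invariance, replace $\Psi^*=\Psi_{0,l}\circ f^{\alpha_j}$ by a function constant along stable leaves via the holonomy onto a reference unstable manifold, bound the $L^1$ discrepancy by $O(\tau_1^{\alpha_j})$ through Lemma~\ref{lem:annulus1}, and then apply decay of correlations at the remaining gap $j-\alpha_j$. The paper carries this out on the tower $\Delta$ (lifting $\Phi,\Psi$ to $\tilde\Phi,\tilde\Psi$ and defining $\overline\Psi_{j,l}(x,r)=\tilde\Psi_{j,l}(\hat x,r)$ with $\hat x\in\hat\gamma^u$), whereas you phrase everything on $\XX$, but the substance is identical.

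One point worth sharpening: you assert that $\hat\Psi$ can be taken Lipschitz on $\XX$ with a norm absorbable into $O(1)$, and then invoke (H1s). In fact the function $\overline\Psi$ obtained by stable holonomy is not Lipschitz in the Riemannian metric on $\XX$; what makes the argument work is that it descends to the \emph{quotient} (expanding) tower, where decay of correlations holds for H\"older observables in the separation-time metric against $L^\infty$, and $\overline\Psi$ is bounded by $1$. The paper handles this by citing \cite[Lemma 3.1]{GHN} rather than (H1s) directly. Your acknowledgement that ``the full Young-tower structure'' is needed is exactly right; just be aware that the decay estimate used at this step is the tower-level one, not (H1s) applied to a Lipschitz function on $\XX$.
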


\begin{proof}
The proof follows \cite[Lemma 3.1]{GHN} where in our case we just keep track
of the decay of correlation term $\Theta(j)$ (which need not be exponentially 
fast in our case). We present the main details, avoiding as far as possible the technical
construction of the Young tower.
Define the functions $\tilde{\Phi} : \XX\times\mathbb{N} \to \mathbb{R}$ and
$\tilde{\Psi} : \XX\times\mathbb{N} \to \mathbb{R}$ by
\[
\tilde{\Phi}(x,r) = \Phi(f^r(x))
\quad\mbox{and}\quad
\tilde{\Psi}_{j,l}(x,r) = \Psi_{j,l}(f^r(x)).
\]
In the construction of the Young tower, there exists a
a reference set $\Lambda\subset\XX$ with a hyperbolic product structure,
and we can choose a reference unstable manifold 
$\hat\gamma^u\subset \Lambda$ with the property that
for any $x\in\Lambda$, there exists $\hat{x}\in\hat\gamma^u$ with
$\gamma^s(x)\cap\hat\gamma^u=\{\hat{x}\}$.

Define the function $\overline\Psi_{j,l}(x,r) := \tilde{\Psi}_{j,l}(\hat x, r)$. The function $\overline \Psi_{j,l}$ is constant along stable manifolds in $\Lambda$.
The set $\{\overline \Psi_{j,l}\neq\tilde\Psi_{j,l}\}$ consists of points $(x,r)\in\XX\times\mathbb{N}$, with the property
that there exist $x_1,x_2\in\gamma^s(f^r(x))$ such that 
\[
x_1\in \{X_j  \le  u_n, \dots, X_{j+l}\le u_n\},\;\textrm{but}\;
x_2 \notin \{X_j  \le  u_n, \dots, X_{j+l}\le   u_n \}.
\]
This set is contained inside $\cup_{k=j}^{j+l}f^{-k}(B_{u_n,k})$.
By Lemma \ref{lem:annulus1} we  have
\[
\nu\set{\tilde\Psi_{\alpha_j,l}\neq\overline\Psi_{\alpha_j,l}}
\leq
\sum_{k=\alpha_j}^{l}\nu(B_{u_n, k})
=
O(\tau_1^{\alpha_j}).
\] 
In \cite[Lemma 3.1]{GHN} it is shown that for $\alpha_j<j$:
\begin{equation}
\abs{\int \Phi\Psi_{0,l}\circ f^{j}d\nu - \int\Phi d\nu\int \Psi_{0,l}d\nu}=
\abs{\int \Phi\Psi_{\alpha_j,l}\circ f^{j -\alpha_j}d\nu - \int\Phi d\nu\int \Psi_{\alpha_j,l}\circ f^{j-\alpha_j}d\nu}.
\end{equation}
To complete the proof we note that the second term on the right is bounded by:
\begin{gather*}
\abs{\int \Phi(\Psi_{\alpha_j,l}-\overline\Psi_{\alpha_j,l})\circ f^{j -\alpha_j}d\nu - \int\Phi d\nu\int 
(\Psi_{\alpha_j,l}-\Psi_{\alpha_j,l})\circ f^{j-\alpha_j}d\nu}\\
+
\abs{\int \Phi\overline\Psi_{\alpha_j,l}\circ f^{j -\alpha_j}d\nu - \int\Phi d\nu\int \overline\Psi_{\alpha_j,l}d\nu}\\
\leq 
O(1)\left(\|\Phi\|_\infty\nu\set{\overline\Psi_{\alpha_j,l}\neq\tilde\Psi_{\alpha_j,l}}+\|\Phi\|_{\text{Lip}}
\Theta(j-\alpha_j)\right)\\
\le O(1)\left(\|\Phi\|_\infty  \tau_1^{\alpha_j}+\|\Phi\|_{\text{Lip}}\Theta(j-\alpha_j)\right).
\end{gather*}
\end{proof}
To apply Lemma \ref{lemma:dun-prelim} in order to bound $\gamma(n,t)$ we must first 
approximate $\Phi := 1_{\{X_1 > u_n\} }$ by a Lipschitz continuous function $\Phi_B$.
We do this as follows. The set $\{X_1 > u_n \}$ corresponds to
a ball of radius $\ell$ centered at the point $\tilde{x}$. We define $\Phi_B$ to be 1
inside a ball centered at $\tilde{x}$ of radius $\ell$ and decaying to 0 at a linear
rate on the annulus $A(\tilde{x}, n):=B(\tilde{x},\ell')\setminus
B(\tilde{x},\ell)$ so that on the boundary of $B(\tilde{x},\ell')$, $\Phi_B$ vanishes.
The Lipschitz norm of $\Phi_B$ is seen to be bounded by $1/(\ell'-\ell)$.
We have the following estimate:
\begin{equation}\label{eq.correst1}
\begin{split}
 \abs{\int \Phi \Psi_{0,l}\circ f^{j}d\nu - \int\Phi\,d\nu\int \Psi_{0,l}d\nu}
&\leq \abs{\int \Phi_B \Psi_{0,l}\circ f^{j}d\,\nu - \int\Phi_B\,d\nu\int \Psi_{0,l}d\nu}\\
&+\abs{\int (\Phi-\Phi_B) \Psi_{0,l}\circ f^{j}d\,\nu - \int(\Phi-\Phi_B)\,d\nu\int \Psi_{0,l}d\nu},\\
\end{split}
\end{equation}
If we set $\lambda=(\ell'-\ell)$, then for some $\sigma_1>0$ (coming from (H3)) we see that equation
\eqref{eq.correst1} is bounded by
\begin{gather*}
C\left(\|\Phi\|_\infty \tau_1^{\alpha_j}+\lambda^{-1}\Theta(j-\alpha_j)\right)
+(1+\|\Psi_\infty\|)\|\Phi-\Phi_B\|_1\\
\leq C\left(\|\Phi\|_\infty \tau_1^{\alpha_j}+\lambda^{-1}\Theta(j-\alpha_j)\right)
+(1+\|\Psi_\infty\|)\lambda^{\sigma_1}.\\
\end{gather*}
If we put $\alpha_j=j/2$ then we have:
\[
\gamma(n,j)\leq O(1)\left(\tau^{j/2}_1+\lambda^{\sigma_1}+ 
\lambda^{-1}\Theta(j/2)\right).
\]
Regarding the right hand side as a real valued function of $\lambda$, a simple calculus argument implies
that $\lambda=O(1)\Theta(j/2)^{\frac{1}{\sigma_1+1}}$ is the minimizer. Hence we deduce that there exist constants
$\tilde\tau>0$ and $\tilde\sigma>0$ such that
\[
\gamma(n,j)\leq O(1)\max\{\tilde\tau^{j},\Theta(j/2)^{\tilde\sigma}\}.
\]
This completes the proof for Case 3.

For Case 2, we do not need (H3), and the Lipschitz approximation argument above leads directly to the estimate:
\begin{equation}
\begin{split}
\gamma(n,t) &\leq O(1)\left(\|\Phi_B\|_{\mathrm{Lip}}\Theta(t)+\nu\{\Phi_B\neq\Phi\} \right),\\
&\leq O(1)\left( \lambda^{-1}\Theta(t)+\exp\{-C_d|\log\lambda|^{\delta}\}\right),
\end{split}
\end{equation}
where the constant $C_d$ depends on the dimension $d$, and $\delta$ is the constant in the $L^p$ norm
of the density $\rho$. Again, if we consider the right hand side as a function
of $\lambda$, a calculus argument leads to an approximate minimizer:
$$\hat\lambda=\Theta(t)\exp\{C_d|\log\Theta(t)|^{\delta}\}.$$ For this value of $\hat\lambda$, we get the bound:
\begin{equation}
\gamma(n,t)\leq O(1)\exp\left\{-|\log\Theta(t)|^{\delta_1}\right\}
\end{equation}
where $\delta_1$ is a uniform constant (depending on $\delta$). This completes the proof of Case 2.


\subsection{Proof of Proposition \ref{prop.extreme2}}
We start by proving Proposition \ref{prop.extreme2} for non-uniformly expanding systems under the assumptions of (H2b) and (H2c). For 
non-uniformly expanding systems satisfying (H2a) a version of this proposition 
was proved in \cite{holland-nicol} (using the same approach). For 
non-uniformly expanding systems the ergodic measure $\nu$ is absolutely 
continuous 
with respect to Lebesgue (volume) measure. For systems that have less regular
SRB measures (such as those singular with respect to volume), we show how the 
relevant arguments are adapted.

\paragraph{Proof in the non-uniformly expanding case}
For a function $\varphi\in L^1(m)$ we define the Hardy--Littlewood maximal 
function
\[
\mathcal{M}(x):=\sup_{a>0}\frac{1}{m(B(x,a))}\int_{B(x,a)}\varphi(y) dm(y).
\]
A theorem of Hardy and Littlewood~\cite{Rudin},  implies that
\begin{equation}\label{eq.HLbound}
m(|\mathcal{M}(x)|>\lambda)\le \frac{\|\varphi \|_1}{\lambda},
\end{equation}
where $\|\cdot\|_1$ is the $L^1$ norm with respect to $m$.
Recalling
\[
E_n
=
\left\{x:\,\dist(x,f^j(x))\leq\frac{1}{n^{1/d}}\,\textrm{for some}\,j\leq\tilde{g}(n)\right\},
\]
let $\rho(x)$ denote the density of $\nu$ with respect to $m$ and let $\mathcal{M}_n(x)$ denote the
maximal function of $\varphi_{n}(x):=1_{E_n}(x)\rho(x)$.
For constants $a,b>0$ to be fixed later consider sequences $\alpha_n=e^{n^{b}}$
and $\lambda_n=n^{-a}$ for some $b\in(0,1)$ and $a>0$. Inequality
(\ref{eq.HLbound}) gives
\[
m(|\mathcal{M}_{\alpha_n}(x)|>\lambda_n)
\leq
\frac{\nu(E_{\alpha_n})}{\lambda_n}
\leq
C(\log\alpha_n)^{-\alpha}n^{a}=Cn^{a-b\alpha}.
\]
If $\alpha b-a>1$ (first constraint required on $a$ and $b$), then
the First Borel--Cantelli Lemma implies for $\nu$ a.e. $x$ there exists an $N:=N(x)$
such that for all $n\geq N$ we have $|\mathcal{M}_{\alpha_n}(\tilde{x})|<\lambda_n$.
For a non-uniformly expanding system recall that $\nu$ is assumed 
absolutely continuous with respect to $m$. 
Hence, for all $n$ sufficiently large
\begin{equation}\label{eq.meas-ball}
\begin{split}
\nu(\left\{x:\dist(x,\tilde{x})<\alpha_n^{-1/d}\right\}\cap E_{\alpha_n})
& \leq
\int_{B(\tilde{x},\alpha^{-1}_n)}\varphi_{\alpha_n}(y) dm(y) \\
& \leq
C_d\alpha^{-1}_n\mathcal{M}_{\alpha_n}(\tilde{x}) \\
&\leq C_d\alpha^{-1}_n\lambda_n=O\left( e^{-n^{b}}n^{-a}\right),
\end{split}
\end{equation}
where $C_d$ depends on the dimension.
Denote $A := \{X_1>u_k, X_j>u_k\}$ with $2 \leq j \leq g(k)$, and $g(n)=\tilde{g}(n)^{(1-\epsilon)}$ for
some $\epsilon>0$. We assume that $\tilde{g}(n)$ has the representations given in (H2b). For observables of the form $\phi(x)=\psi(\dist(x,\tilde{x}))$, we have $\psi^{-1}(u_k)\approx 1/k^{1/d}$. Hence there exists a $v>0$ such that
\[
A\subset
\left\{x:\dist(\tilde{x},x)\leq\frac{v}{k^{1/d}} \; \dist(\tilde{x},f^j(x))\leq
\frac{v}{k^{1/d}} \textrm{ for some } j\leq g(k)\right\}. 
\]
Given the sequence $\alpha_n$, let $k^{1/d}/(2v)\in[\alpha_n,\alpha_{n+1})$. Then (by monotonicity of $g(n)$),
\[
A \subset
\left\{x:\dist(\tilde{x},x)\leq\frac{1}{2\alpha_n}, \; \dist(\tilde{x},f^j(x))\leq\frac{1}{2\alpha_n} \textrm{ for some } j\leq g((2v)\alpha_{n+1})\right\}.
\]
Applying the triangle inequality $\dist(x,f^j(x)) \leq \dist(\tilde{x},x) + \dist(\tilde{x},f^j(x))$ gives
\[
A
\subset
\left\{x:\dist(\tilde{x},x)\leq\frac{1}{\alpha_n}, \; \dist(x,f^j(x))\leq\frac{1}{\alpha_n} \textrm{ for some } j\leq g((2v)\alpha_{n+1})\right\}.
\]
Since $\alpha_n=\lfloor e^{n^{b}}\rfloor $ (with $b\in(0,1)$) we have that 
$\lim_{n\to\infty}\alpha_{n+1}/\alpha_n=1$. By
the growth properties of $g$ and $\tilde{g}$ (as given in Proposition 
\ref{prop.extreme2}), there exists $\kappa_{v}>0$ and a sequence $c_n\to 0$ such that for all sufficiently large $\alpha_n$: $$g((2v)\alpha_{n+1})\leq g(2(2v)\alpha_n)\leq c_n\tilde{g}((2(2v)\alpha_n)\leq c_n\kappa_v\tilde{g}(\alpha_n).$$ 
Moreover, there exists $N$ such that $\forall n\geq N$ we have $c_n\kappa_v<1$, and hence
\[
A
\subset
\left\{x:\dist(\tilde{x},x)\leq\frac{1}{\alpha_n}, \; \dist(x,f^j(x))\leq\frac{1}{\alpha_n} \textrm{ for some } j\leq \tilde{g}(\alpha_n)\right\}.
\]
Applying inequality (\ref{eq.meas-ball}) we obtain:
\[
\nu(X_1>u_k, X_j>u_k) =  O\left(k^{-1}(\log k)^{-a/b}\right) \quad\mbox{for all } k > N,
\]
so that
\begin{equation}\label{eq.maximal.final}
\sum_{j=1}^{g(k)} \nu (X_1 > u_k, X_j > u_k)
=
O\left(\frac{g(k)}{k(\log k)^{a/b}}\right) \quad\mbox{for all } k > N.
\end{equation}
To complete the proof, we now optimize the ratio $a/b$ appearing in equation \eqref{eq.maximal.final},
subject to $a>0$, $b\in(0,1)$ and  $\alpha b-a>1$. We find that the optimal ratio 
$\tilde\alpha$ can be chosen arbitrarily close to $\alpha-1$. Hence we
obtain for all $k>N$:
\[
\sum_{j=1}^{g(k)} \nu (X_1 > u_k, X_j > u_k)
=O(k^{-1}(\log k)^{-\tilde\alpha}g(k)),
\]
valid for all $\tilde\alpha<\alpha-1$. Hence equation \eqref{eq:dprime2} is satisfied.

We now consider the case of (H2c). Using the maximal function argument above, 
we take 
sequences $\alpha_n=e^{n^{b}}$ and $\lambda_n=e^{-n^{a}}$ for some $a,b\in(0,1)$. 
In this case for $\nu$-a.e. $\tilde{x}$, there exists $N$ such that
\begin{equation}
\sum_{j=2}^{g(k)} \nu (X_1 > u_k, X_j > u_k)
\leq O(1) g(k)k^{-1}\exp\{-(\log k)^{a/b}\} ,\quad\mbox{for all } k > N.
\end{equation}
Again we can optimize the ratio $a/b$ subject to the constraint $\alpha>a/b$. The optimal
ratio $\tilde\alpha$  can be chosen arbitrarily close to $\alpha$.

\paragraph{Proof in the non-uniformly hyperbolic case}
Consider the case under the general assumption that $\nu$ is
SRB with local dimension $d_{\nu}$. We work with the following maximal 
function $\mathcal{M}_{\nu}$ where
\[
\mathcal{M}_{\nu}(x):=\sup_{a>0}\frac{1}{\nu(B(x,a))}\int_{B(x,a)}\varphi(y) 
d\nu(y).
\]
An application of the Besicovitch covering theorem \cite{fefferman} gives
\begin{equation}\label{eq.Bes-bound}
\nu(|\mathcal{M}_{\nu}(x)|>\lambda)\le \frac{\|\varphi \|_1}{\lambda},
\end{equation}
where in this case $\|\cdot\|_1$ is the $L^1$ norm with respect to $\nu$.
It suffices to show how the calculations in the non-uniformly
expanding case extend. We will do this under the assumption of the
weak recurrence condition (H2b). The same argument can be applied under
assumptions (H2a) or (H2b).

As in the non-uniformly expanding case, for constants $a,b>0$ let 
$\alpha_n=e^{n^{b}}$ and $\lambda_n=n^{-a}$ for some $b\in(0,1)$ and $a>0$. 
As before, if $\alpha b-a>1$, inequality \eqref{eq.Bes-bound} implies that for $\nu$ a.e. $x$ there exists an 
$N:=N(x)$ such that for all $n\geq N$ we have 
$|\mathcal{M}_{\alpha_n}(\tilde{x})|<\lambda_n$.
Hence, for all $n$ sufficiently large
\begin{equation}\label{eq.meas-ball2}
\begin{split}
\nu(\left\{x:\dist(x,\tilde{x})<\alpha_n^{-1/d}\right\}\cap E_{\alpha_n})
& \leq
\int_{B(\tilde{x},\alpha^{-1/d}_n)}\varphi_{\alpha_n}(y) d\nu(y) \\
& \leq
\nu(B(\tilde{x},\alpha^{-1/d}_n))\mathcal{M}_{\alpha_n}(\tilde{x}) \\
&\leq \nu(B(\tilde{x},\alpha^{-1/d}_n))\lambda_n.
\end{split}
\end{equation}
Denote $A := \{X_1>u_k, X_j>u_k\}$ with $2 \leq j \leq g(k)$, and $g(n)=\tilde{g}(n)^{(1-\epsilon)}$ for
some $\epsilon>0$. We assume that $\tilde{g}(n)$ has the representations given in (H2b). For observables of the form 
$\phi(x)=\psi(\dist(x,\tilde{x}))$, recall that the sequence $u_n$ is chosen
so that $n\nu\{\dist(x,\tilde{x})\leq \psi^{-1}(u_n)\}\to\tau$. This implies
that we have $\psi^{-1}(u_k)\in[1/k^{d_{\nu}+\epsilon},1/k^{d_{\nu}-\epsilon}]$, where
the constant $\epsilon$ is due to the fluctuation of $\nu$ on small balls.
If we let $w_k =\psi^{-1}(u_k)$, then
\[
A\subset
\left\{x:\dist(\tilde{x},x)\leq w_k\; \dist(\tilde{x},f^j(x))\leq w_k \textrm{ for some } j\leq g(k)\right\}. 
\]
Given the sequence $\alpha_n$, let $(1/2w_k)^{d}\in[\alpha_n,\alpha_{n+1})$. 
Using
again the monotonicity of $g(n)$ and the triangle inequality we obtain
\[
A
\subset
\left\{x:\dist(\tilde{x},x)\leq\frac{1}{\alpha^{1/d}_n}, \; \dist(x,f^j(x))\leq\frac{1}{\alpha^{1/d}_n} \textrm{ for some } j\leq g(\alpha^{\tilde{d}}_{n+1})
\right\},
\]
where $\tilde{d}>0$ is a constant depending on $d$ and $d_{\nu}$.
Since $\alpha_n=\lfloor e^{n^{b}}\rfloor $ (with $b\in(0,1)$) we have that 
$\lim_{n\to\infty}\alpha_{n+1}/\alpha_n=1$. By
the growth properties of $g(n)$ and $\tilde{g}(n)$ (as given in Proposition 
\ref{prop.extreme2}), there exists a sequence $c_n\to 0$ such that for all 
sufficiently large $\alpha_n$: $$g(\alpha^{\tilde{d}}_{n+1})
\leq c_n\tilde{g}(\alpha_n).$$ 
Moreover, there exists $N$ such that $\forall n\geq N$ we have $c_n<1$, 
and hence
\[
A
\subset
\left\{x:\dist(\tilde{x},x)\leq\frac{1}{\alpha^{1/d}_n}, \; \dist(x,f^j(x))\leq\frac{1}{\alpha^{1/d}_n} \textrm{ for some } j\leq \tilde{g}(\alpha_n)\right\}.
\]
We now use the precise asymptotics of $\alpha_n$ together with the fact that
$k$ is chosen so that $(1/2w_k)^{d}\in[\alpha_n,\alpha_{n+1})$. Due
to fluctuations of $\nu$ on small balls, we cannot achieve the same 
inequality obtained in \eqref{eq.meas-ball}. In general 
the measure $\nu(B(\tilde{x},\alpha^{-1/d}_n))$ is not uniformly
comparable to  $\nu(B(\tilde{x},\alpha^{-1/d}_{n+1})$. Noting
that $2w_k\in[\alpha^{-1/d}_{n+1},\alpha^{-1/d}_{n}]$ and the fact
that $\alpha_n/\alpha_{n+1}\to 1$ we obtain: 
\begin{equation}\label{eq.meas-ball3}
\begin{split}
\nu(X_1>u_k, X_j>u_k) &\leq \nu(B(\tilde{x},4w_k))
\left(\log(1/4w_k)\right)^{-a/b}\\
&\leq C\nu(B(\tilde{x},4w_k))\left|\log\nu(B(\tilde{x},w_k))\right|^{-a/b},
\end{split}
\end{equation}
where $C$ depends on $d$ and $d_{\nu}$. By \eqref{eq.meas-ball2}, this
latter inequality holds for all $k$ sufficiently large (and $j<g(k)$).
To express the right-hand side of equation \eqref{eq.meas-ball3} in terms of $\nu(B(\tilde{x},w_k))$,
we use an iterated version of Lemma \ref{lem.Bes} given in section \ref{sec.SRT} as applied
to the set: 
$$\widetilde{\mathcal{F}}(\lambda,r,4):=\{x\in\XX:\nu(B(x,4r))\geq\lambda\nu(B(x,r))\}.$$
In particular we have
$$\widetilde{\mathcal{F}}(\lambda,r,4)
\subset\mathcal{F}(\sqrt{\lambda},2r)\cup  \mathcal{F}(\sqrt{\lambda},r),$$
and hence we have $\nu(\widetilde{\mathcal{F}}(\lambda,r))<C\lambda^{-1/2}.$
Along the sequence $w_k$, we have that for 
all $\tilde{x}\not\in\tilde{\mathcal{F}}(\lambda,w_k)$:
\begin{equation}
\nu(X_1>u_k, X_j>u_k)\leq C\lambda\nu(B(\tilde{x},w_k))\left|\log\nu(B(\tilde{x},w_k)\right|^{-a/b}.
\end{equation}
From Lemma \ref{lem.Bes} we specify $\lambda:=\lambda(r)=|\log r|^{2s}$, 
and take a subsequence $w_{k_n}=\beta_n:=e^{-n^c}$ for some $c\in(0,1)$. 
We choose $s>1$ chosen so that $cs>1$. Along the subsequence $w_{k_n}$ we have
that $\nu(\tilde{\mathcal{F}}(\lambda(\beta_n),\beta_n))\leq n^{-cs}$, and hence by the First Borel-Cantelli Lemma, there exists 
$n_0(\tilde{x})$ such that $\tilde{x}\not\in\widetilde{\mathcal{F}}(\lambda(\beta_n),\beta_n)$ 
for all $n>n_0$. In particular we have
\begin{equation}
\nu(X_1>u_{k_n}, X_j>u_{k_n})\leq C\nu(B(\tilde{x},\beta_n))\left|\log\nu(B(\tilde{x},\beta_n)\right|^{2s-a/b}.
\end{equation}
We now extend this estimate to all times $k$ such that $w_{k}<\beta_{n_0}$. 
Take $n>n_0$ and let 
$w_{k}\in[\beta_{n+1},\beta_{n}]$. Then we have:
\begin{equation}
\begin{split}
\nu(X_1>u_{k}, X_j>u_{k}) &\leq C\nu(B(\tilde{x},\beta_n))\left|\log\nu(B(\tilde{x},\beta_n)\right|^{2s-a/b}\\
&\leq C\lambda(\beta_n/4)\nu(B(\tilde{x},\beta_n/4))\left|\log\nu(B(\tilde{x},\beta_n/4)
\right|^{2s-a/b}\\
&\leq C\nu(B(\tilde{x},w_k))\left|\log\nu(B(\tilde{x},w_k)\right|^{4s-a/b}.
\end{split}
\end{equation}
In the second line we used Lemma \ref{lem.Bes}, and in the third line the fact that $w_{k}>\beta_{n+1}>\beta_{n}/4$.
It follows that
\[
\sum_{j=1}^{g(k)} \nu (X_1 > u_k, X_j > u_k)
=
O\left(\frac{g(k)}{k(\log k)^{-4s+a/b}}\right) \quad\mbox{for all } k > k_0.
\]
We now maximize $a/b-4s$ subject to $c\in(0,1)$, $sc>1$, and $\alpha b-a>1$. The optimal value
$\tilde\alpha$ can be chosen arbitrarily close to $\alpha-5$. 
\begin{rmk}\label{rmk.h2a.hyp} 
In the case of assumption (H2a), the proof is not so delicate. 
By the use of local dimension arguments Lemma \ref{lem.Bes} can be avoided.
Under assumption (H2a) we can take $\alpha_n=n^{-b}$ and $\lambda_n=n^{-a}$
for any $a,b>0$. If $b\alpha-a>1$ then for 
$\nu$-a.e $x\in\XX$, and for all $\epsilon>0$,
we have for all $k>k_0$:
\begin{equation}
\nu(X_1>u_k, X_j>u_k)\leq C\nu(B(\tilde{x},w_k))^{1-\epsilon-a/b}.
\end{equation}
The constant $\epsilon$ comes from the definition of local dimension, and
the constant $k_0$ depends on $\tilde{x}$ and $\epsilon$. If
we optimize over $a$ and $b$ it follows that
\[
\sum_{j=1}^{g(k)} \nu (X_1 > u_k, X_j > u_k)
=
O\left(\frac{g(k)}{k^{1-\alpha-\epsilon}}\right) \quad\mbox{for all } k > k_0.
\]
\end{rmk}

\subsection{Proof of Theorem \ref{thm.stretched}}\label{sec.proofstretched}
We prove Theorem \ref{thm.stretched} in several steps following the strategy presented in \cite{Collet}. 
In fact our proof
optimizes some of the calculations presented within, and allows us to deduce convergence to an 
EVD for non-uniformly expanding systems that either have an invariant density that does not belong to $L^p$, (for any $p>1$), and/or have sub-exponential decay of correlations. Along the way we get an estimate on the convergence rate.

In the first step we show that stretched exponential decay
of correlations implies that the invariant density $\rho(x)$ is bounded by function $h(x)$, with
$\int h(x)(\log h(x))^{q} dm<\infty$ for some $q>1$. We remark that $h(x)$ need not be in $L^{p}$ for any $p>1$. 
The result we present, namely Lemma \ref{lem.density} will also lead us to deduce a bound on the regularity of the invariant density for the Alves-Viana map. In the second step we show that intermediate quantitative recurrence statistics hold in the sense on (H2c). We then apply the blocking argument to deduce the convergence result with appropriate error term
in the convergence rate.

\begin{lemma}\label{lem.density}
Suppose that $(f,\XX,\nu)$ is a non-uniformly expanding system which admits a Young tower having 
$m\{R>n\}=O\left(\theta^{n^{\beta}}_0\right)$ for some $\theta_0,\beta<1$. Assume also that $\|Df\|_{\infty}<\infty$. Then there exists $\hat\beta<1$, 
such that for any measurable set $A\subset\XX$:
$$\nu(A)=O\left(\exp\{-|\log m(A)|^{\hat\beta}\}\right).$$
\end{lemma}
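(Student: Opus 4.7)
The plan is to reduce the lemma to a quantitative exponential Orlicz integrability of the invariant density $\rho$, and then deduce the bound on $\nu(A)$ by a standard layer-cake / interpolation argument.

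First I would establish that $\rho$ (or some $h\ge\rho$) satisfies a tail estimate of the form $m\{\rho>t\}\le C\exp\{-c(\log t)^{\sigma}\}$ for some $\sigma\in(0,\beta]$ and $c>0$. The starting point is the standard representation of $\rho$ coming from the Young tower,
\[
\rho(y)=\sum_{\ell\ge 0}\sum_{x\in\Lambda\cap f^{-\ell}(y),\ R(x)>\ell}\frac{\tilde h(x)}{|\det Df^\ell(x)|},
\]
where $\tilde h$ denotes the (uniformly bounded) density of $\tilde\nu$ on the base $\Lambda$. Large values of $\rho(y)$ force at least one preimage branch $(x,\ell)$ at which the Jacobian $|\det Df^\ell(x)|$ is atypically small. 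Combining the Lipschitz hypothesis $\|Df\|_{\infty}\le L$ (which bounds $|\det Df^{\ell}|$ from above by $L^{\ell d}$ and hence constrains the contribution of each branch) with the stretched-exponential tail $m\{R>n\}=O(\theta_0^{n^\beta})$, a branch-by-branch large-deviation argument shows that the set of points in $\Lambda$ from which $f^\ell$ exhibits anomalously slow expansion has $m$-measure bounded by $C\theta_0^{\ell^{\beta}}$. Translating this back to the pushforward produces the claimed Orlicz tail on $\rho$ with exponent $\sigma\in(0,\beta]$.

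Second, assuming the Orlicz tail, the lemma follows from a layer-cake split at a threshold $T$ to be optimised: for $A$ with $m(A)=\epsilon$,
\[
\nu(A)=\int_A\rho\,dm\le T\,m(A)+\int_{\rho>T}\rho\,dm.
\]
The layer-cake identity gives $\int_{\rho>T}\rho\,dm=T\,m\{\rho>T\}+\int_T^\infty m\{\rho>s\}\,ds$; plugging in the Orlicz bound and changing variables $u=\log s$ in the integral yields
\[
\int_{\rho>T}\rho\,dm\le C(\log T)^{1-\sigma}\exp\{-c(\log T)^{\sigma}\}.
\]
Taking $\log T=\tfrac12|\log m(A)|$, so that $T\,m(A)=m(A)^{1/2}=\exp\{-|\log m(A)|/2\}$, balances the two contributions; for $\sigma<1$ the second dominates and one reads off
\[
\nu(A)\le C\exp\bigl\{-c'|\log m(A)|^{\sigma}\bigr\},
\]
establishing the lemma with $\hat\beta=\sigma<1$.

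The principal obstacle is the first step: transferring the return-time exponent $\beta$ into the Orlicz exponent $\sigma$ via a careful branch-by-branch analysis of $|\det Df^\ell|$ on the sub-level sets $\{R>\ell\}$. This is where the two hypotheses genuinely cooperate: $\|Df\|_{\infty}<\infty$ upper-bounds each branch contribution to $\rho$ while $m\{R>n\}=O(\theta_0^{n^\beta})$ controls the measure of branches that stay in the base long enough to accumulate small Jacobian. As a consistency check, when the returns have genuine exponential tails ($\beta=1$) the same scheme yields $\sigma=1$, recovering the $L^\infty$ bound $\nu(A)\le Cm(A)$ familiar for Young--SRB measures in the exponential-tail case.
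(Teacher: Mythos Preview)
Your layer-cake second step, with the threshold $T=m(A)^{-1/2}$, is exactly the split the paper uses. The difference is that the paper performs it directly on the tower decomposition of $\nu(A)$ rather than passing through a pointwise Orlicz tail on $\rho$. Writing $\tilde A_{j,l}\subset\Lambda_l$ for the $f^{j}$-preimage of $A\cap f^{j}(\Lambda_l)$, one has $\nu(A)\le C\sum_{l}\sum_{j<R_l}\nu_0(\tilde A_{j,l})$; bounded distortion of the full return $f^{R_l}\colon\Lambda_l\to\Lambda$ together with $K:=\|Df\|_{\infty}<\infty$ gives $\nu_0(\tilde A_{j,l})\le C\, K^{R_l-j}\, m(A)\, \nu_0(\Lambda_l)$. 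Splitting the double sum according to whether $K^{R_l-j}$ is below or above $g(m(A))=m(A)^{-1/2}$ yields one piece bounded by $m(A)^{1/2}$ and another bounded by $\sum_{R_l>c|\log m(A)|}R_l\,m(\Lambda_l)$, which the stretched-exponential tail hypothesis handles. This is shorter than your route because it never needs a pointwise bound on $\rho(y)$; it works entirely with measures of sets.

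There is also a concrete slip in your first step. You write that $\|Df\|_{\infty}\le L$ ``bounds $|\det Df^{\ell}|$ from above by $L^{\ell d}$ and hence constrains the contribution of each branch,'' but the branch contribution to $\rho$ is $1/|\det Df^{j}|$, so an upper bound on the Jacobian goes the wrong way. The correct mechanism is indirect: bounded distortion of the full return gives $|\det Df^{R_l}|\asymp m(\Lambda_l)^{-1}$ on $\Lambda_l$, and then $|\det Df^{R_l-j}|\le L^{(R_l-j)d}$ for the remaining steps forces the \emph{lower} bound $|\det Df^{j}|\ge L^{-(R_l-j)d}m(\Lambda_l)^{-1}$, hence a branch contribution $\le C\,L^{(R_l-j)d}m(\Lambda_l)$. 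Even with this fixed, converting the per-branch estimate into a tail bound $m\{\rho>t\}\le C\exp\{-c(\log t)^{\sigma}\}$ still requires summing over all tower levels and partition elements projecting to a given $y$, a step your sketch does not address; this is precisely the complication the paper's set-level argument sidesteps.
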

\begin{proof}
We follow the proof of \cite[Lemma 2.2]{Collet}. In the Young tower construction let $\Lambda_{l}\subset\Lambda$ denote
one of the partition elements, and let $A_{l,j}=A\cap f^{j}(\Lambda_l)$. Let $\tilde{A}_{j,l}\subset\Lambda_l$ be such 
that $f^{j}(A_{j,l})=A\cap f^{j}(\Lambda_l).$ If $K=\|f'\|_{\infty}$, and $R_{l}$ is the return time associated to 
$\Lambda_l$ then we have:
\begin{equation*}
\frac{|\tilde{A}_{j,l}|}{|\Lambda_l|}\leq K^{R_l-j}m(A).
\end{equation*}
Since $\nu_0$ is uniformly equivalent to $m$, we deduce that
$$\nu_0(\tilde{A}_{j,l})\leq K^{R_l-j}m(A)\nu_0(\Lambda_l).$$
Let $g(\cdot)$ be a monotone function with $\lim_{y\to 0}g(y)=\infty$. Then
\begin{equation}
\underset{j,l:K^{R_l-j}<g(m(A))}{\sum}\nu_0(\tilde{A}_{j,l})\leq g(m(A))m(A).
\end{equation}
If $K^{R_l}-j>g(m(A))$, then $R_{l}-j>\log g(m(A))/\log K,$ and we obtain by stretched exponential decay of correlations:
\begin{equation}
\underset{j,l:K^{R_l-j}>g(m(A))}{\sum}\nu_0(\tilde{A}_{j,l})\leq
\underset{R_l>\log g(m(A))/\log K}{\sum} R_lm(\Lambda_l)\leq \exp\{-(\log g(m(A)))^{\beta_1}\},
\end{equation}
where $\beta_1<1$ depends on $\theta_0$ and $K$. If we now choose $g(m(A))=m(A)^{-1/2}$ then the 
result follows.
\end{proof}
The next result gives the quantitative recurrence estimate for $\nu(E_n)$.
\begin{prop}\label{prop.En-sub}
Suppose that $(f,\XX,\nu)$ is a non-uniformly expanding system which admits a Young tower having 
$m\{R>n\}=O\left(\theta^{n^{\beta}}_0\right)$ for some $\beta<1$. Assume also that $\|Df\|_{\infty}<\infty$.
Then for all $\gamma>1$, there exists $\alpha<1$,
such that (H2c) holds in the sense that $\tilde{g}(n)\sim (\log n)^{\gamma}$ implies 
$\nu(E_n)\leq\exp(-(\log n)^{\alpha})$.
\end{prop}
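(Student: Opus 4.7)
The plan is to use Lemma \ref{lem.density} as the bridge from $m$ to $\nu$: once we establish $m(E_n) \leq C n^{-\eta}$ for some $\eta > 0$, the lemma yields
\begin{equation*}
\nu(E_n) \leq O\left(\exp\{-(\eta \log n)^{\hat\beta}\}\right) = O\left(\exp\{-(\log n)^{\hat\beta}\}\right),
\end{equation*}
which is exactly (H2c) with $\alpha = \hat\beta < 1$. The task therefore reduces to proving a polynomial Lebesgue bound on $E_n$ when $\tilde g(n) = (\log n)^\gamma$.

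Write $E_n \subseteq \bigcup_{j=1}^{\tilde g(n)} F_j(n)$ with $F_j(n) := \{x : \dist(x, f^j x) \leq n^{-1/d}\}$. I would split $\XX$ by the time to hit the tower base: let $\Delta_L$ denote the projection to $\XX$ of the tower above level $L$, so that by the stretched-exponential tail,
\begin{equation*}
m(\Delta_L) \leq \sum_{l \geq L} m\{R > l\} \leq C \theta_0^{L^\beta}.
\end{equation*}
Choosing $L := (\log n)^\delta$ with $\delta\beta > 1$ makes this at most $n^{-c}$ for any desired $c > 0$. On the complement $\XX \setminus \Delta_L$, each $x$ may be written as $f^l(\hat x)$ with $\hat x \in \Lambda$ and $l < L$, so the near-return $\dist(x, f^j x) \leq n^{-1/d}$ pulls back, via $\|Df\|_\infty \leq K$, to a near-return condition for $\hat x$ involving $f$-iterates up to time $j + L$ at scale comparable to $K^{-L} n^{-1/d}$.

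On $\Lambda$ the induced map $\widehat F = f^R$ is uniformly expanding with bounded distortion, and its Markov cylinders of depth $k$ have $m$-measure $O(\lambda_0^{-k})$ for some $\lambda_0 > 1$. The plan is to bound the Lebesgue measure of the pulled-back near-periodic set by decomposing into these cylinders and summing: $j$ iterates of $f$ correspond to an ensemble of $\widehat F$-itineraries whose return times $R_{i_1} + \cdots + R_{i_k}$ total in a window around $j$, and the stretched-exponential tail of $R$ restricts the itineraries that contribute significantly. Summing over $j \leq (\log n)^\gamma$ and $l < L$, and absorbing the distortion loss $K^{Ld} = \exp(O((\log n)^\delta))$ (which is subpolynomial in $n$) against the gain from uniform expansion of $\widehat F$, one obtains $m\bigl(\bigcup_{j} F_j(n) \setminus \Delta_L\bigr) \leq n^{-\eta}$ for some $\eta > 0$. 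Combined with the bound on $m(\Delta_L)$, this gives $m(E_n) \leq n^{-\eta}$.

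The main technical obstacle will be the combinatorial bookkeeping in the base estimate: $j$-periodicity for $f$ does not correspond cleanly to $k$-periodicity for $\widehat F$, since each application of $\widehat F$ advances $f$ by a variable return time $R_i$. One must therefore sum over all admissible return-time compositions $R_{i_1} + \cdots + R_{i_k}$ totalling roughly $j$, and extracting a positive $\eta$ requires careful balancing of this combinatorial factor against the exponential gain from $\lambda_0$-expansion and the $K^{Ld}$ distortion loss from the pull-back. The stretched-exponential tail on $R$ and the freedom to take $\delta$ independent of $\gamma$ are what keep the final bound polynomial in $n$.
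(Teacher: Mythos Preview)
Your strategy differs from the paper's, and as written it contains a concrete error that breaks the argument.

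\textbf{The error.} You need $\delta\beta>1$ so that $m(\Delta_L)\leq C\theta_0^{L^\beta}\leq n^{-c}$ with $L=(\log n)^\delta$. But since $\beta<1$, this forces $\delta>1/\beta>1$. Then the distortion loss is
\[
K^{Ld}=\exp\bigl(d\log K\cdot(\log n)^{\delta}\bigr),
\]
and for $\delta>1$ this grows \emph{faster} than any power of $n$, not slower: it is superpolynomial, not subpolynomial as you claim. Hence it cannot be ``absorbed'' against a polynomial gain from the base. The two requirements --- $m(\Delta_L)$ polynomially small and $K^{Ld}$ subpolynomial --- are mutually incompatible when $\beta<1$. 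If you retreat to $\delta<1$ to control $K^{Ld}$, then $m(\Delta_L)\leq\exp\{-c(\log n)^{\delta\beta}\}$ is only sub-polynomial, so you lose the claimed bound $m(E_n)\leq n^{-\eta}$. (You could still feed a sub-polynomial $m$-bound through Lemma~\ref{lem.density} and recover an (H2c)-type estimate with a smaller exponent, but then the base combinatorics you flag as the ``main obstacle'' still has to be carried out, and your sketch does not do this.)

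\textbf{What the paper does instead.} The paper never attempts a Lebesgue bound on $E_n$. It works directly with $\nu$, invoking Collet's estimate
\[
\nu(F_j(\epsilon))\leq O(1)\Bigl\{j^2\exp\{-|\log\epsilon|^{\hat\beta}\}+\exp\{-cj^{\beta_1}\}\Bigr\},
\]
where Lemma~\ref{lem.density} enters only to control tail terms $\nu(R>\cdot)$ inside this bound. The estimate is useless for small $j$ (the second term does not decay), and the key device is the inclusion $F_j(\epsilon)\subset F_{rj}(\tilde K^{rj}\epsilon)$, obtained by iterating a near-return $j$ times. Choosing $r$ so that $rj\approx a_2\log n$ (with $a_2$ small enough that $\tilde K^{rj}$ is a fixed power of $n$ strictly less than $1$) converts the small-$j$ case into a large-$j$ case at polynomial cost in $\epsilon$. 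This replaces your tower-level decomposition and base bookkeeping entirely, and sidesteps the $K^{Ld}$-versus-$m(\Delta_L)$ tension that your approach runs into.
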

\begin{proof}
The proof extends that of \cite[Proposition 2.3]{Collet} by optimizing the estimates. The proof depends only on the Markov
structure of the tower, the rate of decay of correlations, and the regularity of the invariant density. Let
$F_n(\epsilon)=\{x:\dist(x,f^n(x))<\epsilon\}.$ 
The key estimate derived in the proof of 
\cite[Proposition 2.3]{Collet} is:
\begin{equation}
\nu(F_n(\epsilon))\leq O(1)\left\{\frac{\epsilon}{\delta}+\sum_{s>an}\nu_0(R>n)+n^2\nu(R>bn)+
n^2\nu(R>C\log\delta^{-1})\right\},
\end{equation}
where $a,b$ are fixed numbers in $(0,1)$, and $\delta\in(0,\epsilon)$ can be chosen freely. Using Lemma \ref{lem.density} together with the bound on $\nu_0(R>n)$ we have for all $\delta\in(0,1)$ the inequality
$$\nu(F_n(\epsilon))\leq 0(1)
\left\{\epsilon\delta^{-1}+n^2\exp\{-cn^{\beta_1}\}+n^2\exp\{-|\log \delta|^{\hat\beta}\}\right\},$$
for some $\beta_1<1$ (depending on $\hat\beta$ and $\beta$). Minimizing over $\delta$ we obtain:
\begin{equation}\label{eq.En-sub}
\nu(F_n(\epsilon))\leq O(1)\left\{n^2\exp\{-|\log\epsilon|^{\hat\beta}\}+\exp\{cn^{\beta_1}\}\right\}.
\end{equation}
For small values of $n$ this estimate is of little utility, and we must optimize further, as is done in
\cite[Corollary 2.4]{Collet}. Suppose $\dist(f^j(x),x)\leq\epsilon$, and let $r>1$. Then we have:
$$\dist(f^{rj}(x),x)\leq\sum_{t=0}^{r-1}\dist(f^{tj}(f^j(x)),f^{j}(x))\leq\epsilon\sum_{t=0}^{r-1}K^{tj}\leq \tilde{K}^{rj}\epsilon.$$
Here $K=\|Df\|_{\infty}$, and $\tilde{K}$ is uniformly bounded (independent of $r$ and $j$). Hence
$$F_{j}(\epsilon)\subset F_{rj}(\tilde{K}^{rj}\epsilon).$$ 
Using equation \eqref{eq.En-sub} we obtain:
\begin{equation}\label{eq.En-sub2}
\nu(F_j(\epsilon))\leq O(1)\left((rj)^2\exp\{-|\log(\epsilon \tilde{K}^{rj})|^{\hat\beta}\}+
\exp\{-c(rj)^{\beta_1}\}\right).
\end{equation}
To optimize equation \eqref{eq.En-sub2}, we put $\epsilon=1/n$, and note that $j\leq (\log n)^{\gamma}$ for some
specified $\gamma>1$. Let $a_1>0$ be fixed. We split into two cases: i) $j>a_1\log n$, and ii)
$j\leq a_1\log n$. In case i), we can just apply the bound in equation \eqref{eq.En-sub} and obtain
\begin{equation}
\nu(F_j(1/n))\leq O(1)\left((\log n)^{2}\exp\{-c_1(\log n)^{\hat\beta}\}+\exp\{c_2(\log n)^{\beta_1}\}\right)\leq
O(1)\exp\{-c(\log )^{\beta_2}\},
\end{equation}
for some uniform constants $c,c_1,c_2>0$, $\beta_2<1$. If $j\leq a_1\log n$, then choose 
$r=a_2j^{-1}\log n$ with $a_1,a_2$ fixed so that $a_1<a_2<\log\tilde{K}$. In this case, estimate \eqref{eq.En-sub2}
gives
\begin{equation}
\begin{split}
\nu(F_j(1/n)) &\leq O(1)\left((a_2\log n)^2\exp\{-c_1|\log(n \tilde{K}^{a_2\log n})|^{\hat\beta}\}+
\exp\{-c_2(a_2\log n)^{\beta_1}\}\right)\\
&\leq O(1)\left(\exp\{-c(\log n)^{\beta_3}\}\right),
\end{split}
\end{equation}
for some $\beta_3<1$, and $c>0$. The latter constant $c$ depends on the choice of $a_2$. It follows
that there exists $\alpha<1$ independent of $\gamma$ such that:
$$\nu(E_n)\leq\sum_{j=1}^{\tilde{g}(n)}\nu(F_j(1/n))\leq O(1)\left(\exp\{-(\log n)^{\alpha}\}\right).$$
\end{proof}
\paragraph{Completing the proof of Theorem \ref{thm.stretched} via the blocking argument.} We now estimate
each term that contributes to the error term $\mathcal{E}_n$ as specified in Proposition \ref{prop.blocking}.
We will take $p=q=\sqrt{n}$, and take $t=O((\log n)^{\gamma'})$ for some $1<\gamma'<\gamma$.
Since condition (H2c) holds, we know by Propositions \ref{prop.extreme2} and \ref{prop.En-sub}, that for all $\epsilon>0$
\[
\sum_{j=1}^{(\log n)^{\gamma-\epsilon}} \nu (X_1 > u_n, X_j > u_n)
=
O\left(n^{-1}\exp\{-(\log n)^{\alpha}\}\right) \quad\mbox{for all } n > n_0.
\]
To estimate $\gamma(n,t)$, we use the argument in the proof of Case 2, Proposition \ref{prop.extreme1}.
We obtain
\begin{equation}\label{eq.gamma-sub2}
\gamma(n,t)\leq \exp\{-c|\log\Theta(t)|^{\hat\beta}\},
\end{equation}
where $\hat\beta$ is given in Lemma \ref{lem.density}, and $c>0$ is a uniform constant. 
Using the fact that $\Theta(t)=O(\theta^{t^{\beta}}_0)$, and choosing $t=t_n:=(\log n)^{\gamma'}$
for some $\gamma'>1/(\beta\hat\beta)$ we obtain
\begin{equation}\label{eq.gamma-sub3}
\gamma(n,t_n)\leq \exp\{-\tilde{c}|(\log n)|^{\beta'}\},
\end{equation}
for some $\beta'>1$. Note that for this choice of $t_n$, $\gamma(n,t_n)$ decays to zero at a superpolynomial speed.
Thus this term does not significantly contribute to the error term $\mathcal{E}_n$. 
By Proposition 
\ref{prop.extreme2} we see that the main contribution to the error term comes from $\nu(E_n)$, and we obtain for
some $c>0$,
$$\mathcal{E}_n\leq \exp\{-c|(\log n)|^{\alpha}\},$$  
where $\alpha$ is the constant appearing in Proposition \ref{prop.En-sub}.
Finally, since $\nu$ is absolutely continuous with respect to Lebesgue measure
we have convergence to the Gumbel distribution along the sequence $u_n=u+\log n$.
This completes the proof.


\end{document}